\DeclareMathOperator{\dom}{dom}
\DeclareMathOperator{\range}{range}
\DeclareMathOperator{\codim}{codim}
\DeclareMathOperator{\col}{col}
\DeclareMathOperator{\row}{row}
\DeclareMathOperator{\epi}{epi}
\DeclareMathOperator{\graph}{graph}
\DeclareMathOperator{\relint}{relint}
\DeclareMathOperator{\aff}{aff}
\DeclareMathOperator*{\argmin}{argmin}
\DeclareMathOperator*{\argmax}{argmax}
\DeclareMathOperator{\Rank}{rank}
\DeclareMathOperator{\Nullity}{nullity}
\DeclareMathOperator{\conv}{conv}
\DeclareMathOperator{\nullspace}{Null}
\newcommand{\reals}{\mathbb{R}}
\newcommand{\complex}[1]{\mathcal{#1}}
\newcommand{\ntext}[1]{\,\text{\normalfont #1}}
\renewcommand{\vec}[1]{\bm{#1}}
\newcommand{\set}[1]{\mathbf{#1}}
\journalname{JOTA}
\begin{document}

\title{The Geometry and Well-Posedness of Sparse Regularized Linear Regression}

\subtitle{}

\author{Jasper M. Everink,Yiqiu Dong and Martin S. Andersen}

\institute{
Department of Applied Mathematics and Computer Science, Technical University of Denmark. Richard Petersens Plads, Building 324, DK-2800 Kgs. Lyngby, Denmark. (\href{mailto:jmev@dtu.dk}{jmev@dtu.dk}, \href{mailto:yido@dtu.dk}{yido@dtu.dk}, \href{mailto:mskan@dtu.dk}{mskan@dtu.dk})
}

\date{}

\maketitle

\begin{abstract}
In this work, we study the well-posedness of certain sparse regularized linear regression problems, i.e., the existence, uniqueness and continuity of the solution map with respect to the data. We focus on regularization functions that are convex piecewise linear, i.e., whose epigraph is polyhedral. This includes total variation on graphs and polyhedral constraints. We provide a geometric framework for these functions based on their connection to polyhedral sets and apply this to the study of the well-posedness of the corresponding sparse regularized linear regression problem. Particularly, we provide geometric conditions for well-posedness of the regression problem, compare these conditions to those for smooth regularization, and show the computational difficulty of verifying these conditions.
\end{abstract}
\keywords{Well-posedness \and Sparsity \and Linear Regression \and Polyhedral}
\subclass{52B99 \and  62J05 \and 90C31}

\section{Introduction}\label{sec:introduction}

A common goal within finite-dimensional linear inverse problems is to estimate parameters $\vec{x} \in \reals^n$ from measurements $\vec{b} \in \reals^m$. We assume  that the measurements $\vec{b}$ were obtained by observing $\vec{x}$ through some forward operator $A \in \reals^{m\times n}$ such that $\vec{b} = A\vec{x} + \vec{\epsilon}$, where $\vec{\epsilon}$ is the error in this approximation. The same has been studied in statistics, where the goal of linear regression is to find explanatory variables $\vec{x}$ that explain the measured variables $\vec{b}$ through a linear model $\vec{b} = A\vec{x} + \vec{\epsilon}$.

One approach to estimating the parameters $\vec{x}$ is by solving a regularized linear least squares problem of the form
\begin{equation}\label{eq:lls_introduction}
    \argmin_{\vec{x} \in \reals^n}\left\{\frac{1}{2}\|A\vec{x} - \vec{b}\|_2^2 + f(\vec{x})\right\},
\end{equation}
where the regularization $f:\reals^n \rightarrow \reals \cup \{\infty\}$ is chosen to penalize unwanted behavior. Common choices for the regularization functions are (generalized) Tikhonov regularization $\|L\vec{x} - \vec{c}\|_2^2$, sparsity promoting regularization like $\|L\vec{x}\|_1$ and $\sum_{i=1}^{k} \|L_i\vec{x}\|_2$, constraints $\chi_{\set{C}}(\vec{x})$, where $\chi_{\set{C}}(\vec{x})$ is zero for $\vec{x}$ in $\set{C}$ and infinite otherwise, and conical combinations of any of these.

Regularization serves partially to impose certain behavior on the solution, but also to guarantee that the problem has a solution and that this solution is unique. After all, a linear system $A\vec{x} = b$ need not have a solution or can have infinitely many solutions. We refer to a problem, e.g., an inverse problem or an optimization problem like \eqref{eq:lls_introduction}, as well-posed if the following postulates holds for all $\vec{b}$:
\begin{enumerate}
    \item There exists a solution,
    \item The solution is unique,
    \item The solution depends continuously on $\vec{b}$.
\end{enumerate}

The existence gives us at least one solution to reason about. The uniqueness guarantees that there do not exist multiple solutions that might behave differently from each other. Finally, the continuity of the solution guarantees that small perturbations of $\vec{b}$ do not dramatically alter the solution. If these conditions do not holds, we refer to a problem as being ill-posed.

Whilst inverse problems are often ill-posed, they are put into different frameworks, like the regularized linear least squares problem \eqref{eq:lls_introduction}, to make them well-posed. Verifying the well-posedness of \eqref{eq:lls_introduction} is relatively straightforward for some regularization functions, e.g., Tikhonov and affine constraints, but requires considerable more effort for others, e.g., sparsity promoting regularization. 

For many common regularization functions, existence of a solution to \eqref{eq:lls_introduction} can be easily verified using tools from convex analysis. However, certifying uniqueness is more difficult for sparsity promoting regularization. In \cite{tibshirani2013lasso,ali2019generalized}, they showed sufficient conditions for the uniqueness for regularization of the form $\|L\vec{x}\|_1$. Over the course of the three papers \cite{ewald2020distribution,schneider2022geometry,tardivel2021geometry}, necessary and sufficient conditions for uniqueness were proven if the regularization function is a support function of finite sets, i.e., $f(\vec{x}) = \sigma_{\set{V}}(\vec{x}) := \max_{\vec{v} \in \set{V}}\{\vec{v}^T\vec{x}\}$ where $\set{V} \subset \reals^n$ is finite. Examples of these functions are the generalized lasso $\|L\vec{x}\|_1$ and norms with a polyhedral unit ball, e.g.,  the Ordered Weighted $l_1$ (OWL) norm \cite{figueiredo2016ordered}.

Uniqueness of solutions for fixed data $\vec{b}$ and regularization with support functions has been studied in \cite{zhang2015necessary}. The support functions of finite sets are all convex and piecewise linear. The uniqueness for fixed data $\vec{b}$ of minimizing convex piecewise linear functions subject to affine constraints has been studied in \cite{gilbert2017solution} and for various other minimization problems in \cite{mousavi2019solution}. In \cite{everink2023sparse}, they studied the sparsity properties of the probability distribution obtained by randomizing the measurements $\vec{b}$ in \eqref{eq:lls_introduction} with convex piecewise linear regularization, assuming that the forward operator $A$ has full column rank. The uniqueness of nuclear norm minimization, for which the regularization is not piecewise linear everywhere, has been studied in \cite{hoheisel2023uniqueness}.

In this work, we generalize the necessary and sufficient conditions developed in \cite{ewald2020distribution,schneider2022geometry,tardivel2021geometry} to convex piecewise linear regularization functions. These are precisely the functions that behave affine almost everywhere. Particularly, we provide a geometric framework for studying convex piecewise linear functions and their sparsity properties when used for regularization. We show necessary and sufficient conditions for existence and uniqueness of the solutions of
\begin{equation}\label{eq:variational_introduction}
    \argmin_{\vec{x} \in \reals^n}\left\{ D(A\vec{x},\vec{b}) + f(\vec{x})\right\},
\end{equation}
where $D : \reals^m \times \reals^m \rightarrow \reals$ can be more general discrepancy function than the least squares error. For linear least squares problems regularized by convex piecewise linear functions, we furthermore show that uniqueness of the solution implies continuity, thereby providing necessary and sufficient conditions for well-posedness of this class of sparse regularized linear least squares problems. A simplified version of this result is formulated in the following theorem.
\begin{theorem}[Necessary and sufficient conditions for well-posedness]\label{thm:nec_suf_well_posedness_intro}
Let $A\in \reals^{m\times n}$ be a linear operator and $f:\reals^n \rightarrow \reals \cup \{\infty\}$ be a convex piecewise linear function, i.e., $f(\vec{x}) = \max_{i = 1, \dots k}\{\vec{v}_i^T\vec{x} + w_i\} + \chi_{\set{P}}(\vec{x})$ with $\vec{v}_i \in \reals^n$, $w_i \in \reals$, $\set{P}$ a polyhedral set and $\chi_{\set{P}}(\vec{x})$ is zero if $\vec{x} \in \set{P}$ and infinite otherwise. Furthermore, assume that $\dim(\dom(f)) = n$, where $\dom(f)$ is the effective domain of $f$, i.e., the points $\vec{x}$ in the domain for which $f(\vec{x})$ is finite. Then, the sparse regularized linear least squares problem
\begin{equation}\label{eq:linear_least_squares}
    \argmin_{\vec{x} \in \reals^n}\left\{\frac{1}{2}\|A\vec{x} - \vec{b}\|_2^2 + f(\vec{x})\right\},
\end{equation}
is well-posed if and only if $\row(A) \cap \partial f(\vec{x}) \neq \emptyset$ for $\vec{x} \in \reals^n$ implies that $\dim(\partial f(\vec{x})) \geq \Nullity(A)$, where $\row(A)$ is the row space of $A$, $\Nullity(A)$ is the dimension of the null space of $A$ and $\partial f(\vec{x})$ is the subdifferential of $f$ at $\vec{x}$.
\end{theorem}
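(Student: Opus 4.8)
The plan is to handle the three postulates of well‑posedness -- existence, uniqueness and continuity, for every $\vec{b}$ -- through a single object, the graph of the solution map
\[
\set{G} := \bigl\{(\vec{b},\vec{x}) \in \reals^m\times\reals^n : \vec{x}\ \text{minimizes}\ \tfrac12\|A\vec{x}-\vec{b}\|_2^2 + f(\vec{x})\bigr\}.
\]
By the optimality condition for \eqref{eq:linear_least_squares}, $(\vec{b},\vec{x})\in\set{G}$ iff $A^T(\vec{b}-A\vec{x})\in\partial f(\vec{x})$, equivalently $\vec{b}\in A\vec{x}+\{\vec{u}:A^T\vec{u}\in\partial f(\vec{x})\}$. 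Since $f$ is convex piecewise linear, its subdifferential graph $\{(\vec{x},\vec{g}):\vec{g}\in\partial f(\vec{x})\}$ is a finite union of polyhedra, so after this linear substitution $\set{G}$ is again a finite union of polyhedra in $\reals^m\times\reals^n$. Existence of a minimizer for every $\vec{b}$ then says exactly that the projection $\pi:\set{G}\to\reals^m$, $(\vec{b},\vec{x})\mapsto\vec{b}$, is onto, and uniqueness for every $\vec{b}$ says that $\pi$ is one‑to‑one.

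The first thing I would observe is that continuity is then automatic. Writing $\set{G}=\bigcup_j\set{P}_j$ with $\set{P}_j$ polyhedra, injectivity of $\pi$ on $\set{G}$ makes each $\pi|_{\set{P}_j}$ an affine bijection of $\set{P}_j$ onto the polyhedron $\pi(\set{P}_j)$ with affine inverse, so the single‑valued solution map agrees on $\pi(\set{P}_j)$ with an affine map; surjectivity makes the finitely many closed sets $\pi(\set{P}_j)$ cover $\reals^m$; and a map that is continuous on each piece of a finite closed cover is continuous. Hence, whenever \eqref{eq:linear_least_squares} has a unique solution for each $\vec{b}$, that solution depends piecewise‑affinely, in particular continuously, on $\vec{b}$. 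This is the step for which the quadratic data term is used, as it is what keeps $\set{G}$ polyhedral.

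It remains to translate existence and uniqueness into the stated condition. For uniqueness: if \eqref{eq:linear_least_squares} has two minimizers for some $\vec{b}$, the minimizer set is a polyhedron of positive dimension on which the residual $A\vec{x}-\vec{b}$, hence $A\vec{x}$ and $f(\vec{x})$, are constant, so it lies in a coset of $\nullspace(A)$ and contains a segment through an interior point $\vec{x}$ of it in a direction $\vec{d}\in\nullspace(A)\setminus\{\vec{0}\}$ along which $f$ is affine near $\vec{x}$; optimality at $\vec{x}$ gives $\partial f(\vec{x})\cap\row(A)\neq\emptyset$. Conversely, if $\vec{g}=A^T\vec{y}\in\partial f(\vec{x})$ and $f$ is affine near $\vec{x}$ along some $\vec{d}\in\nullspace(A)\setminus\{\vec{0}\}$, then for $\vec{b}:=A\vec{x}+\vec{y}$ the point $\vec{x}$ is a minimizer, and since $\vec{g}\in\partial f(\vec{x})$ and $t\mapsto f(\vec{x}+t\vec{d})$ is affine near $0$, the subgradient inequality propagates to $\vec{g}\in\partial f(\vec{x}+t\vec{d})$ for small $t$; as $A(\vec{x}+t\vec{d})=A\vec{x}$ these points are all minimizers, so uniqueness fails. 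Thus uniqueness for all $\vec{b}$ is equivalent to: $\partial f(\vec{x})\cap\row(A)\neq\emptyset$ implies that the linearity subspace $L_f(\vec{x}):=\{\vec{d}:t\mapsto f(\vec{x}+t\vec{d})\ \text{is affine near}\ 0\}$ meets $\nullspace(A)$ only trivially. Using that $L_f(\vec{x})$ is the orthogonal complement of the parallel subspace of the polyhedron $\partial f(\vec{x})$, so $\dim\partial f(\vec{x})+\dim L_f(\vec{x})=n$, the condition $L_f(\vec{x})\cap\nullspace(A)=\{\vec{0}\}$ forces $\dim L_f(\vec{x})\le\Rank(A)$, i.e.\ $\dim\partial f(\vec{x})\ge\Nullity(A)$; for the converse one promotes $\vec{x}$ to a relative‑interior point of $\{\vec{z}:\vec{g}\in\partial f(\vec{z})\}$ -- the polyhedron on which $f$ is affine with gradient $\vec{g}$ -- where $L_f$ attains its largest value, so that a nonzero element of $L_f\cap\nullspace(A)$ would violate $\dim\partial f(\vec{x})\ge\Nullity(A)$ there. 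Existence for every $\vec{b}$ is the remaining and comparatively soft ingredient: the objective is convex and piecewise quadratic, hence attains its infimum iff it is bounded below, iff its recession function is nonnegative; this recession function is $+\infty$ off $\nullspace(A)$ and coincides with that of $f$ on $\nullspace(A)$, so existence is a condition on $f$ along $\nullspace(A)$, which is accounted for through the general existence characterization established earlier for \eqref{eq:variational_introduction}.

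I expect the main obstacle to be the second half of the uniqueness translation: passing from ``no flat optimal direction in $\nullspace(A)$'' to the uniform dimensional inequality $\dim\partial f(\vec{x})\ge\Nullity(A)$, in a way that meshes with the existence requirement, genuinely uses how $L_f$ and $\partial f$ vary across the polyhedral strata of $f$ -- in particular the propagation $\vec{g}\in\partial f(\vec{x})\Rightarrow\vec{g}\in\partial f(\vec{x}+t\vec{d})$ along linearity directions -- and cannot be reduced to a pointwise argument. A secondary, more routine point is checking carefully that $\set{G}$ is a finite union of polyhedra and that the closed‑cover argument for continuity correctly handles the unbounded pieces.
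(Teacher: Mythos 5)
Your overall architecture matches the paper's: existence via the range condition $\row(A)\cap\partial f(\vec{x})\neq\emptyset$ (Lemma \ref{lem:range_condition}), continuity deduced from uniqueness by exhibiting the solution map as piecewise affine on a finite closed cover of $\reals^m$ and invoking the pasting lemma (your polyhedral-graph packaging is a clean equivalent of Lemma \ref{lem:uniqueness_implies_piecewise_linear} and Theorem \ref{thm:uniqueness_implies_continuity}), and non-uniqueness characterized by an accessible point carrying a linearity direction of $f$ inside $\nullspace(A)$. The easy half of the dimension translation is also fine: $L_f(\vec{x})\cap\nullspace(A)=\{\vec{0}\}$ forces $\dim L_f(\vec{x})+\Nullity(A)\le n$, hence $\dim\partial f(\vec{x})\ge\Nullity(A)$, which together with your explicit two-solution construction gives the necessity of the stated condition (the paper's Proposition \ref{prop:finding_two_solutions}).

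The genuine gap is the converse, namely that the dimension condition implies uniqueness; you flag this as the main obstacle, but your proposed resolution does not close it. Having $\dim\partial f(\vec{x})\ge\Nullity(A)$, i.e.\ $\dim L_f(\vec{x})+\dim\nullspace(A)\le n$, does not preclude $L_f(\vec{x})\cap\nullspace(A)\neq\{\vec{0}\}$: subspaces of complementary dimensions can intersect nontrivially. Promoting $\vec{x}$ to a relative-interior point of $\partial f^\star(\vec{g})$ only shrinks $\partial f$ to the smallest dual face containing $\vec{g}$ and enlarges $L_f$, and at that point the dimension bound can still hold while the flat null-space direction persists. Concretely, take $f(\vec{x})=|x_1|$ on $\reals^3$ with $\nullspace(A)=\Span(\vec{e}_2)$: at any $\vec{x}$ with $x_1=0$ one has $\partial f(\vec{x})=[-\vec{e}_1,\vec{e}_1]$, so $\dim\partial f(\vec{x})=1=\Nullity(A)$ and the condition holds there, yet $\vec{e}_2\in L_f(\vec{x})\cap\nullspace(A)$ and uniqueness fails by your own construction; the actual violation of the dimension bound occurs at a \emph{different} stratum (the points with $x_1\neq 0$, whose subdifferential is the zero-dimensional endpoint $\pm\vec{e}_1\in\row(A)$). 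Locating that other stratum is exactly what the paper's Proposition \ref{prop:two_solutions_large_codimension} does: it takes the smallest face of $\partial f(\vec{x})\cap\partial f(\vec{y})$ still meeting $\row(A)$, shows that if its codimension were at most $\Rank(A)$ then $\row(A)$ would contain an entire line in the affine hull of that face, and uses $\dim(\dom(f))=n$ (no subdifferential contains a line) to force $\row(A)$ to meet a yet smaller face, contradicting minimality. This descent argument is the key idea missing from your sketch; a pointwise dimension count at the point where the two solutions live cannot replace it.
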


Through such conditions, we discuss generalized LASSO and generalized Tikhonov regularization, i.e., penalizing with either $f(\vec{x}) = \|L\vec{x}\|_1$ or $f(\vec{x}) = \|L\vec{x}\|_2^2$. Furthermore, we discuss the combinatorial nature of the well-posedness when $A$ is almost full-rank, i.e., $\Nullity(A) = 1$, and use this to show the computational difficulty of showing well-posedness. Specifically, we show that for various sparsity promoting penalties, verifying well-posedness is a co-NP-hard problem.

This paper is organized as follows. In Section \ref{sec:convex_piecewise_linear_functions}, we discuss a geometric framework for studying convex piecewise linear functions and show that uniqueness of sparse regularized linear regression is mostly a geometry property of these functions. In Section \ref{sec:well-posedness}, we discuss the various postulates of well-posedness in relation to the geometric framework of the previous section and prove the necessary and sufficient conditions for uniqueness and well-posedness. Finally, in Section \ref{sec:implications}, we discuss various implications of the geometric conditions for uniqueness and well-posedness, including a comparison to the conditions for well-posedness with smooth Tikhonov regularization and the computational difficulty of verifying well-posedness.

\section{Convex piecewise linear functions}\label{sec:convex_piecewise_linear_functions}
From Fermat's rule, it follows that a vector $\vec{x}^\star \in \reals^n$ is a solution to the linear least squares problem \eqref{eq:linear_least_squares} if and only if
\begin{equation*}
    A^T(\vec{b} - A\vec{x}^\star) \in \partial f(\vec{x}^\star),
\end{equation*}
where $\partial f(\vec{x}^\star) := \{\vec{v} \in \reals^n\,|\, f(\vec{x}) \geq f(\vec{x}^\star) + \vec{v}^T(\vec{x} - \vec{x}^\star) \text{ for all } \vec{x} \in \reals^n\}$ denotes the subdifferential. Note that, if another vector $\vec{y}^\star \in \reals^n$ satisfies $A\vec{x}^\star = A\vec{y}^\star$ and $\partial f(\vec{x}^\star) = \partial f(\vec{y}^\star)$, then the line segment from $\vec{x}^\star$ to $\vec{y}^\star$ are all solutions to \eqref{eq:linear_least_squares}. Thus, to study the uniqueness of solutions, it can be useful to study subsets of $\reals^n$ that have the same subdifferential $\partial f$ and how these sets relate to the null space of $A$. For convex piecewise linear functions, this relation completely describes the uniqueness of solutions.

In this section, we present a geometric framework for studying convex piecewise linear functions and their subdifferentials. Specifically, in Subsection \ref{subsec:polyhedral_complexes} we explain polyhedral complexes and how they can be used to describe the geometry of convex piecewise linear functions and their subdifferentials. In Subsection \ref{subsec:duality}, we show how convex conjugacy of convex piecewise linear functions result in duality between polyhedral complexes. In Subsection \ref{subsec:TV}, we study the polyhedral complexes associated with a commonly used regularization function, Total Variation regulation on graphs. Finally, in Subsection \ref{subsec:level_sets_linear_subspaces}, the level sets of convex piecewise linear functions and their interaction with linear subspaces are studied to obtain geometric properties needed to prove uniqueness of sparse regularized linear regression, discussed in Section \ref{sec:well-posedness}.

\subsection{Polyhedral complexes}\label{subsec:polyhedral_complexes}

Recall that a set $\set{P} \subset \reals^n$ is polyhedral if there exist $\vec{v}_i \in \reals^n$ and $w_i \in \reals$ for $i = 1,\dots,k$, such that $\set{P} = \{\vec{x} \in \reals^n\,|\, \vec{v}_i^T\vec{x} \leq w_i \text{ for all } i = 1,\dots,k\}$. Thus, polyhedral sets are the intersection of finitely many halfspaces. Convex piecewise linear functions, also referred to as polyhedral functions, can be considered as the function equivalent of polyhedral sets, according to the following equivalent definitions.

\begin{definition}[convex piecewise linear function]\label{def:piecewise_linear_function}
A function $f: \reals^n \rightarrow \reals \cup \{\infty\}$ is called convex piecewise linear if any of the following equivalent conditions hold:
\begin{itemize}
    \item the epigraph $\epi(f):= \{(\vec{x}, t) \in \reals^n \times \reals \,|\, f(\vec{x}) \leq t \}$ is a polyhedral set.
    \item the domain of $f$ is a polyhedral set and it is the maximum over finitely many affine functions, i.e., $f(\vec{x}) = \max_{i = 1, \dots k}\{\vec{v}_i^T\vec{x} + w_i\} + \chi_{\set{P}}(\vec{x})$ with $\vec{v}_i \in \reals^n$, $w_i \in \reals$, $\set{P}$ a polyhedral set.
    \item $f$ is convex and the domain of $f$ can be covered by finitely many polyhedral sets, such that the restriction of $f$ to any of these polyhedral sets is an affine function.
\end{itemize}
\end{definition}

Some examples of convex piecewise linear functions are based on $l_1$-norm regularization functions of the form $\vec{x} \mapsto \|L\vec{x}-\vec{c}\|_1$ and polyhedral constraints $\vec{x} \mapsto \chi_{\set{P}}(\vec{x})$. Furthermore, the finite sum of convex piecewise linear functions is again a convex piecewise linear function. However, not all commonly used sparsity imposing regularization functions are convex piecewise linear functions, e.g., group lasso and isotropic total variation are of the form $\vec{x} \mapsto \sum_{i = 1}^{k} \|L_i\vec{x}- \vec{c}_i\|_2$, which need not be piecewise linear. For a more detailed study of convex piecewise linear functions, see \cite{rockafellar2009variational,mousavi2019solution}.

Consider the convex piecewise linear function $f(\vec{x}) = \|\vec{x}\|_1$. Let 
\begin{equation*}
    \set{I}(a) = \begin{cases}
        \{-1\} \quad &\text{if } a < 0,\\
        [-1,1] \quad &\text{if } a = 0,\\
        \{1\} \quad &\text{if } a > 0,\\
    \end{cases}
\end{equation*}
then the subdifferential of $f$ can be written as the following Cartesian product.
\begin{equation*} 
    \partial f(\vec{x}) = \prod_{i=1}^{n} \set{I}(\vec{x}_i).
\end{equation*}
Fixing the signs of the coordinates results in sets of equal subdifferential. Furthermore, the closure of these sets are all polyhedral and shown in Figure \ref{fig:l1_primal}. Furthermore, the corresponding subdifferentials are also polyhedral, because they are the Cartesian product of intervals. These subdifferentials are shown in Figure \ref{fig:l1_dual}. In Figure \ref{fig:examples_l1} and any similar further figure, circles represent sets of dimension zero, lines represent sets of dimension one, and dotted ares represent sets of dimension two. 

\newcommand{\scalefac}{0.8}

\begin{figure}
\centering
\begin{subfigure}{0.5\textwidth}
    \centering
    \begin{tikzpicture}[scale = \scalefac]
        \draw[gray] (-2,0) -- (2, 0);
        \draw[gray] (0,-2) -- (0, 2);
        
        \fill[pattern=crosshatch dots, pattern color=red] (0,0) rectangle ++(-2,-2);
        \fill[pattern=crosshatch dots, pattern color=red] (0,0) rectangle ++(2,2);
        
        \fill[pattern=crosshatch dots, pattern color=red] (0,0) rectangle ++(-2,2);
        \fill[pattern=crosshatch dots, pattern color=red] (0,0) rectangle ++(2,-2);
        
        \draw[red, line width = 0.2em] (0,0) -- (2, 0);
        \draw[red, line width = 0.2em] (0,0) -- (0, 2);
        \draw[red, line width = 0.2em] (0,0) -- (-2, 0);
        \draw[red, line width = 0.2em] (0,0) -- (0, -2);
        
        \filldraw[red] (0,0) circle (3pt);
        
    \end{tikzpicture}
    \caption{Sets of constant subdifferential of $f(\vec{x}) = \|\vec{x}\|_1$.}
    \label{fig:l1_primal}
\end{subfigure}%
\begin{subfigure}{0.5\textwidth}
\centering
    \begin{tikzpicture}[scale = \scalefac]
        \draw[gray] (-2,0) -- (2, 0);
        \draw[gray] (0,-2) -- (0, 2);
        
        \fill[pattern=crosshatch dots, pattern color=blue] (-1,-1) rectangle ++(2,2);
        
        \draw[blue, line width = 0.2em] (1,1) -- (1, -1) -- (-1, -1) -- (-1, 1) -- cycle;
        
        \filldraw[blue] (1,1) circle (3pt);
        \filldraw[blue] (1,-1) circle (3pt);
        \filldraw[blue] (-1,-1) circle (3pt);
        \filldraw[blue] (-1,1) circle (3pt);
        
    \end{tikzpicture}
    \caption{Subdifferentials of $f(\vec{x}) = \|\vec{x}\|_1$.}
    \label{fig:l1_dual}
\end{subfigure}

\hfill
\caption{The geometry of $f(\vec{x}) = \|\vec{x}\|_1$.}
\label{fig:examples_l1}
\end{figure}

Note that the family of these sets, i.e., either the set of closures of sets of equal subdifferential or the set of subdifferentials, have the additional property that any non-empty face of the included polyhedral sets is again in the family. Furthermore, the intersection of any two non-disjoint polyhedral sets is again in the family. These properties hold for all convex piecewise linear functions and this structure is referred to as a polyhedral complex, an object commonly encountered in geometry, topology and combinatorics, for example, see \cite{ziegler2012lectures}.

\begin{definition}[polyhedral complex]\label{def:polyhedral_complex}
We call a set $\complex{E} \subset \mathcal{P}(\reals^n)$, where $\mathcal{P}(\reals^n)$ denotes the power-set of $\reals^n$, a polyhedral complex if the following holds:
\begin{itemize}
    \item All $\set{E} \in \complex{E}$ are polyhedral.
    \item If $\set{F}$ is a non-empty face of $\set{E} \in \complex{E}$, then $\set{F} \in \complex{E}$.
    \item If $\set{E}, \set{F} \in \complex{E}$ have common intersection, then $\set{E} \cap \set{F}$ is a face of both $\set{E}$ and $\set{F}$.
\end{itemize}
Note that compared to the conventional definition of a polyhedral complex, we do not require the empty set to be in $\complex{E}$.
\end{definition}

For any polyhedral set $\set{P}$, the set of all faces of $\set{P}$ forms a polyhedral complex. Therefore, the polyhedral complex can be considered as a generalization of a polyhedral set. Additional examples of polyhedral complexes are given in Figure \ref{fig:examples_sparsity_subdifferential_complexes}.

\begin{figure}
\centering
\begin{subfigure}{0.5\textwidth}
    \centering
    \begin{tikzpicture}[scale = \scalefac]
        \draw[gray] (-2,0) -- (2, 0);
        \draw[gray] (0,-2) -- (0, 2);
        
        \fill[pattern=crosshatch dots, pattern color=red] (0,0) rectangle ++(2,2);
        
        \draw[red, line width = 0.2em] (0,0) -- (2, 0);
        \draw[red, line width = 0.2em] (0,0) -- (0, 2);
        \draw[red, line width = 0.2em] (0,0) -- (2, 2);
        
        \filldraw[red] (0,0) circle (3pt);
        
    \end{tikzpicture}
    \caption{$\complex{F}_{|x_1 - x_2| + \xi(\vec{x})}$}
    \label{fig:examples_sparsity_subdifferential_complexes:sparsity_NNTV}
\end{subfigure}%
\begin{subfigure}{0.5\textwidth}
    \centering
    \begin{tikzpicture}[scale = \scalefac]
        \draw[gray] (-2,0) -- (2, 0);
        \draw[gray] (0,-2) -- (0, 2);
        
        \fill[pattern=crosshatch dots, pattern color=red] (-1,-1) rectangle ++(2,2);
        \fill[pattern=crosshatch dots, pattern color=red] (1,1) -- (-1, 1) -- (-2, 2) -- (2, 2) -- cycle;
        \fill[pattern=crosshatch dots, pattern color=red] (-1,1) -- (-1, -1) -- (-2, -2) -- (-2, 2) -- cycle;
        \fill[pattern=crosshatch dots, pattern color=red] (-1,-1) -- (1, -1) -- (2, -2) -- (-2, -2) -- cycle;
        \fill[pattern=crosshatch dots, pattern color=red] (1,-1) -- (1, 1) -- (2, 2) -- (2, -2) -- cycle;
        
        \draw[red, line width = 0.2em] (1,1) -- (1, -1) -- (-1, -1) -- (-1, 1) -- cycle;
        \draw[red, line width = 0.2em] (1,1) -- (2,2);
        \draw[red, line width = 0.2em] (-1,1) -- (-2,2);
        \draw[red, line width = 0.2em] (1,-1) -- (2,-2);
        \draw[red, line width = 0.2em] (-1,-1) -- (-2,-2);
        
        \filldraw[red] (1,1) circle (3pt);
        \filldraw[red] (-1,1) circle (3pt);
        \filldraw[red] (1,-1) circle (3pt);
        \filldraw[red] (-1,-1) circle (3pt);
        
    \end{tikzpicture}
    \caption{$\complex{F}_{\max\{1,\|\vec{x}\|_{\infty}\}}$}
    \label{fig:examples_sparsity_subdifferential_complexes:sparsity_linf}
\end{subfigure}
\hfill
\begin{subfigure}{0.5\textwidth}
    \centering
    \begin{tikzpicture}[scale = \scalefac]
        \draw[gray] (-2,0) -- (2, 0);
        \draw[gray] (0,-2) -- (0, 2);
        
        \fill[pattern=crosshatch dots, pattern color=blue] (-2,1) -- (-1, 1) -- (1, -1) -- (1, -2) -- (-2, -2) -- cycle;
        
        \draw[blue, line width = 0.2em] (-2,1) -- (-1, 1) -- (1, -1) -- (1, -2);
        
        \filldraw[blue] (-1,1) circle (3pt);
        \filldraw[blue] (1,-1) circle (3pt);
        
    \end{tikzpicture}
    \caption{$\complex{F}_{|x_1 - x_2| + \xi(\vec{x})}^\star$}
    \label{fig:examples_sparsity_subdifferential_complexes:subdifferential_NNTV}
\end{subfigure}%
\begin{subfigure}{0.5\textwidth}
\centering
    \begin{tikzpicture}[scale = \scalefac]
        \draw[gray] (-2,0) -- (2, 0);
        \draw[gray] (0,-2) -- (0, 2);
        
        \fill[pattern=crosshatch dots, pattern color=blue] (0,0) -- (2, 0) -- (0, 2)  -- cycle;
        \fill[pattern=crosshatch dots, pattern color=blue] (0,0) -- (0, 2) -- (-2, 0)  -- cycle;
        \fill[pattern=crosshatch dots, pattern color=blue] (0,0) -- (-2, 0) -- (0, -2)  -- cycle;
        \fill[pattern=crosshatch dots, pattern color=blue] (0,0) -- (0, -2) -- (2, 0)  -- cycle;
        
        \draw[blue, line width = 0.2em] (2,0) -- (0, 2) -- (-2, 0) -- (0, -2) -- cycle;
        \draw[blue, line width = 0.2em] (2,0) -- (-2,0);
        \draw[blue, line width = 0.2em] (0,2) -- (0,-2);
        
        \filldraw[blue] (0,0) circle (3pt);
        \filldraw[blue] (2,0) circle (3pt);
        \filldraw[blue] (0,2) circle (3pt);
        \filldraw[blue] (-2,0) circle (3pt);
        \filldraw[blue] (0,-2) circle (3pt);
        
    \end{tikzpicture}
    \caption{$\complex{F}_{\max\{1,\|\vec{x}\|_{\infty}\}}^\star$}
    \label{fig:examples_sparsity_subdifferential_complexes:subdifferential_linf}
\end{subfigure}
\hfill
\caption{Examples of primal complexes and their corresponding dual complexes. For clearity, function $\xi(\vec{x})$ denotes the characterstic function of the nonnegative orthant $\chi_{\reals^n_{\geq 0}}(\vec{x}).$}
\label{fig:examples_sparsity_subdifferential_complexes}
\end{figure}
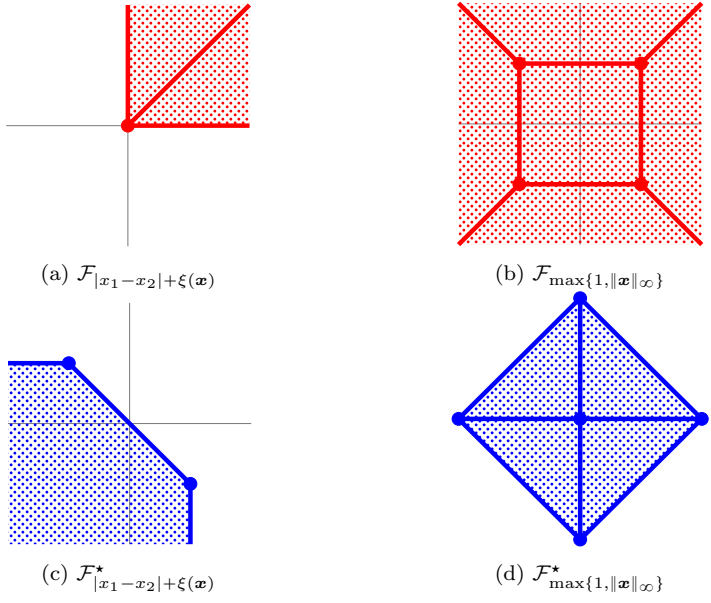

Instead of defining a polyhedral complex through the closure of sets of constant subdifferential, we can define the same polyhedral complex by projecting the polyhedral epigraph of a convex piecewise linear function onto the domain in a manner that preserves the polyhedral complex created by the faces of the epigraph.

Similarly, one of the definitions given for convex piecewise linear functions is that the domain can be covered by finitely many polyhedral sets, such that the restriction of $f$ to each of these polyhedral sets is an affine function. The smallest polyhedral complex describing such covering can be constructed as in the following definition.

\begin{definition}[primal complex]\label{def:domain_complex}
Let $f : \reals^{n} \rightarrow \reals \cup \{\infty\}$ be convex piecewise linear. Define the lower faces of the epigraph of $f$ as
\begin{equation*}
    \bar{\complex{F}}_f := \{\set{F}\ \ntext{is a face of}\ \epi(f)\,|\, \set{F} \neq \emptyset\ \ntext{and}\ \dim(\set{F}) \neq n + 1\}.
\end{equation*}

Let $P : \reals^{n+1} \rightarrow \reals^n$ be the linear projection map $P(\vec{x}, \vec{t}) = \vec{x}$ and define the primal complex of $f$ as
\begin{equation*}
    \complex{F}_f := P\bar{\complex{F}}_f = \{P\bar{\set{F}}\,|\, \bar{\set{F}} \in \bar{\complex{F}}_f\}.
\end{equation*}
\end{definition}

Intuitively, one can consider $\complex{F}_f$ as projecting $\epi(f)$ or $\graph(f)$ back onto the domain, but keeping the structure of the faces of $\epi(f)$, as illustrated in Figure \ref{fig:construction_regular_subdivision}. Note that Figure \ref{fig:construction_regular_subdivision:2D} corresponds to the complex shown in \ref{fig:examples_sparsity_subdifferential_complexes:sparsity_linf}.

\input{tikz_figures/fig_regular_subdivision}

Though often referred to as a regular subdivision of $\dom(f)$, see for example \cite{ziegler2012lectures}, we use the term primal complex due to duality properties described in Subsection \ref{subsec:duality}. 

Constructing the primal complex as a projection of a polyhedral set will allow us to derive various properties through the properties of the polyhedral epigraph. The following proposition states that the primal complex corresponds to the closure of sets of equal subdifferential.

\begin{proposition}[constant subdifferential]\label{prop:constant_subdifferential}
Let $f : \reals^{n} \rightarrow \reals \cup \{\infty\}$ be convex piecewise linear. Each polyhedral set $\set{F} \in \complex{F}_f$ can be characterized as follows. Let $\vec{x} \in \relint(\set{F})$ be arbitrary, then the relative interior $\relint(\set{F})$ consists precisely of all $\vec{y}\in \reals^n$ for which $\partial f(\vec{x}) = \partial f(\vec{y})$.

The constant subdifferential corresponding to $\relint(\set{F})$ will be denoted by $\partial f_{\set{F}}$.
\end{proposition}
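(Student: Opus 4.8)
The plan is to lift the statement to the epigraph and read it off from the normal fan of the polyhedron $\epi(f)$. For $\vec{x}\in\dom(f)$ write $\hat{\vec{x}}:=(\vec{x},f(\vec{x}))\in\graph(f)$. First I would record the preliminary fact that the faces in $\bar{\complex{F}}_f$ are exactly the faces of $\epi(f)$ contained in $\graph(f)$, equivalently the faces not containing the upward direction $(\vec{0},1)$; this follows because if the relative interior of a face of $\epi(f)$ meets $\graph(f)$, convexity of $f$ forces the whole face into $\graph(f)$. On such a face $P$ is injective, so $P$ restricts to an affine, relative-interior-preserving bijection from $\bar{\complex{F}}_f$ onto $\complex{F}_f$.

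Next I would establish the link between subdifferentials and normal cones: directly from the definitions, $\vec{v}\in\partial f(\vec{x})$ if and only if $(\vec{v},-1)\in N_{\epi(f)}(\hat{\vec{x}})$, where $N_{\set{Q}}(\cdot)$ denotes the normal cone of a polyhedron $\set{Q}$. Since $(\vec{0},1)$ is a recession direction of $\epi(f)$, every vector in $N_{\epi(f)}(\hat{\vec{x}})$ has nonpositive last coordinate, and a short computation shows $N_{\epi(f)}(\hat{\vec{x}})$ is the union of the conical hull of $\partial f(\vec{x})\times\{-1\}$ with $N_{\dom(f)}(\vec{x})\times\{0\}$, the latter being the recession cone of $\partial f(\vec{x})$. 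Hence $\partial f(\vec{x})$ and $N_{\epi(f)}(\hat{\vec{x}})$ determine one another, and in particular $\partial f(\vec{x})=\partial f(\vec{y})$ if and only if $N_{\epi(f)}(\hat{\vec{x}})=N_{\epi(f)}(\hat{\vec{y}})$.

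Then I would invoke the standard description of the normal fan of a polyhedron: $\vec{p}\mapsto N_{\epi(f)}(\vec{p})$ is constant on the relative interior of each face, takes distinct values on distinct relative interiors, and these relative interiors partition $\epi(f)$. Combined with the previous step, $\partial f(\vec{x})=\partial f(\vec{y})$ if and only if $\hat{\vec{x}}$ and $\hat{\vec{y}}$ lie in the relative interior of one common face $\bar{\set{F}}$ of $\epi(f)$; by the preliminary fact such a face lies in $\bar{\complex{F}}_f$, and $P$ maps it onto $\set{F}:=P\bar{\set{F}}\in\complex{F}_f$ with $P(\relint(\bar{\set{F}}))=\relint(\set{F})$. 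Putting these together, the partition of $\dom(f)=P(\graph(f))$ into classes of equal subdifferential coincides with $\{\relint(\set{F}) : \set{F}\in\complex{F}_f\}$, and the class of $\vec{x}$ is the relative interior of the unique $\set{F}\in\complex{F}_f$ having $\vec{x}$ in its relative interior; specialized to a fixed $\set{F}\in\complex{F}_f$ and an arbitrary $\vec{x}\in\relint(\set{F})$, this is exactly the assertion, and it identifies $\partial f_{\set{F}}$ as the common subdifferential value.

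The main obstacle is the face-to-face dictionary underlying the last step: one must verify carefully that $\bar{\complex{F}}_f$ consists precisely of the faces of $\epi(f)$ contained in $\graph(f)$, since $P$ collapses any face containing an upward vertical ray and such faces would otherwise yield spurious elements of $\complex{F}_f$ on which the statement fails. A secondary, bookkeeping point is the normal-cone identity, where the recession-cone term must be tracked; it is nonzero exactly when $\partial f(\vec{x})$ is unbounded, i.e. when $\vec{x}\in\relbd(\dom(f))$.
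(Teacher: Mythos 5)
Your argument is correct and is essentially the paper's own proof, which compresses the same normal-fan-of-the-epigraph reasoning into two sentences; you have merely supplied the details (the relative-interior-preserving bijection $P$ between lower faces and $\complex{F}_f$, the dictionary $\vec{v}\in\partial f(\vec{x})\Leftrightarrow(\vec{v},-1)\in \set{N}_{\epi(f)}((\vec{x},f(\vec{x})))$, and the normal-fan partition into relative interiors of faces). The ``obstacle'' you flag is real but is an imprecision in the paper's definition of $\bar{\complex{F}}_f$ rather than a flaw in your argument: when $\dom(f)\subsetneq\reals^n$ the condition $\dim(\set{F})\neq n+1$ also admits vertical faces sitting over proper faces of $\dom(f)$, so $\bar{\complex{F}}_f$ must be read as the set of lower (non-vertical) faces, as the paper's own wording intends, for your face dictionary and indeed for the proposition itself to hold.
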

\begin{proof}
Consider the polyhedral set $\epi(f)$. The interior of each face of $\epi(f)$ is characterized by its normal cone. Furthermore, the normal cone of a lower face of $\epi(f)$ characterizes the subdifferential of $f$ at the interior of the face projected back onto coordinate space.
\hfill$\square$\end{proof}

Because these sets will describe the low-dimensional sets of interest, an alternative name used in the literature is pattern, see for example \cite{hejny2024unveiling}.

The polyhedral complex corresponding to the subdifferentials is easier to define.
\begin{definition}[dual complex]\label{def:subdifferential_complex}
Let $f : \reals^{n} \rightarrow \reals \cup \{\infty\}$ be convex piecewise linear. Define the corresponding dual complex as 
\begin{equation*}
    \complex{F}_f^\star := \{\partial f_\set{F}\,|\, \set{F} \in \complex{F}_f\} = \{\partial f(\vec{x})\,|\, \vec{x} \in \reals^n\}.
\end{equation*}
\end{definition}

Examples of primal and dual complexes can be found in Figures \ref{fig:examples_l1} and \ref{fig:examples_sparsity_subdifferential_complexes}.

Thus we can associate a pair of polyhedral complexes $(\complex{F}_f, \complex{F}_f^\star)$ and a bijection $\partial f : \complex{F}_f \rightarrow \complex{F}_f^\star$ between the primal complex and the dual complex with any convex piecewise linear function $f$. The next subsection is dedicated to the relation between the primal complex $\complex{F}_f$ and the dual complex $\complex{F}_f^\star$.

\subsection{Duality}\label{subsec:duality}

If $\set{P}$ is a polyhedral set with face $\set{F}$ then the normal cone $\set{N}_\set{P}(\vec{x})$ at any point $\vec{x}$ in the relative interior of $\set{F}$ is the same, which we can write simply as $\set{N}_\set{P}(\relint(\set{F}))$. For two faces $\set{F}$ and $\set{H}$ such that $\set{F}$ is a face of $\set{H}$, their corresponding normal cones $\set{N}_\set{P}(\relint(\set{F}))$ and $\set{N}_\set{P}(\relint(\set{H}))$ have the reverse relation, that $\set{N}_\set{P}(\relint(\set{H}))$ is a face of $\set{N}_\set{P}(\relint(\set{F}))$. This property directly extends to the pair $(\complex{F}_f, \complex{F}_f^\star)$ by slicing the normal cones of the polyhedral epigraph.

\begin{lemma}\label{lemma:dual_inclusion}
Let $f : \reals^{n} \rightarrow \reals \cup \{\infty\}$ be convex piecewise linear. Let $\set{F}, \set{H} \in \complex{F}_f$ with corresponding $\partial f_{\set{F}}, \partial f_{\set{H}} \in \complex{F}_f^\star$, then $\set{F}$ is a face of $\set{H}$ if and only if $\partial f_\set{H}$ is a face of $\partial f_{\set{F}}$. 
\end{lemma}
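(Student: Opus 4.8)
The plan is to reduce the statement about the pair $(\complex{F}_f, \complex{F}_f^\star)$ to the classical order-reversing duality between faces of a polyhedral set and faces of its normal fan, applied to the epigraph $\epi(f)$. Recall that $\complex{F}_f = P\bar{\complex{F}}_f$, where $\bar{\complex{F}}_f$ consists of the nonempty lower faces of $\epi(f)$, and that by the proof of Proposition~\ref{prop:constant_subdifferential} the subdifferential $\partial f_{\set{F}}$ is obtained by slicing the normal cone of the corresponding lower face $\bar{\set{F}}$ of $\epi(f)$ at height-direction normalized appropriately (concretely, $\partial f_{\set{F}} = \{\vec{v} \in \reals^n : (\vec{v},-1) \in \set{N}_{\epi(f)}(\relint(\bar{\set{F}}))\}$). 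So I would first record this correspondence precisely: the projection $P$ restricted to lower faces is a bijection $\bar{\complex{F}}_f \to \complex{F}_f$ preserving the face relation (a lower face $\bar{\set{F}}$ is a face of $\bar{\set{H}}$ iff $P\bar{\set{F}}$ is a face of $P\bar{\set{H}}$), and the map $\bar{\set{F}} \mapsto \partial f_{\bar{\set{F}}}$ is the slice-at-level-$-1$ of the normal cone map.

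Next I would invoke the standard fact from polyhedral geometry: for a polyhedral set $\set{Q}$ and faces $\bar{\set{F}}, \bar{\set{H}}$, $\bar{\set{F}}$ is a face of $\bar{\set{H}}$ if and only if $\set{N}_{\set{Q}}(\relint(\bar{\set{H}}))$ is a face of $\set{N}_{\set{Q}}(\relint(\bar{\set{F}}))$ — i.e. the normal fan is the order-reversal of the face lattice (see \cite{ziegler2012lectures,rockafellar2009variational}). Applying this to $\set{Q} = \epi(f)$ gives the order-reversal at the level of the full normal cones. It then remains to check that slicing with the hyperplane $\{(\vec{v}, t) : t = -1\}$ preserves the face relation between these cones, i.e. that $\set{N}_{\epi(f)}(\relint(\bar{\set{H}}))$ is a face of $\set{N}_{\epi(f)}(\relint(\bar{\set{F}}))$ if and only if the slice $\partial f_{\set{H}}$ is a face of the slice $\partial f_{\set{F}}$. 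This holds because for both lower faces the normal cone meets the open halfspace $\{t < 0\}$ (as $\dim \bar{\set{F}}, \dim\bar{\set{H}} \neq n+1$, the faces are genuinely "lower" and have an outward normal with negative last coordinate), so the slice is a nonempty affine section of each cone that is faithful to the face structure; intersecting a face relation of cones with a common supporting affine slice that meets the relative interior of the smaller cone again yields a face relation.

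The main obstacle I expect is the bookkeeping in this last slicing step: one must be careful that $\partial f_{\set{F}}$ is the slice at $t=-1$ of the normal cone \emph{of the lower face}, that this slice is nonempty (which is exactly where the condition $\dim(\dom f) = n$, or rather just that $\set{F}$ is a genuine lower face, enters — guaranteeing the normal cone is not contained in the hyperplane $\{t=0\}$), and that taking a nonempty section of a cone by an affine hyperplane transverse to it is a lattice isomorphism onto the face lattice of the section. None of these are deep, but assembling them cleanly — especially handling the relative-interior conditions so the "if and only if" goes through in both directions — is the delicate part. Everything else is a direct transcription of the classical normal-fan duality through the projection $P$.
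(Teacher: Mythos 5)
Your proposal is correct and takes essentially the same route as the paper: the paper offers no formal proof, only the paragraph preceding the lemma, which observes that the order-reversing correspondence between faces of a polyhedral set and their normal cones ``directly extends to the pair $(\complex{F}_f, \complex{F}_f^\star)$ by slicing the normal cones of the polyhedral epigraph'' --- exactly the argument you spell out. Your version is in fact more careful than the paper's one-line justification about the transversality of the slice at $t=-1$ and the bijection between lower faces and their projections.
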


The primal complex $\complex{F}_f$ and dual complex $\complex{F}_f^\star$ are actually dual through conjugacy of the convex piecewise linear function $f$. Recall that the convex conjugate of a convex function $f$ is another convex function defined by $f^\star(\vec{y}) := \sup_{\vec{x}\in\reals^n}\{\vec{y}^T\vec{x} - f(\vec{x})\}$. Furthermore, the convex conjugate of a convex piecewise linear function is again convex piecewise linear. The following proposition is both useful for deriving properties and for computing either complexes.

\begin{proposition}[Duality of complexes]\label{prop:duality_of_complexes}
Let $f : \reals^{n} \rightarrow \reals \cup \{\infty\}$ be convex piecewise linear. Then,
\begin{equation*}
    \left(\complex{F}_{f^\star}, \complex{F}_{f^\star}^\star\right) = \left(\complex{F}_f^\star, \complex{F}_f\right).
\end{equation*}
\end{proposition}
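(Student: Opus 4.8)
The plan is to establish the identity $(\complex{F}_{f^\star},\complex{F}_{f^\star}^\star)=(\complex{F}_f^\star,\complex{F}_f)$ by relating the faces of $\epi(f)$ to the faces of $\epi(f^\star)$ through the normal-cone correspondence, thereby reducing the statement to a well-known polarity fact about polyhedral sets. Concretely, I would first recall that the subdifferential map and the conjugate are tied together by the biconjugation identity: for a closed proper convex $f$, one has $\vec{y}\in\partial f(\vec{x})$ if and only if $\vec{x}\in\partial f^\star(\vec{y})$ (equivalently, $\vec{x}^T\vec{y}=f(\vec{x})+f^\star(\vec{y})$). Since $f$ is convex piecewise linear, so is $f^\star$, so both primal and dual complexes are well-defined for each.

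The first main step is to show $\complex{F}_{f^\star}^\star=\complex{F}_f$, i.e. that the set of subdifferentials $\{\partial f^\star(\vec{y})\,|\,\vec{y}\in\reals^n\}$ coincides with the primal complex $\complex{F}_f$. Using Definition~\ref{def:subdifferential_complex} applied to $f^\star$, $\complex{F}_{f^\star}^\star=\{\partial f^\star(\vec{y})\,|\,\vec{y}\in\reals^n\}$. I would argue that for $\vec{y}$ in the relative interior of a cell $\partial f_\set{F}\in\complex{F}_f^\star$ (with $\set{F}\in\complex{F}_f$), the set $\partial f^\star(\vec{y})$ equals the closure of the set of all $\vec{x}$ with $\partial f(\vec{x})\ni\vec{y}$, which by Proposition~\ref{prop:constant_subdifferential} and Lemma~\ref{lemma:dual_inclusion} is exactly $\set{F}$: indeed $\vec{x}\in\partial f^\star(\vec{y})\iff\vec{y}\in\partial f(\vec{x})$, and $\vec{y}\in\relint(\partial f_\set{F})$ forces $\partial f(\vec{x})\supseteq\partial f_\set{F}$ with $\partial f(\vec{x})=\partial f_\set{H}$ for the unique $\set{H}$ with $\set{F}$ a face of $\set{H}$; since $\vec{y}$ is interior to $\partial f_\set{F}$ the only possibility giving a subgradient relation for all such $\vec{x}$ is $\set{H}=\set{F}$, so $\partial f^\star(\vec{y})=\set{F}$. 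This shows $\complex{F}_{f^\star}^\star\supseteq\complex{F}_f$; the reverse inclusion follows symmetrically, and because $f$ is closed proper convex, $f^{\star\star}=f$, giving the other half $\complex{F}_{f^\star}=\complex{F}_f^\star$ by applying the just-proved identity with $f$ replaced by $f^\star$.

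An alternative and perhaps cleaner route for the same two steps is geometric: the lower faces of $\epi(f)$ and the lower faces of $\epi(f^\star)$ are in inclusion-reversing bijection via the Legendre-type correspondence on polyhedra (a face of $\epi(f)$ on which $f$ has the affine form $\vec{x}\mapsto\vec{v}^T\vec{x}+w$ corresponds to the face of $\epi(f^\star)$ supported in the direction $(\vec{v},-1)$, namely $\{\vec{v}\}\times[\text{something}]$-type vertices and their combinations), and projecting these faces down recovers precisely $\complex{F}_f^\star$ from $\complex{F}_{f^\star}$ and vice versa. I would verify this on the epigraph level: the normal cone to a lower face $\set{F}$ of $\epi(f)$, intersected with the hyperplane $\{t=-1\}$ and projected, is the cell $\partial f_\set{F}$; dually, that same cell arises as the projection of a face of $\epi(f^\star)$, and the two constructions match because the normal fan of $\epi(f)$ and the face poset of $\epi(f^\star)$ are anti-isomorphic.

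The main obstacle I anticipate is bookkeeping at the level of relative interiors and dimensions: one must check that the cell structure is preserved exactly — not just set-theoretically but as polyhedral complexes with the correct faces — and in particular handle the cells coming from $\dom(f)$ not being full-dimensional, the recession/unboundedness directions (which on the conjugate side become the cells containing the origin or the cells "at infinity"), and the exclusion of the $(n{+}1)$-dimensional face in Definition~\ref{def:domain_complex}. Making the bijection $\partial f:\complex{F}_f\to\complex{F}_f^\star$ literally equal to the bijection $\partial f^\star:\complex{F}_{f^\star}\to\complex{F}_{f^\star}^\star$ read backwards is where the argument needs care, but Lemma~\ref{lemma:dual_inclusion} supplies exactly the inclusion-reversal needed to close it.
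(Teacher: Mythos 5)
Your proposal takes essentially the same route as the paper's proof: reduce to the single identity $\complex{F}_{f^\star}^\star=\complex{F}_f$ via biconjugation ($f^{\star\star}=f$ for closed proper convex $f$), then use the Fenchel--Young equivalence $\vec{y}\in\partial f(\vec{x})\Leftrightarrow\vec{x}\in\partial f^\star(\vec{y})$ to identify $\partial f^\star(\vec{y})$ with $\set{F}$ for $\vec{y}$ in (the relative interior of) $\partial f_{\set{F}}$. One local slip worth fixing: for $\vec{y}\in\relint(\partial f_{\set{F}})$, the cells $\set{H}$ with $\vec{y}\in\partial f_{\set{H}}$ are precisely the \emph{faces of} $\set{F}$ (Lemma~\ref{lemma:dual_inclusion} reverses the face relation), so $\partial f^\star(\vec{y})=\bigcup\{\relint(\set{H})\,|\,\set{H}\ \text{a face of}\ \set{F}\}=\set{F}$; your text states this correspondence the other way round (``$\set{F}$ a face of $\set{H}$''), though the conclusion you draw is the correct one.
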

\begin{proof}
Let $g = f^\star$, then $g^\star = f$ by $f$ being closed and the Fenchel-Moreau theorem \cite[Corollary 13.38]{bauschke2011convex}. Therefore, $\complex{F}_f^\star = \complex{F}_{f^\star}$ is equivalent to $\complex{F}_{g^\star}^\star = \complex{F}_g$. Thus, we only need to prove $\complex{F}_{f^\star}^\star = \complex{F}_{f}$ for any convex piecewise linear function $f$.

A fundamental property of proper, closed, convex functions is the equivalence of the Fenchel-Young inequality \cite[Proposition 16.10]{bauschke2011convex}, i.e., if $\vec{x} \in \dom(f)$ and $\vec{y} \in \dom(f^\star)$, then
\begin{equation*}
    \vec{y} \in \partial f(\vec{x}) \ntext{ if and only if }\vec{x} \in \partial f^\star(\vec{y}).
\end{equation*}

Let $\set{F} \in \complex{F}_f$ and $\partial f_{\set{F}} \in \complex{F}_f^\star$. We need to show that for all $\vec{y} \in \partial f_{\set{F}}$, we have $\partial f^\star(\vec{y}) = \set{F}$.

Let $\vec{y} \in \partial f_{\set{F}}$ and $\vec{x} \in \relint(\set{F})$, then $\vec{y} \in \partial f_{\set{F}} = \partial f(\vec{x})$, thus $\vec{x} \in \partial f^\star(\vec{y})$, hence $\relint(\set{F}) \subseteq \partial f^\star(\vec{y})$. Because the subdifferential is a closed set we get $\set{F} \subseteq \partial f^\star(\vec{y})$.

Let $\vec{x} \in \set{F}$ but $\vec{x} \not\in \partial f^\star(\vec{y})$. Then $\vec{y} \not\in \partial f(\vec{x})$. But by assumption $\vec{y} \in \partial f_{\set{F}} \subseteq \partial f(\vec{x})$, which is a contradiction. 
\hfill$\square$\end{proof}

An example of this conjugacy is as follows. Let $\set{V} \subset \reals^n$ be a finite set of points and define the support function $\sigma_{\set{V}}(\vec{x}) := \sup_{\vec{v}\in \set{V}}\{\vec{v}^T\vec{x}\}$. Then $\chi_{\conv(\set{V})}^\star = \sigma_{\set{V}}$. Therefore, $\complex{F}_{\chi_{\conv(\set{V})}}^\star$ corresponds to the polyhedral set $\conv(\set{V})$ with all of its faces. The case $\|\vec{x}\|_1$ corresponds to $\set{V} = \{(1,1), (-1,1), (1,-1), (-1,-1)\}$, as shown in Figure \ref{fig:examples_l1}.

Some useful duality properties are given in the following lemma.

\begin{lemma}\label{lemma:properties_duality_of_complexes_2}
Let $f : \reals^{n} \rightarrow \reals \cup \{\infty\}$ be convex piecewise linear. Let $\set{F} \in \complex{F}_f$ with corresponding $\partial f_{\set{F}} \in \complex{F}_f^\star$, then the following holds:
\begin{enumerate}[label=\roman*]
    \item $\dim(\set{F}) + \dim(\partial f_{\set{F}}) = n$ .\label{lemma:duality_dimensions}
    \item $\partial f_{\set{F}} \bot \set{F}$, i.e., $(\partial f_{\set{F}} - \partial f_{\set{F}})^T(\set{F} - \set{F}) = \{\vec{0}\}$. \label{lemma:duality_orthogonal}
    \item $\set{F} + \partial f_{\set{F}}^\bot \in \aff(\set{F})$. \label{lemma:duality_orthogonal_2}
\end{enumerate}
\end{lemma}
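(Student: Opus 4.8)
The plan is to read all three claims off the polyhedral epigraph $\epi(f)\subseteq\reals^{n+1}$ together with the conjugate duality from Proposition~\ref{prop:duality_of_complexes}. Fix $\set{F}\in\complex{F}_f$, pick $\vec{x}_0\in\relint(\set{F})$ and $\vec{y}_0\in\relint(\partial f_{\set{F}})$, and recall that, as shown in the proof of Proposition~\ref{prop:duality_of_complexes}, $\partial f^\star(\vec{y})=\set{F}$ for every $\vec{y}\in\partial f_{\set{F}}$. I would prove \ref{lemma:duality_orthogonal} first, because the normal form for $f$ obtained along the way is exactly what is needed for \ref{lemma:duality_dimensions}; statement \ref{lemma:duality_orthogonal_2} is then a short linear-algebra consequence of the other two.

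The first step is a normal form for $f$ on $\set{F}$. By the Fenchel--Young equality \cite[Proposition 16.10]{bauschke2011convex}, for $\vec{y}\in\partial f_{\set{F}}$ and $\vec{x}\in\set{F}=\partial f^\star(\vec{y})$ one has $f(\vec{x})+f^\star(\vec{y})=\vec{y}^T\vec{x}$; in particular $f^\star(\vec{y})$ is finite, and $f$ coincides on $\set{F}$ with the affine function $\vec{x}\mapsto\vec{y}^T\vec{x}-f^\star(\vec{y})$, for \emph{every} $\vec{y}\in\partial f_{\set{F}}$. Equating the representations coming from $\vec{y}_1,\vec{y}_2\in\partial f_{\set{F}}$ shows that $(\vec{y}_1-\vec{y}_2)^T\vec{x}=f^\star(\vec{y}_1)-f^\star(\vec{y}_2)$ is independent of $\vec{x}\in\set{F}$, hence $(\vec{y}_1-\vec{y}_2)^T(\vec{x}-\vec{x}')=0$ for all $\vec{x},\vec{x}'\in\set{F}$, which is \ref{lemma:duality_orthogonal}.

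The main work is \ref{lemma:duality_dimensions}. Using the normal form, $\set{G}:=\{(\vec{x},f(\vec{x}))\mid\vec{x}\in\set{F}\}$ is precisely the face of $\epi(f)$ exposed by the linear functional $(\vec{y}_0,-1)$: its supremum over $\epi(f)$ equals $f^\star(\vec{y}_0)$ and is attained exactly where $\vec{y}_0\in\partial f(\vec{x})$, i.e. on $\{(\vec{x},f(\vec{x}))\mid\vec{x}\in\partial f^\star(\vec{y}_0)=\set{F}\}$. Being the graph of an affine function over $\set{F}$, $\set{G}$ maps isomorphically onto $\set{F}$ under the projection $P$, so $\dim(\set{G})=\dim(\set{F})$ and $(\vec{x}_0,f(\vec{x}_0))\in\relint(\set{G})$. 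I would then invoke the standard identity $\dim(\set{G})+\dim(\set{N}_{\epi(f)}(\relint(\set{G})))=n+1$ valid for a face of a polyhedron in $\reals^{n+1}$, and compute the normal cone $\set{D}:=\set{N}_{\epi(f)}(\vec{x}_0,f(\vec{x}_0))$. A direct computation from the definitions gives $(\vec{y},-1)\in\set{D}\iff\vec{y}\in\partial f(\vec{x}_0)=\partial f_{\set{F}}$, and $\set{D}$ contains no vector with positive last coordinate. Since the closed convex cone $\set{D}$ meets $\{(\vec{a},s)\mid s=-1\}$ and its relative interior is stable under positive scaling, slicing $\set{D}$ by $\{s=-1\}$ loses exactly one dimension, so $\dim(\partial f_{\set{F}})=\dim(\set{D}\cap\{s=-1\})=\dim(\set{D})-1$. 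Combining the two displays yields $\dim(\set{F})+\dim(\partial f_{\set{F}})+1=n+1$.

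Finally, \ref{lemma:duality_orthogonal_2} follows by linear algebra. Put $W:=\Span(\set{F}-\set{F})$ and $V:=\Span(\partial f_{\set{F}}-\partial f_{\set{F}})$; statement \ref{lemma:duality_orthogonal} gives $W\subseteq V^\perp=\partial f_{\set{F}}^\perp$, while \ref{lemma:duality_dimensions} gives $\dim(W)=\dim(\set{F})=n-\dim(\partial f_{\set{F}})=\dim(V^\perp)$, so $W=\partial f_{\set{F}}^\perp$; since $\set{F}+W=\aff(\set{F})$ for any nonempty convex set, this gives $\set{F}+\partial f_{\set{F}}^\perp=\aff(\set{F})$. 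The one genuinely non-formal ingredient in the whole argument is the dimension count for $\set{D}$ in \ref{lemma:duality_dimensions} --- everything else is either the conjugate duality already in hand or elementary --- where one must use that $\epi(f)$ is ``graph-like'' (not vertical) at $(\vec{x}_0,f(\vec{x}_0))$, equivalently that $P$ is injective on the exposed face $\set{G}$, to ensure that the slice by $\{s=-1\}$ drops dimension by exactly one.
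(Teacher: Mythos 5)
Your proof is correct and follows essentially the same route as the paper, which disposes of (i) with the one-line remark that it ``follows from the normal cone construction,'' proves (ii) by the same Fenchel--Young cancellation $\vec{y}^T\vec{x}=f^\star(\vec{y})+f(\vec{x})$, and derives (iii) from (i) and (ii). You have simply filled in the details the paper leaves implicit --- in particular the exposed-face/normal-cone dimension count and the slicing at $s=-1$ for (i), and the linear-algebra step $\Span(\set{F}-\set{F})=(\partial f_{\set{F}}-\partial f_{\set{F}})^\perp$ for (iii) --- all of which are sound.
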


\begin{proof}
(\ref{lemma:duality_dimensions}) Follows from the normal cone construction.

(\ref{lemma:duality_orthogonal}) Let $\vec{x}, \vec{x}' \in \set{F}$ and $\vec{y}, \vec{y}' \in \partial f_{\set{F}}$, then
\begin{equation*}
    (\vec{y} - \vec{y}')^T(\vec{x} - \vec{x}') = \vec{y}^T\vec{x} + \vec{y}'^T\vec{x}' - \vec{y}'^T\vec{x} - \vec{y}^T\vec{x}'. 
\end{equation*}
By conjugacy, $\vec{y}^T\vec{x} = f^\star(\vec{y}) + f(\vec{x})$ and similarly for the other terms. All thee terms will cancel out resulting in
\begin{equation*}
    (\vec{y} - \vec{y}')^T(\vec{x} - \vec{x}') = \vec{0}.
\end{equation*}
    
(\ref{lemma:duality_orthogonal_2}) Follows from properties \ref{lemma:duality_dimensions} and \ref{lemma:duality_orthogonal}.

\hfill$\square$\end{proof}

The various properties from Lemma \ref{lemma:properties_duality_of_complexes_2} are visualized in Figure \ref{fig:examples_duality_complexes}.

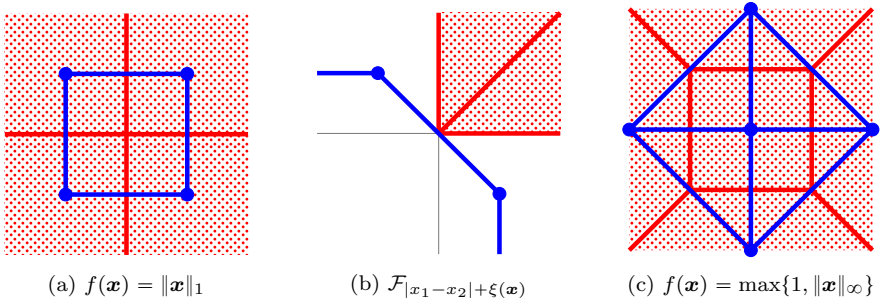
\begin{figure}
\centering
\begin{subfigure}{0.3\textwidth}
    \centering
    \begin{tikzpicture}[scale = \scalefac]
        \draw[gray] (-2,0) -- (2, 0);
        \draw[gray] (0,-2) -- (0, 2);
        
        \fill[pattern=crosshatch dots, pattern color=red] (0,0) rectangle ++(-2,-2);
        \fill[pattern=crosshatch dots, pattern color=red] (0,0) rectangle ++(2,2);
        
        \fill[pattern=crosshatch dots, pattern color=red] (0,0) rectangle ++(-2,2);
        \fill[pattern=crosshatch dots, pattern color=red] (0,0) rectangle ++(2,-2);
        
        \draw[red, line width = 0.2em] (0,0) -- (2, 0);
        \draw[red, line width = 0.2em] (0,0) -- (0, 2);
        \draw[red, line width = 0.2em] (0,0) -- (-2, 0);
        \draw[red, line width = 0.2em] (0,0) -- (0, -2);

        \draw[blue, line width = 0.2em] (1,1) -- (1, -1) -- (-1, -1) -- (-1, 1) -- cycle;
        
        \filldraw[blue] (1,1) circle (3pt);
        \filldraw[blue] (1,-1) circle (3pt);
        \filldraw[blue] (-1,-1) circle (3pt);
        \filldraw[blue] (-1,1) circle (3pt);
        
    \end{tikzpicture}
    \caption{$f(\vec{x}) = \|\vec{x}\|_1$}
    \label{fig:examples_duality:sparsity_l1}
\end{subfigure}
\hfill
\begin{subfigure}{0.3\textwidth}
    \centering
    \begin{tikzpicture}[scale = \scalefac]
        \draw[gray] (-2,0) -- (2, 0);
        \draw[gray] (0,-2) -- (0, 2);
        
        \fill[pattern=crosshatch dots, pattern color=red] (0,0) rectangle ++(2,2);
        
        \draw[red, line width = 0.2em] (0,0) -- (2, 0);
        \draw[red, line width = 0.2em] (0,0) -- (0, 2);
        \draw[red, line width = 0.2em] (0,0) -- (2, 2);

        \draw[blue, line width = 0.2em] (-2,1) -- (-1, 1) -- (1, -1) -- (1, -2);
        
        \filldraw[blue] (-1,1) circle (3pt);
        \filldraw[blue] (1,-1) circle (3pt);

    \end{tikzpicture}
    \caption{$\complex{F}_{|x_1 - x_2| + \xi(\vec{x})}$}
    \label{fig:examples_duality:sparsity_NNTV}
\end{subfigure}
\hfill
\begin{subfigure}{0.3\textwidth}
    \centering
    \begin{tikzpicture}[scale = \scalefac]
        \draw[gray] (-2,0) -- (2, 0);
        \draw[gray] (0,-2) -- (0, 2);
        
        \fill[pattern=crosshatch dots, pattern color=red] (-1,-1) rectangle ++(2,2);
        \fill[pattern=crosshatch dots, pattern color=red] (1,1) -- (-1, 1) -- (-2, 2) -- (2, 2) -- cycle;
        \fill[pattern=crosshatch dots, pattern color=red] (-1,1) -- (-1, -1) -- (-2, -2) -- (-2, 2) -- cycle;
        \fill[pattern=crosshatch dots, pattern color=red] (-1,-1) -- (1, -1) -- (2, -2) -- (-2, -2) -- cycle;
        \fill[pattern=crosshatch dots, pattern color=red] (1,-1) -- (1, 1) -- (2, 2) -- (2, -2) -- cycle;
        
        \draw[red, line width = 0.2em] (1,1) -- (1, -1) -- (-1, -1) -- (-1, 1) -- cycle;
        \draw[red, line width = 0.2em] (1,1) -- (2,2);
        \draw[red, line width = 0.2em] (-1,1) -- (-2,2);
        \draw[red, line width = 0.2em] (1,-1) -- (2,-2);
        \draw[red, line width = 0.2em] (-1,-1) -- (-2,-2);

        \draw[blue, line width = 0.2em] (2,0) -- (0, 2) -- (-2, 0) -- (0, -2) -- cycle;
        \draw[blue, line width = 0.2em] (2,0) -- (-2,0);
        \draw[blue, line width = 0.2em] (0,2) -- (0,-2);
        
        \filldraw[blue] (0,0) circle (3pt);
        \filldraw[blue] (2,0) circle (3pt);
        \filldraw[blue] (0,2) circle (3pt);
        \filldraw[blue] (-2,0) circle (3pt);
        \filldraw[blue] (0,-2) circle (3pt);
        
    \end{tikzpicture}
    \caption{$f(\vec{x}) = \max\{1,\|\vec{x}\|_{\infty}\}$}
    \label{fig:examples_duality:sparsity_linf}
\end{subfigure}
\hfill
\caption{Examples of the duality between parts of the primal complexes (red) and their corresponding dual complexes (blue).}
\label{fig:examples_duality_complexes}
\end{figure}

\subsection{Total Variation regularization on graphs}\label{subsec:TV}

A popular choice for regularization is total variation, which is used to penalize jumps and thereby promote piecewise constant signals, see for example \cite{condat2017discrete,kolmogorov2016total}. We will study some properties of vertices of the dual complex of total variation regularization on graphs, which we will need in Section \ref{sec:implications}.

Let $G = (\set{V}, \set{E})$ be an undirected graph with nodes $\set{V} = \{v_i\}_{i = 1}^{\#\set{V}}$ and links $\set{E} \subset \{\{v_i, v_j\}\,|\, v_i, v_j \in \set{V}, v_i \neq v_j\}$. We consider a parameter vector $\vec{x} \in \mathbb{R}^{\#\set{V}}$ which assigns a value to each node in the graph $G$. For each link $\{v_i, v_j\} \in \set{E}$, we will penalize the difference of the values at the endpoints with the penalty $|x_i - x_j|$. Summing over all of these penalties results in the total variation (TV) regularization, written as,
\begin{equation*} \label{eq:TV_on_graph}
    \text{TV}_{G}(\vec{x}) := \sum_{\{v_i, v_j\} \in \set{E}}|x_i - x_j|.
\end{equation*}
Alternatively, denote by $\vec{d}_{i,j} \in \mathbb{R}^{\#\set{V}}$ the vector that is $1$ at the $i$-th index, $-1$ at the $j$-th index, and $0$ otherwise. Then we can equivalently write the TV regularization as,
\begin{equation} \label{eq:TV_on_graph_l1}
    \text{TV}_{G}(\vec{x}) := \sum_{\{v_i, v_j\} \in \set{E}}|\vec{d}_{i,j}^T\vec{x}| = \|D\vec{x}\|_1,
\end{equation}
where the rows of $D \in \mathbb{R}^{\#\set{E}\times\#\set{V}}$ are the vectors $\vec{d}_{i,j}^T$ for all $\{v_i, v_j\} \in \set{E}$.

A single term $|\vec{d}_{i,j}^T\vec{x}|$ is non-differentiable if and only if $x_i = x_j$, thus non-differentiabillity corresponds to vectors $\vec{x}$ such that some neighbouring nodes are assigned the same value. If $x_i \neq x_j$, then $|\vec{d}_{i,j}^T\vec{x}|$ is differentiable with gradient $\vec{d}_{i, j}$ if the value is increasing from node $i$ to $j$, i.e., $x_i < x_j$, and $-\vec{d}_{i,j}$ if the value is decreasing from node $i$ to $j$, i.e., $x_i > x_j$. Thus, for a single term $|\vec{d}_{i,j}^T\vec{x}|$, the dual complex is the polytope with vertices $\vec{d}_{i,j}$ and $-\vec{d}_{i,j}$. Because the total variation regularization \eqref{eq:TV_on_graph_l1} is the sum over these terms, the dual complex is the polytope obtained by taking the Minkowski sum over all these smaller polytopes. Thus, the dual complex of $\text{TV}_G$ is the polytope
\begin{equation} \label{eq:TV_polytope}
    \set{P}_{\text{TV}_G} = \conv\left(\left\{\sum_{\{v_i, v_j\} \in \set{E}} u_{i,j}\vec{d}_{i,j}\,|\, \vec{u} \in \{-1,1\}^{\#\set{E}}\right\}\right),
\end{equation}
with all of its faces. The vector $\vec{u}$ in \eqref{eq:TV_polytope} can be interpreted as assigning directions to the links in the graph $G$, with the product $u_{i,j}\vec{d}_{i,j}$ being $1$ if the link is directed from $j$ to $i$ and $-1$ if the link is directed in the other direction. The total sum $\sum_{\{v_i, v_j\} \in \set{E}}u_{i,j}\vec{d}_{i,j}$ can then be interpreted as the differences between the in-degree and out-degree at each node. However, not all possible direction assignments $\vec{u}$ result in vertices of the polytope $\set{P}_{\text{TV}_G}$. The following proposition shows that only those direction assignments $\vec{u}$ such that the associated directed graph is acyclic describe vertices.

\begin{proposition}\label{prop:acyclic_vertex}
    The vertices of the polytope $\set{P}_{\text{TV}_G}$ are the vectors $\vec{z}_{\vec{u}} = \sum_{\{v_i, v_j\} \in \set{E}} u_{i,j}\vec{d}_{i,j}$ with $\vec{u} \in \{-1,1\}^{\#\set{E}}$ such that the associated directed graph is acyclic.
\end{proposition}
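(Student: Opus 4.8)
The plan is to use that $\set{P}_{\text{TV}_G}$ is a zonotope: by \eqref{eq:TV_polytope} it is the Minkowski sum of the segments $\set{S}_{i,j} := [-\vec{d}_{i,j},\vec{d}_{i,j}]$, equivalently the image of the cube $[-1,1]^{\#\set{E}}$ under $\vec{t}\mapsto\sum_{\{v_i,v_j\}\in\set{E}} t_{i,j}\vec{d}_{i,j}$, and it is the convex hull of the finitely many points $\vec{z}_{\vec{u}}$, $\vec{u}\in\{-1,1\}^{\#\set{E}}$. Hence every vertex is some $\vec{z}_{\vec{u}}$, and it remains to decide for which $\vec{u}$ this point is a vertex. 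First I would record how a linear functional $\vec{c}\in\reals^{\#\set{V}}$ behaves: since $\vec{c}^T\vec{d}_{i,j}=c_i-c_j$ and support functions are additive over Minkowski sums, $\max_{\vec{z}\in\set{P}_{\text{TV}_G}}\vec{c}^T\vec{z}=\sum_{\{v_i,v_j\}\in\set{E}}|c_i-c_j|$, the maximizer over $\set{S}_{i,j}$ being the single endpoint $\operatorname{sgn}(c_i-c_j)\vec{d}_{i,j}$ when $c_i\neq c_j$ and the whole segment otherwise. Thus $\vec{c}^T\vec{z}$ has a unique maximizer over $\set{P}_{\text{TV}_G}$, equal to $\vec{z}_{\vec{u}}$ with $u_{i,j}=\operatorname{sgn}(c_i-c_j)$, exactly when $c_i\neq c_j$ for every edge.

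Second I would prove ``acyclic $\Rightarrow$ vertex''. If the directed graph $G_{\vec{u}}$ associated with $\vec{u}$ is acyclic, pick a topological ordering of its nodes and set $c_i$ to be the position of $v_i$ in it. Since $u_{i,j}=1$ means the arc of $G_{\vec{u}}$ on $\{v_i,v_j\}$ runs from $v_j$ to $v_i$, the topological order gives $c_j<c_i$ whenever $u_{i,j}=1$ and $c_i<c_j$ whenever $u_{i,j}=-1$, i.e., $\operatorname{sgn}(c_i-c_j)=u_{i,j}$ on every edge, with all inequalities strict, so $c_i\neq c_j$ on every edge. By the previous paragraph $\vec{z}_{\vec{u}}$ is then the unique maximizer of $\vec{c}^T\cdot$ over $\set{P}_{\text{TV}_G}$; a point that uniquely maximizes a linear functional over a convex set is an extreme point of that set, hence a vertex of the polytope.

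The main obstacle is the converse, ``$G_{\vec{u}}$ has a directed cycle $\Rightarrow\vec{z}_{\vec{u}}$ is not a vertex'', and the subtlety is that one cannot argue by perturbing $\vec{u}$ inside $[-1,1]^{\#\set{E}}$: $\vec{u}$ is a corner of the cube, and the kernel direction of $\vec{t}\mapsto\sum t_{i,j}\vec{d}_{i,j}$ supplied by a cycle cannot be added to $\vec{u}$ in both senses while remaining in the cube. Instead I would use single-arc reversals. Let $e_1,\dots,e_k$ be the arcs of a directed cycle in $G_{\vec{u}}$; summing the ``head vertex minus tail vertex'' vectors $u_{e_t}\vec{d}_{e_t}$ around the cycle telescopes to $\vec{0}$ (this is precisely the in-degree $=$ out-degree statement for cycle nodes noted before the proposition). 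Let $\vec{u}^{(t)}$ be $\vec{u}$ with only the arc $e_t$ reversed; then $\vec{u}^{(t)}\in\{-1,1\}^{\#\set{E}}$, so $\vec{z}_{\vec{u}^{(t)}}=\vec{z}_{\vec{u}}-2u_{e_t}\vec{d}_{e_t}\in\set{P}_{\text{TV}_G}$, and $\tfrac1k\sum_{t=1}^k\vec{z}_{\vec{u}^{(t)}}=\vec{z}_{\vec{u}}-\tfrac2k\sum_{t=1}^k u_{e_t}\vec{d}_{e_t}=\vec{z}_{\vec{u}}$. Since $\vec{d}_{e_1}\neq\vec{0}$, the $\vec{z}_{\vec{u}^{(t)}}$ are not all equal to $\vec{z}_{\vec{u}}$, so $\vec{z}_{\vec{u}}$ is a convex combination with strictly positive weights of points of $\set{P}_{\text{TV}_G}$ not all equal to it; hence it is not extreme and not a vertex. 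Putting these together yields the proposition; beyond these steps I expect only the orientation bookkeeping in the second paragraph and checking the telescoping identity to need care.
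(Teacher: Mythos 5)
Your proof is correct, and it takes a genuinely different route from the paper's. The paper works through the dual-complex machinery: a point of $\set{P}_{\text{TV}_G}$ is a vertex exactly when it is a zero-dimensional cell of $\complex{F}_{\text{TV}_G}^\star$, i.e., when $\text{TV}_G$ is differentiable at some $\vec{x}$ with gradient $\vec{z}_{\vec{u}}$; acyclicity then enters because differentiability forces $x_i\neq x_j$ with signs prescribed by $\vec{u}$, which is realizable (by the same inductive node potential you encode in $\vec{c}$) precisely when there is no directed cycle. You instead stay entirely inside polyhedral geometry: for the ``acyclic $\Rightarrow$ vertex'' direction your linear functional built from a topological order is really the same node potential as the paper's $\vec{x}$, translated from the subdifferential language into the support-function language, so that direction is the same idea in different clothing. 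The real divergence is in ``cyclic $\Rightarrow$ not a vertex'': the paper argues by contradiction that a strictly increasing potential cannot exist around a directed cycle, whereas you exhibit $\vec{z}_{\vec{u}}$ explicitly as the average of the $k$ points obtained by reversing one arc of the cycle at a time, using the telescoping identity $\sum_t u_{e_t}\vec{d}_{e_t}=\vec{0}$. Your version is self-contained (it does not invoke Proposition~\ref{prop:duality_of_complexes} or the identification of polytope vertices with differentiability points) and gives a constructive witness of non-extremality; the paper's version is shorter given the duality framework and connects vertices directly to differentiability of $\text{TV}_G$, which is the form in which the proposition is used later. Your orientation bookkeeping ($u_{i,j}=1$ meaning the arc runs from $v_j$ to $v_i$) matches the paper's in-degree-minus-out-degree convention, and the remaining steps (every vertex is some $\vec{z}_{\vec{u}}$ since $\set{P}_{\text{TV}_G}$ is the convex hull of these points; uniqueness of the maximizer over a Minkowski sum of segments) are sound.
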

\begin{proof}

    Let $\vec{z}_{\vec{u}}$ such that the associated directed graph is cyclic. Because $\vec{z}_{\vec{u}} \in \set{P}_{\text{TV}_G}$, there must exists $\vec{x}$ such that $\vec{z}_{\vec{u}} \in \partial \text{TV}_G (\vec{x})$, more specifically, the values in $\vec{x}$ must strictly increase over any directed link in the directed graph. Because the directed graph is cyclic, this results in a contradiction, hence the directed graph associated to $\vec{z}_{\vec{u}}$ is acyclic.

    Let $\vec{z}_{\vec{u}}$ such that the associated directed graph is acyclic. Construct $\vec{x} \in \mathbb{R}^{\#\set{V}}$ inductively as follows. For each node $i$ with no in-going links, let $x_i = 0$. For all other nodes $j$, let $x_j$ be the one larger than the maximum over the $x_i$ from incoming nodes. By construction, $\text{TV}_G$ is differentiable at $\vec{x}$ with gradient $\vec{z}_{\vec{u}}$, hence $\vec{z}_{\vec{u}}$ is a vertex of the polytope $\set{P}_{\text{TV}_G}$.
\hfill$\square$\end{proof}

\begin{corollary}
    An undirected graph $G$ is a tree, i.e., the graph does not have cycles, if and only if, the vertices of $\set{P}_{\text{TV}_G}$ are all the vectors of the form $\sum_{\{v_i, v_j\} \in \set{E}}u_{i,j}\vec{d}_{i,j}$ with $\vec{u} \in \{-1,1\}^{\#\set{E}}$.
\end{corollary}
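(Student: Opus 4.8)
The plan is to derive the corollary from Proposition \ref{prop:acyclic_vertex} together with the elementary fact that an undirected graph has no cycles if and only if every orientation of it is acyclic. Recall first that the vertices of $\set{P}_{\text{TV}_G}$ are always contained in the finite set $\{\vec{z}_{\vec{u}} : \vec{u}\in\{-1,1\}^{\#\set{E}}\}$, since $\set{P}_{\text{TV}_G}$ is the convex hull of that set; hence the statement to be proved is exactly that \emph{all} of these $\vec{z}_{\vec{u}}$ are vertices if and only if $G$ is acyclic. For the ``if'' direction: if $G$ has no cycles, then no orientation of $G$ can contain a directed cycle, since the underlying undirected edges of such a directed cycle would form a cycle in $G$; thus every $\vec{u}$ induces an acyclic orientation, and Proposition \ref{prop:acyclic_vertex} yields that every $\vec{z}_{\vec{u}}$ is a vertex.

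For the ``only if'' direction I would argue by contraposition: assume $G$ contains a cycle, choose an orientation $\vec{u}$ whose restriction to that cycle is a directed cycle $v_1 \to v_2 \to \dots \to v_k \to v_1$ on edges $e_1,\dots,e_k$ (the remaining edges oriented arbitrarily), and show $\vec{z}_{\vec{u}}$ is not a vertex. Let $\vec{b}_i \in \reals^{\#\set{V}}$ be the contribution of $e_i$ to $\vec{z}_{\vec{u}}$, i.e.\ the vector with entry $+1$ at the head of $e_i$ and $-1$ at its tail; in particular $\vec{b}_i \neq \vec{0}$, and telescoping around the cycle gives $\sum_{i=1}^{k}\vec{b}_i = \vec{0}$. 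Let $\vec{u}^{(1)}$ be $\vec{u}$ with the orientation of $e_1$ reversed, and $\vec{u}^{(2)}$ be $\vec{u}$ with the orientations of $e_2,\dots,e_k$ reversed; both lie in $\{-1,1\}^{\#\set{E}}$. Since reversing an edge negates its contribution and $\sum_{i\geq 2}\vec{b}_i = -\vec{b}_1$, a short computation gives $\vec{z}_{\vec{u}^{(1)}} = \vec{z}_{\vec{u}} - 2\vec{b}_1$ and $\vec{z}_{\vec{u}^{(2)}} = \vec{z}_{\vec{u}} + 2\vec{b}_1$. Therefore $\vec{z}_{\vec{u}} = \tfrac12\bigl(\vec{z}_{\vec{u}^{(1)}} + \vec{z}_{\vec{u}^{(2)}}\bigr)$ is the midpoint of two distinct points of $\set{P}_{\text{TV}_G}$, so it is not an extreme point and hence not a vertex; since $\vec{z}_{\vec{u}}$ still belongs to $\{\vec{z}_{\vec{u}'} : \vec{u}'\in\{-1,1\}^{\#\set{E}}\}$, not all of these vectors are vertices, which is what contraposition required.

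The main obstacle is exactly this ``only if'' step. Proposition \ref{prop:acyclic_vertex} identifies the vertex set with $\{\vec{z}_{\vec{u}} : \vec{u}\ \text{acyclic}\}$, but by itself it does not preclude a cyclic orientation $\vec{u}$ from producing the \emph{same} point $\vec{z}_{\vec{u}}$ as some acyclic orientation, which would still be a vertex; so one genuinely has to exhibit an explicit non-extreme point among the $\vec{z}_{\vec{u}}$ when $G$ has a cycle, as the midpoint decomposition above does. The only mildly technical part is the bookkeeping with the $\vec{b}_i$, but it is robust: it uses only that each $\vec{b}_i$ is nonzero, that they sum to zero around the cycle, and that reversing an edge negates its contribution, none of which depends on the precise indexing conventions for the vectors $\vec{d}_{i,j}$.
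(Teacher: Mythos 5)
Your proof is correct. The paper offers no written proof of this corollary --- it is presented as an immediate consequence of Proposition \ref{prop:acyclic_vertex} --- so the comparison is really between your argument and the implicit one. The ``if'' direction is the same in both: an orientation of an acyclic graph cannot contain a directed cycle, so every $\vec{u}$ is acyclic and every $\vec{z}_{\vec{u}}$ is a vertex by the proposition. Where you go beyond the paper is the ``only if'' direction, and you are right to do so: Proposition \ref{prop:acyclic_vertex} identifies the vertex set with $\{\vec{z}_{\vec{u}} : \vec{u}\ \text{acyclic}\}$, but this does not by itself rule out that a cyclic orientation produces the \emph{same} point as some acyclic one, in which case all the $\vec{z}_{\vec{u}}$ could still be vertices even when $G$ has a cycle. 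Your midpoint decomposition $\vec{z}_{\vec{u}} = \tfrac12\bigl(\vec{z}_{\vec{u}^{(1)}} + \vec{z}_{\vec{u}^{(2)}}\bigr)$, obtained by reversing one edge of the cycle versus reversing the other $k-1$, exhibits an explicit non-extreme point of $\set{P}_{\text{TV}_G}$ among the $\vec{z}_{\vec{u}}$ and closes this gap cleanly; the computation (each $\vec{b}_i\neq\vec{0}$, $\sum_i\vec{b}_i=\vec{0}$ by telescoping, reversal negates a contribution) is sound. The only cosmetic remark is that the corollary's wording ``tree'' really means ``forest'' (acyclic, not necessarily connected), which you handle consistently with the paper's own gloss.
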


In many applications, negative components in the parameters $\vec{x}$ make no physical sense, and nonnegativity constraints are added. When nonnegativity constraints are added to total variation regularization, i.e.,
\begin{equation*}
    \text{NN-TV}_{G}(\vec{x}) := \text{TV}_G(\vec{x}) + \chi_{\mathbb{R}_{\geq 0}^{\#\set{V}}}(x),
\end{equation*}
then its dual complex is described by the Minkowski sum $\set{P}_{\text{TV}_G} - \mathbb{R}_{\geq 0}^{\#\set{V}}$, the vertices of which are the same as $\set{P}_{\text{TV}_G}$, as shown in the following proposition.

\begin{proposition}\label{prop:acyclic_vertex_NN}
    The vertices of $\set{P}_{\text{TV}_G} - \mathbb{R}_{\geq 0}^{\#\set{V}}$ and $\set{P}_{\text{TV}_G}$ are the same.
\end{proposition}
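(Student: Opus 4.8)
The plan is to prove the two inclusions between the vertex sets separately, exploiting that $Q := \set{P}_{\text{TV}_G} - \mathbb{R}_{\geq 0}^{\#\set{V}}$ is the Minkowski sum of the polytope $\set{P}_{\text{TV}_G}$ with the pointed polyhedral cone $-\mathbb{R}_{\geq 0}^{\#\set{V}}$, whose apex is the origin.

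For ``every vertex of $Q$ is a vertex of $\set{P}_{\text{TV}_G}$'', I would first show that any vertex $\vec{v}$ of $Q$ in fact lies in $\set{P}_{\text{TV}_G}$. Write $\vec{v} = \vec{p} - \vec{s}$ with $\vec{p} \in \set{P}_{\text{TV}_G}$ and $\vec{s} \geq \vec{0}$. If some coordinate $s_i$ were positive, then $\vec{v} + \epsilon\vec{e}_i$ and $\vec{v} - \epsilon\vec{e}_i$ would both lie in $Q$ for all small $\epsilon > 0$ (only the sign pattern of the subtracted vector changes, and it stays in $\mathbb{R}_{\geq 0}^{\#\set{V}}$), while their average is $\vec{v}$; this contradicts $\vec{v}$ being an extreme point of $Q$. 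Hence $\vec{s} = \vec{0}$ and $\vec{v} \in \set{P}_{\text{TV}_G}$. Since an extreme point of $Q$ that belongs to the subset $\set{P}_{\text{TV}_G} \subseteq Q$ is automatically an extreme point of $\set{P}_{\text{TV}_G}$, i.e.\ a vertex, this inclusion follows.

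For the reverse inclusion I would use two structural facts about $\set{P}_{\text{TV}_G}$ that are immediate from \eqref{eq:TV_polytope}: its support function is $\sigma_{\set{P}_{\text{TV}_G}} = \text{TV}_G$ (a sum of support functions of the segments $\conv\{\pm\vec{d}_{i,j}\}$ whose Minkowski sum is $\set{P}_{\text{TV}_G}$), and $\vec{1}^T\vec{y} = 0$ for every $\vec{y} \in \set{P}_{\text{TV}_G}$ because each $\vec{d}_{i,j}^T\vec{1} = 0$. Let $\vec{z}$ be a vertex of $\set{P}_{\text{TV}_G}$, uniquely exposed by some $\vec{x}$, i.e.\ $\vec{x}^T\vec{z} > \vec{x}^T\vec{y}$ for all $\vec{y}\in\set{P}_{\text{TV}_G}\setminus\{\vec{z}\}$ (equivalently $\partial\text{TV}_G(\vec{x}) = \{\vec{z}\}$, as in Propositions \ref{prop:constant_subdifferential} and \ref{prop:acyclic_vertex}). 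Because $\vec{1}^T\vec{y}$ is constant on $\set{P}_{\text{TV}_G}$, the functional $\vec{x} + t\vec{1}$ exposes the same vertex for every $t$, so after replacing $\vec{x}$ by $\vec{x} + t\vec{1}$ with $t$ large we may assume $\vec{x} > \vec{0}$ componentwise. Then for any $\vec{w} = \vec{p} - \vec{s} \in Q$ we get $\vec{x}^T\vec{w} = \vec{x}^T\vec{p} - \vec{x}^T\vec{s} \leq \vec{x}^T\vec{z}$, with equality only if $\vec{p} = \vec{z}$ and $\vec{x}^T\vec{s} = 0$, the latter forcing $\vec{s} = \vec{0}$ since $\vec{x} > \vec{0}$; hence $\vec{w} = \vec{z}$. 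So $\vec{x}$ attains its maximum over $Q$ uniquely at $\vec{z} \in Q$, making $\vec{z}$ a vertex of $Q$.

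The only delicate point I anticipate is in this last step: a merely nonnegative exposing functional would leave $\vec{z}$ in the relative interior of a positive-dimensional face of $Q$ rather than at a vertex, so one genuinely needs a strictly positive exposing functional. This is precisely what the translation invariance $\vec{d}_{i,j}^T\vec{1} = 0$ provides, and it is the same phenomenon already present in the proof of Proposition \ref{prop:acyclic_vertex}, where the constructed witness $\vec{x}$ has nonnegative entries and can be shifted by a multiple of $\vec{1}$ to become strictly positive. Everything else is routine convex geometry.
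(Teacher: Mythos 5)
Your proof is correct, and both directions are sound; the only inessential flourish is the remark about the support function of $\set{P}_{\text{TV}_G}$, which you never actually use (all you need is that every vertex of a polytope is exposed). The route differs from the paper's in its mechanics, though both hinge on the same structural fact that $\vec{1}^T\vec{y}=0$ on $\set{P}_{\text{TV}_G}$ while $\vec{1}^T\vec{s}>0$ for nonzero $\vec{s}\in\mathbb{R}_{\geq 0}^{\#\set{V}}$. For the easy inclusion the paper simply cites the general fact that vertices of a Minkowski sum of a polytope and a polyhedral cone are vertices of the polytope, whereas you prove it from scratch by the coordinate perturbation $\vec{v}\pm\epsilon\vec{e}_i$; for the harder inclusion the paper argues by contradiction via a decomposition of a non-vertex point into a convex combination of vertices plus a recession direction, while you construct an explicit strictly positive exposing functional $\vec{x}+t\vec{1}$ for the vertex over the larger set $Q$. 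Your version is more self-contained and makes explicit the delicate point (a merely nonnegative exposing functional would not suffice) that the paper's terser decomposition argument glosses over; the paper's version is shorter because it leans on standard polyhedral facts. Either way the key lemma is the invariance of $\vec{1}^T\vec{x}$ on the TV polytope, which both proofs share.
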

\begin{proof}
    If $\vec{v}$ be a vertex of $\set{P}_{\text{TV}_G}$ that is not a vertex of $\set{P}_{\text{TV}_G} - \mathbb{R}_{\geq 0}^{\#\set{V}}$, then $\vec{v}$ can be written as a convex combination of the other vertices of $\set{P}_{\text{TV}_G}$ plus a non-zero vector $\vec{p}$ from the cone $-\mathbb{R}_{\geq 0}^{\#\set{V}}$. However, every vector $\vec{x} \in \set{P}_{\text{TV}_G}$ satisfies $\vec{1}^T\vec{x} = 0$, but $\vec{1}^T\vec{v} = \vec{1}^T\vec{p} < 0$, a contradiction. Thus every vertex of $\set{P}_{\text{TV}_G}$ is a vertex of $\set{P}_{\text{TV}_G} - \mathbb{R}_{\geq 0}^{\#\set{V}}$. 
    
    For the converse, note that  vertices of the Minkowski sum of a polytope and a polyhedral cone are all vertices of the polytope.
\hfill$\square$\end{proof}

\subsection{Level sets and linear subspaces} \label{subsec:level_sets_linear_subspaces}

The orthogonality properties from Lemma \ref{lemma:properties_duality_of_complexes_2} are fundamental in the discussion of the uniqueness of the solution of the sparse regularized linear regression problem \eqref{eq:variational_introduction}. Note particularly that if $\set{F} \in \complex{F}_f$ does not consist of a single point and $\nullspace(A) \cap \partial f_{\set{F}}^\bot \neq \emptyset$, then $\relint(\set{F})$ contains an infinite subset $\set{L}$ such that $A\vec{x} = A\vec{y}$ for all $\vec{x}, \vec{y} \in \set{L}$. Thus, either $\set{L}$ does not contain the solution of the optimization problem, or all the points in $\set{L}$ are solutions. Note that, if they are all solutions, then $\set{L}$ must lie in a level set of $f$. Each level set of a convex piecewise linear function can be given the structure of a polyhedral complex.

\begin{definition}[level set complex]\label{def:level_set_complex}
Let $f: \reals^{n} \rightarrow \reals \cup \{\infty\}$ be convex piecewise linear. For every $c \in \reals$, define the level set $\complex{L}_f(c)$ as the polyhedral complex corresponding to the level set $\{\vec{x}\in \reals^n\,|\, f(\vec{x}) = c\}$.
\end{definition}

\begin{figure}
\centering
\begin{subfigure}{0.3\textwidth}
    \centering
    \begin{tikzpicture}[scale = \scalefac]
        \draw[gray] (-2,0) -- (2, 0);
        \draw[gray] (0,-2) -- (0, 2);
        
        \fill[pattern=crosshatch dots, pattern color=red] (0,0) rectangle ++(-2,-2);
        \fill[pattern=crosshatch dots, pattern color=red] (0,0) rectangle ++(2,2);
        
        \fill[pattern=crosshatch dots, pattern color=red] (0,0) rectangle ++(-2,2);
        \fill[pattern=crosshatch dots, pattern color=red] (0,0) rectangle ++(2,-2);
        
        \draw[red, line width = 0.2em] (0,0) -- (2, 0);
        \draw[red, line width = 0.2em] (0,0) -- (0, 2);
        \draw[red, line width = 0.2em] (0,0) -- (-2, 0);
        \draw[red, line width = 0.2em] (0,0) -- (0, -2);
        
        \filldraw[red] (0,0) circle (3pt);
        
        \draw[green, line width = 0.2em] (1,0) -- (0, -1) -- (-1, 0) -- (0, 1) -- cycle;
        
        \filldraw[green] (1,0) circle (3pt);
        \filldraw[green] (-1,0) circle (3pt);
        \filldraw[green] (0,1) circle (3pt);
        \filldraw[green] (0,-1) circle (3pt);
        
    \end{tikzpicture}
    \caption{$\complex{F}_{\|\vec{x}\|_1}$}
    \label{fig:examples_level_sets:sparsity_l1}
\end{subfigure}
\hfill
\begin{subfigure}{0.3\textwidth}
    \centering
    \begin{tikzpicture}[scale = \scalefac]
        \draw[gray] (-2,0) -- (2, 0);
        \draw[gray] (0,-2) -- (0, 2);
        
        \fill[pattern=crosshatch dots, pattern color=red] (0,0) rectangle ++(2,2);
        
        \draw[red, line width = 0.2em] (0,0) -- (2, 0);
        \draw[red, line width = 0.2em] (0,0) -- (0, 2);
        \draw[red, line width = 0.2em] (0,0) -- (2, 2);
        
        \filldraw[red] (0,0) circle (3pt);
        
        \draw[green, line width = 0.2em] (0,0.5) -- (1.5, 2);
        \draw[green, line width = 0.2em] (0.5,0) -- (2, 1.5);
        
        \filldraw[green] (0,0.5) circle (3pt);
        \filldraw[green] (0.5,0) circle (3pt);

    \end{tikzpicture}
    \caption{$\complex{F}_{|x_1 - x_2| + \xi(\vec{x})}$}
    \label{fig:examples_level_sets:sparsity_NNTV}
\end{subfigure}
\hfill
\begin{subfigure}{0.3\textwidth}
    \centering
    \begin{tikzpicture}[scale = \scalefac]
        \draw[gray] (-2,0) -- (2, 0);
        \draw[gray] (0,-2) -- (0, 2);
        
        \fill[pattern=crosshatch dots, pattern color=red] (-1,-1) rectangle ++(2,2);
        \fill[pattern=crosshatch dots, pattern color=red] (1,1) -- (-1, 1) -- (-2, 2) -- (2, 2) -- cycle;
        \fill[pattern=crosshatch dots, pattern color=red] (-1,1) -- (-1, -1) -- (-2, -2) -- (-2, 2) -- cycle;
        \fill[pattern=crosshatch dots, pattern color=red] (-1,-1) -- (1, -1) -- (2, -2) -- (-2, -2) -- cycle;
        \fill[pattern=crosshatch dots, pattern color=red] (1,-1) -- (1, 1) -- (2, 2) -- (2, -2) -- cycle;
        
        \draw[red, line width = 0.2em] (1,1) -- (1, -1) -- (-1, -1) -- (-1, 1) -- cycle;
        \draw[red, line width = 0.2em] (1,1) -- (2,2);
        \draw[red, line width = 0.2em] (-1,1) -- (-2,2);
        \draw[red, line width = 0.2em] (1,-1) -- (2,-2);
        \draw[red, line width = 0.2em] (-1,-1) -- (-2,-2);
        
        \filldraw[red] (1,1) circle (3pt);
        \filldraw[red] (-1,1) circle (3pt);
        \filldraw[red] (1,-1) circle (3pt);
        \filldraw[red] (-1,-1) circle (3pt);
        
        \draw[green, line width = 0.2em] (1.5,1.5) -- (-1.5,1.5) -- (-1.5,-1.5) -- (1.5,-1.5) -- cycle;
        
        \filldraw[green] (1.5,1.5) circle (3pt);
        \filldraw[green] (-1.5,1.5) circle (3pt);
        \filldraw[green] (-1.5,-1.5) circle (3pt);
        \filldraw[green] (1.5,-1.5) circle (3pt);
        
    \end{tikzpicture}
    \caption{$\complex{F}_{\max\{1,\|\vec{x}\|_{\infty}\}}$}
    \label{fig:examples_level_sets:sparsity_linf}
\end{subfigure}
\caption{Examples of primal complexes (red) and a level set complex (green).}
\label{fig:examples_level_sets}
\end{figure}
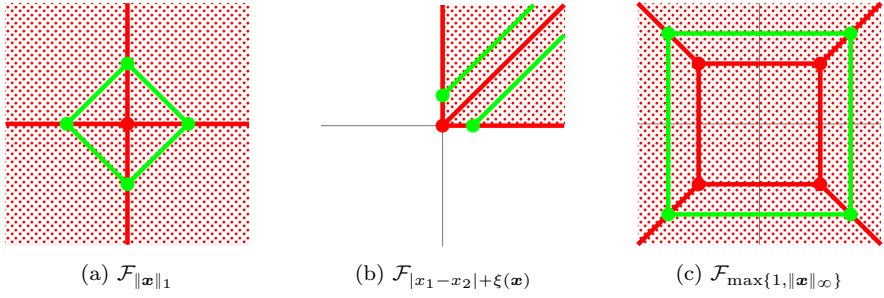

Some level set complexes are illustrated in Figure \ref{fig:examples_level_sets}. Note that each polyhedral set in a level set complex is itself a subset of a polyhedral set in the primal complex. Furthermore, the orthogonality to the dual complex extends to the level set complexes. These properties are summarized in the following two Lemmas.

\begin{lemma}\label{lem:levelsets_subsets}
Let $f: \reals^{n} \rightarrow \reals \cup \{\infty\}$ be convex piecewise linear. If $\set{L} \in \complex{L}_f(c)$ for some $c\in\reals$ and $\vec{x} \in \relint(\set{L})$ and $\vec{x} \in \set{F} \in \complex{F}$, then $\set{L} \subseteq \set{F}$.
\end{lemma}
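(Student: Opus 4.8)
The plan is to exploit the fact that $f$ restricts to an affine function on every cell of the primal complex, and to realize this affine piece as a \emph{global} affine minorant of $f$ whose contact set with $f$ is exactly the cell $\set{F}$. Concretely, since $\vec{x}\in\set{F}\in\complex{F}_f$, the constant subdifferential $\partial f_{\set F}$ is contained in $\partial f(\vec{x})$: the minimal cell of $\complex{F}_f$ whose relative interior contains $\vec{x}$ is a face of $\set{F}$, so by Lemma~\ref{lemma:dual_inclusion} and Proposition~\ref{prop:constant_subdifferential} the set $\partial f_{\set F}$ is a face of $\partial f(\vec{x})$, hence a subset. I would then pick $\vec{v}\in\relint(\partial f_{\set F})$; since $\vec{v}\in\partial f(\vec{x})$, the affine function $\ell(\vec{z}):=f(\vec{x})+\vec{v}^{T}(\vec{z}-\vec{x})$ satisfies $\ell\le f$ on all of $\reals^{n}$ with $\ell(\vec{x})=f(\vec{x})$.

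First I would pin down the contact set $C:=\{\vec{z}\in\reals^{n}\,:\,\ell(\vec{z})=f(\vec{z})\}$. Using the Fenchel--Young equality (exactly as in the proof of Proposition~\ref{prop:duality_of_complexes}), $\ell(\vec{z})=f(\vec{z})$ is equivalent to $\vec{v}\in\partial f(\vec{z})$, so $C=\partial f^{\star}(\vec{v})$; and because $\vec{v}$ was chosen in $\relint(\partial f_{\set F})$, Propositions~\ref{prop:duality_of_complexes} and~\ref{prop:constant_subdifferential} give $\partial f^{\star}(\vec{v})=\set{F}$. Hence $C=\set{F}$ exactly.

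Next I would transfer the level-set constraint. Let $\vec{y}\in\set{L}$; if $\vec{y}=\vec{x}$ we are done, so assume $\vec{y}\neq\vec{x}$. Since $\vec{x}\in\relint(\set{L})$, the point $\vec{y}':=\vec{x}+\varepsilon(\vec{x}-\vec{y})$ lies in $\set{L}$ for all sufficiently small $\varepsilon>0$, and then $\vec{x}=\lambda\vec{y}+(1-\lambda)\vec{y}'$ with $\lambda=\tfrac{\varepsilon}{1+\varepsilon}\in(0,1)$. As $\set{L}\in\complex{L}_f(c)$, the three points $\vec{y},\vec{y}',\vec{x}$ all satisfy $f(\cdot)=c$; in particular $\ell(\vec{y}),\ell(\vec{y}')\le c$ while $\ell(\vec{x})=c$. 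Since $\ell$ is affine, $c=\ell(\vec{x})=\lambda\ell(\vec{y})+(1-\lambda)\ell(\vec{y}')\le c$, which forces $\ell(\vec{y})=c=f(\vec{y})$, i.e.\ $\vec{y}\in C=\set{F}$. As $\vec{y}\in\set{L}$ was arbitrary, $\set{L}\subseteq\set{F}$.

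The only delicate point is the identification $C=\set{F}$: this is precisely where the duality of complexes together with the choice of $\vec{v}$ in the \emph{relative interior} of $\partial f_{\set F}$ is essential. A subgradient lying on the relative boundary of $\partial f_{\set F}$ would only guarantee that $\set{L}$ is contained in some cell of $\complex{F}_f$ having $\set{F}$ as a face, which is weaker than the claim. Everything else reduces to elementary facts about relative interiors and about affine functions restricted to a segment.
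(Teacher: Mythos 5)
Your proof is correct, but it takes a genuinely different route from the paper's. The paper disposes of this lemma in one sentence by observing that the level-set cells arise as faces on the boundary of the polyhedral sublevel set (a slice of the epigraph), and that these faces are compatible with the primal complex; it is essentially an appeal to the epigraph picture with no details supplied. You instead argue on the dual side: you build the supporting affine minorant $\ell$ with slope $\vec{v}\in\relint(\partial f_{\set F})$, identify its contact set with $\set{F}$ via Fenchel--Young and Proposition~\ref{prop:duality_of_complexes}, and then use the constancy of $f$ on $\set{L}$ together with the standard ``extend a segment slightly past a relative interior point'' trick to force every $\vec{y}\in\set{L}$ into the contact set. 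All the steps check out: $\partial f_{\set F}\subseteq\partial f(\vec{x})$ follows from Lemma~\ref{lemma:dual_inclusion} because the unique cell whose relative interior contains $\vec{x}$ is a face of $\set{F}$; the equivalence $\ell(\vec{z})=f(\vec{z})\iff\vec{v}\in\partial f(\vec{z})$ is the standard contact-set characterization of a supporting minorant; and your insistence on $\vec{v}\in\relint(\partial f_{\set F})$ is exactly what makes $C=\set{F}$ rather than some larger cell (indeed, the paper's own proof of Proposition~\ref{prop:duality_of_complexes} glosses over this relative-interior caveat, which you handle correctly). What your approach buys is a complete, verifiable argument that uses only the duality machinery already established; what the paper's buys is brevity at the cost of leaving the reader to reconstruct the sublevel-set argument.
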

\begin{proof}
Follows from the fact that the level set complex is part of the boundary of a polyhedral sublevel set of the epigraph.
\hfill$\square$\end{proof}

\begin{lemma}[Subdifferential orthogonal to level set]\label{lem:subdifferential_orthogonal_level_set}
Let $f: \reals^{n} \rightarrow \reals \cup \{\infty\}$ be convex piecewise linear. For every $\set{L} \in \complex{L}_f(c)$ with $c\in\reals$, let $\vec{x}\in\relint(\set{L})$, then $\partial f(\vec{x})\bot \set{L}$, i.e., $\partial f(\vec{x})^T(\set{L}-\set{L}) = \{\vec{0}\}$.

Particularly, $\set{L} + \partial f(\vec{x})^\bot \in \aff(\set{L})$.

\end{lemma}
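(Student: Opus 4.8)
The plan is to prove the stronger, pointwise orthogonality $\partial f(\vec{x})^T(\set{L}-\set{L})=\{\vec{0}\}$ directly from the subgradient inequality, and then to deduce the affine-hull statement by a dimension count built on Lemmas \ref{lem:levelsets_subsets} and \ref{lemma:properties_duality_of_complexes_2}.

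Fix $c\in\reals$, $\set{L}\in\complex{L}_f(c)$ and $\vec{x}\in\relint(\set{L})$. Since $\set{L}$ is contained in the level set $\{\vec{z}\in\reals^n : f(\vec{z})=c\}$, we have $\vec{x}\in\dom(f)$, and as $f$ is polyhedral it is subdifferentiable throughout its domain, so $\partial f(\vec{x})\neq\emptyset$; if $\set{L}$ is a single point the orthogonality claim is immediate, so assume $\dim(\set{L})\geq 1$. Let $\vec{y}\in\partial f(\vec{x})$. For every $\vec{z}\in\set{L}$ we have $f(\vec{z})=c=f(\vec{x})$, so the subgradient inequality gives $c=f(\vec{z})\geq f(\vec{x})+\vec{y}^T(\vec{z}-\vec{x})=c+\vec{y}^T(\vec{z}-\vec{x})$, i.e. $\vec{y}^T(\vec{z}-\vec{x})\leq 0$. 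Hence the linear functional $\vec{z}\mapsto\vec{y}^T\vec{z}$ attains its maximum over the convex set $\set{L}$ at the relative interior point $\vec{x}$. I would then invoke the elementary fact that this forces the functional to be constant on $\set{L}$: if $\vec{y}^T\vec{z}<\vec{y}^T\vec{x}$ for some $\vec{z}\in\set{L}$, then, because $\vec{x}\in\relint(\set{L})$, the point $\vec{x}+\epsilon(\vec{x}-\vec{z})$ lies in $\set{L}$ for all sufficiently small $\epsilon>0$ and yields a strictly larger functional value, a contradiction. Therefore $\vec{y}^T(\set{L}-\set{L})=\{0\}$, and since $\vec{y}\in\partial f(\vec{x})$ was arbitrary we obtain $\partial f(\vec{x})^T(\set{L}-\set{L})=\{\vec{0}\}$, which is exactly $\partial f(\vec{x})\bot\set{L}$.

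For the ``particularly'' statement, with $\set{S}^\bot:=\{\vec{v}\in\reals^n : \vec{v}^T\vec{s}=0\text{ for all }\vec{s}\in\set{S}\}$, the orthogonality just shown says $\set{L}-\set{L}\subseteq\partial f(\vec{x})^\bot$, hence $\aff(\set{L})=\vec{x}+\Span(\set{L}-\set{L})\subseteq\vec{x}+\partial f(\vec{x})^\bot\subseteq\set{L}+\partial f(\vec{x})^\bot$. The reverse inclusion follows once $\Span(\set{L}-\set{L})=\partial f(\vec{x})^\bot$, and since one containment is already in hand this reduces to the dimension identity $\dim(\set{L})=n-\dim(\Span(\partial f(\vec{x})))$. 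To obtain it, let $\set{F}\in\complex{F}_f$ be the primal cell with $\vec{x}\in\relint(\set{F})$, so $\partial f(\vec{x})=\partial f_{\set{F}}$ and, by Lemma \ref{lem:levelsets_subsets}, $\set{L}\subseteq\set{F}$; moreover $f$ restricted to $\set{F}$ is a single affine function (Proposition \ref{prop:constant_subdifferential}). By the construction of $\complex{L}_f(c)$ from the epigraph used in the proof of Lemma \ref{lem:levelsets_subsets}, $\set{L}$ coincides with $\set{F}\cap\{f=c\}$, which is either all of $\set{F}$ (when $f|_{\set{F}}\equiv c$) or a section of $\aff(\set{F})$ by a hyperplane through $\relint(\set{F})$ (otherwise); accordingly $\dim(\set{L})$ equals $\dim(\set{F})$ or $\dim(\set{F})-1$. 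On the other hand $f|_{\set{F}}$ is constant exactly when $\vec{0}\in\aff(\partial f_{\set{F}})$, i.e. exactly when $\dim(\Span(\partial f_{\set{F}}))=\dim(\partial f_{\set{F}})$, and otherwise $\dim(\Span(\partial f_{\set{F}}))=\dim(\partial f_{\set{F}})+1$. Combining these cases with $\dim(\set{F})+\dim(\partial f_{\set{F}})=n$ from Lemma \ref{lemma:properties_duality_of_complexes_2}(\ref{lemma:duality_dimensions}) gives $\dim(\set{L})=n-\dim(\Span(\partial f_{\set{F}}))$ in both cases, as required.

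The core orthogonality is short once phrased as ``a supporting linear functional cannot attain its maximum at a relative interior point unless it is constant there.'' The step I expect to demand the most care is the bookkeeping in the last paragraph: verifying that the level-set cell through $\vec{x}$ is exactly the slice $\set{F}\cap\{f=c\}$ (and not a lower-dimensional face of it), and correctly tracking whether the affine map $f|_{\set{F}}$ is constant, which is precisely what separates $\dim(\Span(\partial f_{\set{F}}))$ from $\dim(\partial f_{\set{F}})$.
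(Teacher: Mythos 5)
Your proof of the core orthogonality is correct but takes a genuinely different route from the paper's. The paper argues via conjugacy: for $\vec{y}\in\partial f(\vec{x})$ and $\vec{v},\vec{w}\in\set{L}$ it writes $\vec{y}^T(\vec{v}-\vec{w})=f^\star(\vec{y})-f^\star(\vec{y})+f(\vec{v})-f(\vec{w})=0$ using the Fenchel--Young equality, exactly as in the proof of Lemma \ref{lemma:properties_duality_of_complexes_2}(\ref{lemma:duality_orthogonal}). (Strictly speaking that one-liner needs $\vec{y}\in\partial f(\vec{v})$ for every $\vec{v}\in\set{L}$, not just at $\vec{x}$; this holds because $\set{L}\subseteq\set{F}$ and $\partial f(\vec{x})=\partial f_{\set{F}}\subseteq\partial f(\vec{v})$, but the paper leaves it implicit.) You instead use only the subgradient inequality plus the elementary fact that a linear functional attaining its maximum over a convex set at a relative interior point must be constant there; this is more self-contained, sidesteps the issue above, and does not invoke conjugacy at all. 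For the ``particularly'' clause the paper gives no argument, whereas you supply a full dimension count showing $\dim(\set{L})=n-\dim(\Span(\partial f_{\set{F}}))$ by distinguishing whether $f$ is constant on $\set{F}$ (equivalently $\vec{0}\in\aff(\partial f_{\set{F}})$); this is correct and is the real content of that clause, though it does rest on reading the (loosely defined) level set complex so that the cell with $\vec{x}\in\relint(\set{L})$ is exactly $\set{F}\cap\{f=c\}$ for the primal cell $\set{F}$ with $\vec{x}\in\relint(\set{F})$ --- which is the intended reading given the epigraph-slicing construction hinted at in the proof of Lemma \ref{lem:levelsets_subsets}. In short: same statement, more elementary and more complete proof.
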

\begin{proof}
Let $\vec{y}\in \partial f(\vec{x})$ and $\vec{v}, \vec{w} \in \set{L}$, then by conjugacy, and $\set{L}$ being a level set,
\begin{equation*}
    \vec{y}^T(\vec{v} - \vec{w}) = f^\star(\vec{y}) - f^\star(\vec{y}) + f(\vec{v}) - f(\vec{w}) = \vec{0}.
\end{equation*}
\hfill$\square$\end{proof}

Now consider the problem of minimizing $\vec{x} \mapsto D(A\vec{x},\vec{b}) + f(\vec{x})$. For any minimizer, if we can find a vector in the null-space of $A$ such that translating the minimizer by this vector stays in the same level-set, then the solution is not unique. The following lemma states conditions to obtain such vectors.

\begin{lemma}\label{lem:codimension}
Let $f: \reals^{n} \rightarrow \reals \cup \{\infty\}$ be convex piecewise linear. Let $\partial f_{\set{F}} \in \complex{F}_f^\star$. If $\codim(\partial f_{\set{F}}) - \Rank(A) = k > 0$ then $\dim(\nullspace(A) \cap \partial f_\set{F}^\bot) = k$.
\end{lemma}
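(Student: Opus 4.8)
The plan is to turn the statement into the rank--nullity theorem for the restriction of $A$ to the subspace $\partial f_{\set{F}}^\bot$, the only real input being the dimension of $\partial f_{\set{F}}^\bot$, which is supplied by the duality lemmas already proved. First I would note that $\partial f_{\set{F}}^\bot$ is a genuine linear subspace of $\reals^n$: by property~\ref{lemma:duality_orthogonal} of Lemma~\ref{lemma:properties_duality_of_complexes_2} it is the orthogonal complement of the direction space $\Span(\partial f_{\set{F}} - \partial f_{\set{F}})$ (equivalently, by property~\ref{lemma:duality_orthogonal_2}, the direction space of $\aff(\set{F})$), and by property~\ref{lemma:duality_dimensions} of the same lemma
\begin{equation*}
    \dim\left(\partial f_{\set{F}}^\bot\right) = n - \dim\left(\partial f_{\set{F}}\right) = \codim\left(\partial f_{\set{F}}\right).
\end{equation*}

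Next I would identify $\nullspace(A) \cap \partial f_{\set{F}}^\bot$ with the null space of the linear map $A|_{\partial f_{\set{F}}^\bot} : \partial f_{\set{F}}^\bot \rightarrow \reals^m$, since a vector lies in the intersection exactly when it lies in $\partial f_{\set{F}}^\bot$ and is annihilated by $A$. Applying rank--nullity to this map, and writing $A\partial f_{\set{F}}^\bot$ for its image, gives
\begin{equation*}
    \dim\left(\nullspace(A) \cap \partial f_{\set{F}}^\bot\right) = \dim\left(\partial f_{\set{F}}^\bot\right) - \dim\left(A\partial f_{\set{F}}^\bot\right) = \codim\left(\partial f_{\set{F}}\right) - \dim\left(A\partial f_{\set{F}}^\bot\right),
\end{equation*}
so the claim is equivalent to $\dim(A\partial f_{\set{F}}^\bot) = \Rank(A)$, i.e.\ that $A$ does not lose rank when restricted to $\partial f_{\set{F}}^\bot$. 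An alternative but equivalent route is the subspace dimension formula $\dim(U\cap V) = \dim U + \dim V - \dim(U+V)$ with $U=\nullspace(A)$ of dimension $\Nullity(A) = n - \Rank(A)$ and $V = \partial f_{\set{F}}^\bot$, which reduces the claim to $\nullspace(A) + \partial f_{\set{F}}^\bot = \reals^n$.

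I expect that last point to be the only delicate step. The inclusion $A\partial f_{\set{F}}^\bot \subseteq \range(A)$ immediately gives $\dim(A\partial f_{\set{F}}^\bot) \le \Rank(A)$ and hence the lower bound $\dim(\nullspace(A) \cap \partial f_{\set{F}}^\bot) \ge \codim(\partial f_{\set{F}}) - \Rank(A) = k$, and since $k>0$ this intersection already contains a nonzero vector, which is what the subsequent non-uniqueness arguments need. The matching upper bound, hence the exact value $k$, is where one has to use the hypothesis $k>0$ together with how the cell $\set{F}$ is positioned relative to $\nullspace(A)$ rather than generic linear algebra (equivalently, that $\row(A)$ meets $\Span(\partial f_{\set{F}} - \partial f_{\set{F}})$ only at the origin); everything else is bookkeeping with the dimensions taken from Lemma~\ref{lemma:properties_duality_of_complexes_2}.
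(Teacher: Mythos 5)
Your argument is essentially the paper's own: the paper also just counts dimensions, observing that $\dim(\partial f_{\set{F}}^\bot)+\dim(\nullspace(A))=n+k>n$ forces $\partial f_{\set{F}}^\bot$ and $\nullspace(A)$ to ``have a space of dimension $k$ in common.'' That argument, like yours, only delivers the lower bound $\dim(\nullspace(A)\cap\partial f_{\set{F}}^\bot)\geq k$; the paper does not supply the upper bound either. So you have not missed an idea that the paper has.

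Your reservation about the exact equality is in fact justified: as stated, the equality is false without a further hypothesis on how $\row(A)$ sits relative to $\Span(\partial f_{\set{F}}-\partial f_{\set{F}})$. Take $n=3$, $f(\vec{x})=\|\vec{x}\|_1$, and $\set{F}=\{\vec{x}\,:\,x_1\geq 0,\ x_2\geq 0,\ x_3=0\}$, so that $\partial f_{\set{F}}=\{1\}\times\{1\}\times[-1,1]$, $\codim(\partial f_{\set{F}})=2$ and $\partial f_{\set{F}}^\bot=\Span(\vec{e}_1,\vec{e}_2)$. With $A=\begin{bmatrix}0&0&1\end{bmatrix}$ one has $\Rank(A)=1$, hence $k=1>0$, yet $\nullspace(A)\cap\partial f_{\set{F}}^\bot=\Span(\vec{e}_1,\vec{e}_2)$ has dimension $2$. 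Here $A$ loses all of its rank on $\partial f_{\set{F}}^\bot$, which is exactly the failure mode you isolated ($\dim(A\partial f_{\set{F}}^\bot)<\Rank(A)$). The honest version of the lemma is the inequality $\dim(\nullspace(A)\cap\partial f_{\set{F}}^\bot)\geq k$, and that is all the downstream results use: Proposition~\ref{prop:finding_two_solutions} and Theorem~\ref{thm:nec_uniqueness} only need a $k$-dimensional affine piece of non-unique solutions, i.e.\ ``at least $k$.'' In short: your proof of the lower bound is correct and coincides with the paper's; the exact value $k$ is an overstatement in the source, not a gap in your argument.
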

\begin{proof}
We have $\codim(\partial f_{\set{F}}) - \Rank(A) = k$ if and only if $\dim(\partial f_{\set{F}}^\bot) + \dim(\nullspace(A)) = n + k$.
Thus, because $n + k > n$, we get $\partial f_{\set{F}}^\bot$ and $\nullspace(A)$ have a space of dimension $k$ in common.
\hfill$\square$\end{proof}

Lemma \ref{lem:codimension} allows us to construct whole sets of vectors $\vec{x}$ with constant $A\vec{x}$ and $f(\vec{x})$. This uniqueness breaking property is formulated in the following proposition.

\begin{proposition}\label{prop:finding_two_solutions}
Let $f: \reals^{n} \rightarrow \reals \cup \{\infty\}$ be convex piecewise linear, $A \in \reals^{m\times n}$ and $\vec{x} \in \set{F} \in \complex{F}_f$ with $\codim(\partial f_{\set{F}}) - \Rank(A) = k > 0 $. Then there exists $\set{L} \in \complex{L}_f(f(\vec{x}))$ and a polyhedral set $\set{S} \subseteq \set{L} \subseteq \set{F}$ for which $\vec{x} \in \set{S}$, $\dim(\set{S}) = k$ and $\set{S} - \set{S} \subseteq \nullspace(A)$.

\end{proposition}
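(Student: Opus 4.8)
The plan is to combine Lemma~\ref{lem:codimension} (which produces a $k$-dimensional subspace $\set{W} := \nullspace(A) \cap \partial f_{\set{F}}^\bot$) with the duality relations of Lemma~\ref{lemma:properties_duality_of_complexes_2} and the level set structure of Lemma~\ref{lem:levelsets_subsets} and Lemma~\ref{lem:subdifferential_orthogonal_level_set}. First I would apply Lemma~\ref{lem:codimension} to get $\dim(\set{W}) = k > 0$. The key geometric fact is property~\ref{lemma:duality_orthogonal_2} of Lemma~\ref{lemma:properties_duality_of_complexes_2}, namely $\set{F} + \partial f_{\set{F}}^\bot \subseteq \aff(\set{F})$, which says that moving a point of $\set{F}$ in any direction orthogonal to $\partial f_{\set{F}}$ keeps us inside $\aff(\set{F})$; in particular, since $\set{W} \subseteq \partial f_{\set{F}}^\bot$, the affine slice $(\vec{x} + \set{W}) \cap \set{F}$ is a well-defined $k$-dimensional-or-less polyhedral set, and it is full $k$-dimensional near $\vec{x}$ provided $\vec{x}$ lies in the relative interior of $\set{F}$.

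The main subtlety — and the step I expect to require the most care — is the passage to a point in $\relint(\set{F})$ and then to the level set. If $\vec{x} \in \relint(\set{F})$, then a small ball around $\vec{x}$ within $\aff(\set{F})$ lies in $\set{F}$, so $\set{S}_0 := \{\vec{x} + \vec{w} : \vec{w} \in \set{W},\ \|\vec{w}\| \le \delta\}$ is a $k$-dimensional polyhedral set contained in $\set{F}$ for $\delta$ small. If $\vec{x}$ is only in $\set{F}$ but not its relative interior, I would instead pick any $\vec{x}' \in \relint(\set{F})$, run the argument there, and then note that $\codim(\partial f_{\set{F}})$ and hence $\set{W}$ depends only on $\set{F}$, not on the chosen point; since the statement only asserts existence of \emph{some} $\set{L}$ and $\set{S}$, and since the proposition as phrased takes $\vec{x} \in \set{F}$, I would in fact translate the whole construction so that $\vec{x}$ itself is included — using that $\set{W} - \set{W} = \set{W}$ is a subspace, one can take $\set{S} = \conv(\{\vec{x}\} \cup \set{S}_0)$ or simply re-center, as long as the result stays inside $\set{F}$ (which it does by convexity of $\set{F}$ once $\set{S}_0 \subseteq \set{F}$ and $\vec{x} \in \set{F}$, but then $\set{S}-\set{S}$ need no longer lie in $\set{W}$, so the cleanest route is to first reduce to $\vec{x} \in \relint(\set{F})$ by the observation that the conclusion for a relative-interior point implies it for the face $\set{F}$, then note the statement's $\vec{x}$ can be absorbed by shrinking).

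Next I would verify the level set claim. By Proposition~\ref{prop:constant_subdifferential}, $\partial f$ is constant on $\relint(\set{F})$, equal to $\partial f_{\set{F}}$, so $f$ is affine on $\set{F}$ with some gradient $\vec{g} \in \partial f_{\set{F}}$; for any $\vec{w} \in \set{W} \subseteq \partial f_{\set{F}}^\bot$ we have $f(\vec{x} + \vec{w}) = f(\vec{x}) + \vec{g}^T\vec{w} = f(\vec{x})$, so $\set{S}_0$ lies entirely in the level set $\{\vec{y} : f(\vec{y}) = f(\vec{x})\}$. Hence $\set{S}_0$ is contained in a single polyhedral piece $\set{L} \in \complex{L}_f(f(\vec{x}))$ (taking $\set{L}$ to be the piece whose relative interior meets $\relint(\set{S}_0)$), and by Lemma~\ref{lem:levelsets_subsets} that $\set{L}$ satisfies $\set{L} \subseteq \set{F}$, giving the chain $\set{S} \subseteq \set{L} \subseteq \set{F}$. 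Finally, $\set{S} - \set{S} \subseteq \set{W} - \set{W} = \set{W} \subseteq \nullspace(A)$, and $\dim(\set{S}) = k$ by construction, completing the proof.
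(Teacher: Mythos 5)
Your route is the same as the paper's (Lemma~\ref{lem:codimension} for the $k$-dimensional subspace $\set{W}=\nullspace(A)\cap\partial f_{\set{F}}^\bot$, then orthogonality and the level-set lemmas; the paper simply takes $\set{S}=[\vec{x}+\set{W}]\cap\set{L}$ with $\vec{x}\in\relint(\set{L})$), but there is a genuine gap at your level-set step, namely the second equality in $f(\vec{x}+\vec{w})=f(\vec{x})+\vec{g}^T\vec{w}=f(\vec{x})$. For Lemma~\ref{lem:codimension} to produce a subspace of dimension $k=\codim(\partial f_{\set{F}})-\Rank(A)$, the symbol $\partial f_{\set{F}}^\bot$ must denote the orthogonal complement of the \emph{direction space} $\Span(\partial f_{\set{F}}-\partial f_{\set{F}})$, which is also the orthogonality actually established in Lemma~\ref{lemma:properties_duality_of_complexes_2}(\ref{lemma:duality_orthogonal}). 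A vector $\vec{w}$ in that complement is orthogonal to \emph{differences} of subgradients, not to a subgradient $\vec{g}\in\partial f_{\set{F}}$ itself, so $\vec{g}^T\vec{w}=0$ does not follow --- and it genuinely fails: for $f=\|\cdot\|_1$ on $\reals^2$ and $\set{F}=\reals^2_{\geq 0}$ one has $\partial f_{\set{F}}=\{(1,1)^T\}$, hence $\partial f_{\set{F}}^\bot=\reals^2$; taking $A=0$ gives $k=2$ and $\set{W}=\reals^2$, yet no $2$-dimensional set lies in a level set of $\|\cdot\|_1$. So without some additional hypothesis the proposition itself cannot be proved as stated, and your argument does not close; the paper's own one-line proof glosses over exactly the same point.

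The ingredient that rescues the statement in the only place it is used (Theorem~\ref{thm:nec_uniqueness}, where $\vec{x}$ is a minimizer) is accessibility: if there exists $\vec{z}$ with $A^T\vec{z}\in\partial f_{\set{F}}$, then for $\vec{w}\in\set{W}$ and any $\vec{g}\in\partial f_{\set{F}}$ we get $\vec{g}^T\vec{w}=(A^T\vec{z})^T\vec{w}+(\vec{g}-A^T\vec{z})^T\vec{w}=\vec{z}^T(A\vec{w})+0=0$, using $\vec{w}\in\nullspace(A)$ for the first term and $\vec{w}\in\partial f_{\set{F}}^\bot$ for the second. With this, your level-set computation and the claim that $\vec{x}+\set{W}$ stays in $\aff(\set{L})$ both go through, so you should add $\row(A)\cap\partial f_{\set{F}}\neq\emptyset$ as a hypothesis or at least record that it is what makes $\vec{g}^T\vec{w}=0$ true. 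Separately, your second paragraph on the case $\vec{x}\notin\relint(\set{F})$ trails off without reaching a conclusion; the difficulty there is real (for a boundary point the level-set cell through $\vec{x}$ can be lower-dimensional than $k$, since $\partial f(\vec{x})$ is then strictly larger than $\partial f_{\set{F}}$), and the version of the statement you can actually prove cleanly is the one with $\vec{x}\in\relint(\set{F})$, equivalently $\partial f(\vec{x})=\partial f_{\set{F}}$, which is the form needed downstream.
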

\begin{proof}
Let $\set{L} \in \complex{L}_f(f(\vec{x}))$ such that $\vec{x} \in \relint(\set{L})$. If $\codim(\partial f_{\set{F}}) - \Rank(A) = k > 0 $, then by Lemma \ref{lem:codimension} and \ref{lem:subdifferential_orthogonal_level_set}, the set $\set{S} = \left[\vec{x} + \nullspace(A) \cap \partial f_\set{F}^\bot \right] \cap \set{L}$ suffices.
\hfill$\square$\end{proof}

For the converse, we want to show that if there exists two solutions, then $\codim(\partial f_{\set{F}}) - \Rank(A) > 0$. The following proposition, based on arguments from \cite{ewald2020distribution,schneider2022geometry,tardivel2021geometry}, provides this converse, requiring a condition on the effective domain of $f$ and a condition on the two solutions, which is shown to hold in Subsection \ref{subsec:uniqueness}.

\begin{proposition}\label{prop:two_solutions_large_codimension}
Let $f: \reals^{n} \rightarrow \reals \cup \{\infty\}$ be convex piecewise linear such that $\dim(\dom(f)) = n$ and  $A \in \reals^{m\times n}$. 
If there exist distinct $\vec{x}, \vec{y} \in \reals^n$ such that $\vec{x} - \vec{y} \in \nullspace(A)$ and $\row(A) \cap \partial f(\vec{x}) \cap \partial f(\vec{y}) \neq \emptyset$, then there exist $\set{F} \in \complex{F}_f$ such that $\vec{x},\vec{y}\in \set{F}$ and $\codim(\partial f_{\set{F}}) > \Rank(A)$.
\end{proposition}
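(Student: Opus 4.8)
The driving idea is that full-dimensionality of $\dom(f)$ forces some full-dimensional cell of $\complex{F}_f$ to contain both $\vec{x}$ and $\vec{y}$; such a cell has a $0$-dimensional dual cell, i.e., one of codimension $n$, and since a nontrivial null space forces $\Rank(A) < n$, the inequality $\codim(\partial f_{\set{F}}) > \Rank(A)$ follows at once.

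First I would fix the relevant cells. The hypothesis implies $\vec{x},\vec{y} \in \dom(f)$ and $\partial f(\vec{x}) \cap \partial f(\vec{y}) \neq \emptyset$. Let $\set{F}_x,\set{F}_y \in \complex{F}_f$ be the unique cells with $\vec{x} \in \relint(\set{F}_x)$ and $\vec{y} \in \relint(\set{F}_y)$; by Proposition \ref{prop:constant_subdifferential}, $\partial f_{\set{F}_x} = \partial f(\vec{x})$ and $\partial f_{\set{F}_y} = \partial f(\vec{y})$. Since $\complex{F}_f^\star$ is a polyhedral complex (Definition \ref{def:polyhedral_complex}), the nonempty set $\partial f(\vec{x}) \cap \partial f(\vec{y})$ is a common face of $\partial f_{\set{F}_x}$ and $\partial f_{\set{F}_y}$, hence itself a cell of $\complex{F}_f^\star$.

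The main step, and the only one where $\dim(\dom(f)) = n$ is used, is to produce a vertex of $\partial f(\vec{x}) \cap \partial f(\vec{y})$. I would first show that $\partial f(\vec{x})$ contains no line: if $\vec{\eta}_0 + s\vec{a} \in \partial f(\vec{x})$ for all $s \in \reals$ with $\vec{a} \neq \vec{0}$, then the subgradient inequality $f(\vec{z}) \geq f(\vec{x}) + (\vec{\eta}_0 + s\vec{a})^T(\vec{z}-\vec{x})$ evaluated at any $\vec{z} \in \dom(f)$ and sent to $s \to \pm\infty$ forces $\vec{a}^T(\vec{z}-\vec{x}) = 0$; thus $\dom(f)$ lies in a hyperplane, contradicting $\dim(\dom(f)) = n$. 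So the nonempty polyhedron $\partial f(\vec{x}) \cap \partial f(\vec{y}) \subseteq \partial f(\vec{x})$ is line-free and therefore has a vertex $\set{G}$. As a $0$-dimensional face of a cell of $\complex{F}_f^\star$, the set $\set{G}$ is itself a cell of $\complex{F}_f^\star$, and it is a face of both $\partial f_{\set{F}_x}$ and $\partial f_{\set{F}_y}$.

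Finally I would transfer back to the primal complex. Let $\set{F} \in \complex{F}_f$ be the cell with $\partial f_{\set{F}} = \set{G}$. By Lemma \ref{lemma:dual_inclusion}, since $\partial f_{\set{F}}$ is a face of $\partial f_{\set{F}_x}$ and of $\partial f_{\set{F}_y}$, both $\set{F}_x$ and $\set{F}_y$ are faces of $\set{F}$, whence $\vec{x} \in \set{F}_x \subseteq \set{F}$ and $\vec{y} \in \set{F}_y \subseteq \set{F}$. By Lemma \ref{lemma:properties_duality_of_complexes_2}(\ref{lemma:duality_dimensions}), $\codim(\partial f_{\set{F}}) = n - \dim(\partial f_{\set{F}}) = n$. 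Since $\vec{x} - \vec{y} \in \nullspace(A)$ is nonzero, $\Nullity(A) \geq 1$, so $\Rank(A) = n - \Nullity(A) < n = \codim(\partial f_{\set{F}})$, as required. I expect the line-free step to be the only delicate point; note that the row-space part of the hypothesis enters the argument only through $\partial f(\vec{x}) \cap \partial f(\vec{y}) \neq \emptyset$, while the full-dimensionality assumption is genuinely needed (the statement fails, e.g., for $f = \chi_{\{0\}\times\reals}$ on $\reals^2$ with $A$ the projection onto the first coordinate).
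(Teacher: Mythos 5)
Your proof is correct for the proposition as literally stated, but it takes a genuinely different route from the paper's, and the difference is not cosmetic. You use $\dim(\dom(f))=n$ to make every subdifferential line-free, extract a vertex $\set{G}$ of the nonempty polyhedron $\partial f(\vec{x})\cap\partial f(\vec{y})$, and pass to its primal cell $\set{F}$, which is full-dimensional, so that $\codim(\partial f_{\set{F}})=n>\Rank(A)$ follows from $\Nullity(A)\geq 1$ alone. Each step checks out: the line-freeness argument, the existence of a vertex of a nonempty line-free polyhedron, and the transfer through Lemma \ref{lemma:dual_inclusion} and Lemma \ref{lemma:properties_duality_of_complexes_2}(\ref{lemma:duality_dimensions}) are all sound. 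The paper instead takes $\partial f_{\set{F}}$ to be a minimal face of $\partial f(\vec{x})\cap\partial f(\vec{y})$ that still meets $\row(A)$ and derives a contradiction from $\codim(\partial f_{\set{F}})\leq\Rank(A)$: the null vector $\vec{x}-\vec{y}\in\set{F}-\set{F}$ forces $\dim(A(\set{F}-\set{F}))<\dim(\set{F})$, which yields a nonzero $A^T\vec{d}$ orthogonal to $\set{F}-\set{F}$ and hence a line in $\row(A)\cap\aff(\partial f_{\set{F}})$; line-freeness then pushes $\row(A)$ into a proper face, contradicting minimality.

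What the paper's longer argument buys is that its $\set{F}$ additionally satisfies $\row(A)\cap\partial f_{\set{F}}\neq\emptyset$, i.e., it is accessible. That extra property is not in the statement of the proposition, but it is exactly what Theorem \ref{thm:suf_uniqueness} and Corollary \ref{cor:nec_suf_un} read off from it. Your vertex $\set{G}$ need not lie in $\row(A)$ --- the hypothesis only places $\row(A)$ somewhere in $\partial f(\vec{x})\cap\partial f(\vec{y})$, not at any particular vertex --- so your $\set{F}$ may be inaccessible, and your conclusion (a full-dimensional cell containing both points) is nearly trivial once $\Rank(A)<n$. Your own remark that the row-space hypothesis enters only through $\partial f(\vec{x})\cap\partial f(\vec{y})\neq\emptyset$ is the symptom: you are proving a weaker fact than the one the paper actually needs. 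The proof discharges the proposition verbatim, but it could not be substituted for the paper's proof without breaking the deduction of Theorem \ref{thm:suf_uniqueness}.
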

\begin{proof}
Let $\partial f_{\set{F}} \in \complex{F}_f^\star$ be the smallest face of $\partial f(\vec{x}) \cap \partial f(\vec{y})$ such that $\row(A) \cap \partial f_{\set{F}} \neq \emptyset$. Then by Lemma \ref{lemma:dual_inclusion}, $\vec{x},\vec{y}\in \set{F}$. If $\codim(\partial f_{\set{F}}) > \Rank(A)$, we are done. So assume that $\codim(\partial f_{\set{F}}) \leq \Rank(A)$, or equivalently $\dim(\set{F}) \leq \Rank(A)$, then,
\begin{equation*}
    \dim(A(\set{F}-\set{F})) < \dim(\set{F}-\set{F}) = \dim(\set{F}) \leq \Rank(A).
\end{equation*}
Thus, the strict inclusion $A(\set{F}-\set{F}) \subset \col(A)$, equivalently $\nullspace(A^T) \subset (A(\set{F}-\set{F}))^\bot$ holds. Therefore, there exists $\vec{d} \in \reals^m$ such that $A^T\vec{d} \neq \vec{0}$ and $(\set{F}-\set{F})^TA^T\vec{d} = \{\vec{0}\}$, hence $\vec{0} \neq A^T\vec{d} \in \partial f_{\set{F}}^\bot$.

By assumption, there also exists $\vec{z} \in \reals^m$ such that $A^T\vec{z} \in \partial f_{\set{F}}$.
Now consider the line $A^T\vec{z} + tA^T\vec{d}$ for $t\in \reals^n$. By construction, this line is contained in $\row(A) \cap \partial f_{\set{F}}$. However, by the assumption that $\dim(\dom(f)) = n$, $\partial f_{\set{F}}$ cannot contain a line, hence $\row(A)$ must intersect a face of $\partial f_{\set{F}}$, which is a contradiction.
\hfill$\square$\end{proof}

\section{Well-posedness of sparse regularized linear regression}\label{sec:well-posedness}

In this section, the well-posedness properties of sparse regularized linear regression problems are studied using the geometric framework for convex piecewise linear functions developed in Section \ref{sec:convex_piecewise_linear_functions}. In Subsection \ref{subsec:regression}, the general class of sparse regularized linear regression and its solution set are introduced, and Subsections \ref{subsec:existence}, \ref{subsec:uniqueness} and \ref{subsec:continuity} discuss the three well-posedness properties of existence, uniqueness and continuity, respectively.

\subsection{sparse regularized linear regression}\label{subsec:regression}

Regularized linear regression problems of the form \eqref{eq:variational_introduction} can be motivated in different ways. For completeness, we mention a popular motivation within statistical inverse problems \cite{kaipio2006statistical} and Bayesian statistics \cite{gribonval2011should}. In a Bayesian setting, both the parameters $\vec{x}$ and the measurements $\vec{b}$ are modeled as random variables. Instead of measuring $\vec{x}$ directly, we measure $A\vec{x}$ which can be corrupted by noise. The relation between the parameters $\vec{x}$ and the corrupted measurements $\vec{b}$ can be described by a likelihood function of the form,
\begin{equation*}
    \pi(\vec{b}\,|\,\vec{x})\ \propto\ \exp\left[-D(A\vec{x},\vec{b})\right].
\end{equation*}
Any a priori knowledge on the parameters $\vec{x}$ can be modeled in a prior distribution as,
\begin{equation*}
    \pi(\vec{x})\ \propto\ \exp\left[-f(\vec{x})\right].
\end{equation*}
Through Bayes' formula, the posterior distribution satisfies $\pi(\vec{x}\,|\,\vec{y})\, \propto\, \pi(\vec{y}\,|\,\vec{x}) \pi(\vec{x})$. There are various ways of obtaining a point estimate from the posterior distribution, e.g., the Minimum Mean Square Error (MMSE) and the Maximum a Posteriori (MAP) estimator. A MAP estimator is defined as a minimizer of the optimization problem,
\begin{equation*}
    \argmax_{\vec{x}} \{\pi(\vec{x}\,|\,\vec{b})\} = \argmin_{\vec{x}} \{-\log(\pi(\vec{b}\,|\,\vec{x})) -\log(\pi(\vec{x}))\} = \argmin_{\vec{x}} \{D(A\vec{x},\vec{b})) + f(\vec{x})\}.
\end{equation*}
Thus, computing a MAP estimate is equivalent to solving optimization problem \eqref{eq:variational_introduction}. 

For the theory discussed in this section, we cannot use arbitrary functions $D$ to measure how well a solution $\vec{x}$ matches the data $\vec{b}$, thus we will often consider the following family of data fidelity functions.
\begin{definition}[Smooth data fidelity]\label{def:smooth_data_fidelity}
We call $D : \reals^m \times \reals^m \rightarrow \reals$ a smooth data fidelity function if the following conditions hold:
\begin{itemize}
    \item The map $\vec{z} \mapsto D(\vec{z}, \vec{b})$ is strictly convex and differentiable for all $\vec{b} \in \reals^m$,
    \item The map $\vec{b} \mapsto \nabla_{\vec{z}} D(\vec{z}, \vec{b}) : \reals^m \rightarrow \reals^m$ is bijective for all $\vec{z} \in \reals^m$,
\end{itemize}
where $\nabla_{\vec{z}} D$ denotes the gradient of $D$ with respect to the first variable.
\end{definition}

Although Definition \ref{def:smooth_data_fidelity} can be quite restrictive, many popular classes of data fidelity functions fall into this class, as stated in the following propositions.

\begin{proposition}\label{prop:bregman_divergence}
Consider the Bregman divergence associated to a continuously differentiable convex function $\phi: \reals^m \rightarrow \reals$, i.e., $D_\phi(\vec{z}, \vec{b}) = \phi(\vec{z}) - \phi(\vec{b}) - \nabla \phi(\vec{b})^T(\vec{z} - \vec{b})$.
Assume that $\phi$ is strictly convex and $\range(\nabla \phi) = \reals^m$, then $D_\phi$ is a smooth data fidelity function.
\end{proposition}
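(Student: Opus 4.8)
The plan is to check the two defining properties of a smooth data fidelity function from Definition \ref{def:smooth_data_fidelity} directly, using only the stated hypotheses on $\phi$ (continuous differentiability, strict convexity, and $\range(\nabla\phi) = \reals^m$). The key observation that makes everything routine is that, for fixed $\vec{b}$, the Bregman divergence $D_\phi(\cdot,\vec{b})$ differs from $\phi$ only by an affine function.

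First I would fix $\vec{b} \in \reals^m$ and rewrite $D_\phi(\vec{z},\vec{b}) = \phi(\vec{z}) - \big[\phi(\vec{b}) + \nabla\phi(\vec{b})^T(\vec{z}-\vec{b})\big]$, exhibiting $\vec{z}\mapsto D_\phi(\vec{z},\vec{b})$ as $\phi$ minus a function that is affine in $\vec{z}$. Since subtracting an affine term preserves both strict convexity and differentiability, the first bullet of Definition \ref{def:smooth_data_fidelity} holds, and moreover $\nabla_{\vec{z}} D_\phi(\vec{z},\vec{b}) = \nabla\phi(\vec{z}) - \nabla\phi(\vec{b})$.

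Next I would fix $\vec{z} \in \reals^m$ and study the map $\vec{b} \mapsto \nabla_{\vec{z}} D_\phi(\vec{z},\vec{b}) = \nabla\phi(\vec{z}) - \nabla\phi(\vec{b})$. Surjectivity follows immediately from $\range(\nabla\phi) = \reals^m$: given any target $\vec{w}$, choose $\vec{b}$ with $\nabla\phi(\vec{b}) = \nabla\phi(\vec{z}) - \vec{w}$. For injectivity, it suffices to show $\nabla\phi$ is injective; this is where I would invoke the standard fact that the gradient of a differentiable strictly convex function is strictly monotone, i.e., $(\nabla\phi(\vec{b}_1) - \nabla\phi(\vec{b}_2))^T(\vec{b}_1 - \vec{b}_2) > 0$ whenever $\vec{b}_1 \neq \vec{b}_2$, which forces $\nabla\phi(\vec{b}_1) \neq \nabla\phi(\vec{b}_2)$. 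Hence the map is bijective and the second bullet holds, completing the verification.

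I do not anticipate a genuine obstacle: the argument is essentially a rewriting plus one appeal to strict monotonicity of $\nabla\phi$. The only point deserving care is making sure the hypotheses are used exactly as needed — strict convexity for injectivity of $\nabla\phi$ (hence uniqueness of the minimizer and the injectivity half of the second condition), and the range condition $\range(\nabla\phi) = \reals^m$ for the surjectivity half.
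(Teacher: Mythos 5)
Your proposal is correct and follows essentially the same route as the paper: both compute $\nabla_{\vec z}D_\phi(\vec z,\vec b)=\nabla\phi(\vec z)-\nabla\phi(\vec b)$ and deduce bijectivity of the map $\vec b\mapsto\nabla\phi(\vec z)-\nabla\phi(\vec b)$ from the hypotheses on $\nabla\phi$. The paper simply writes down the inverse map $\vec v\mapsto[\nabla\phi]^{-1}(\nabla\phi(\vec z)-\vec v)$ directly, whereas you spell out the injectivity (via strict monotonicity of $\nabla\phi$) and surjectivity (via the range assumption) that justify its existence — a useful bit of extra detail, but not a different argument.
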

\begin{proof}
The Bregman divergence is always convex in its first argument. The gradient of the Bregman divergence with respect to the first argument is
\begin{equation*}
    \nabla_{\vec{z}}D_\phi(\vec{z}, \vec{b}) = \nabla \phi(\vec{z}) - \nabla \phi(\vec{b}),
\end{equation*}
with, for fixed $\vec{z} \in \reals^m$, the inverse map
\begin{equation*}
    \vec{v} \mapsto [\nabla \phi]^{-1}(\nabla \phi(\vec{z}) - \vec{v}).
\end{equation*}
\hfill$\square$\end{proof}

An example of a Bregman Divergence that satisfies the conditions of Proposition \ref{prop:bregman_divergence} is as follows. Let $\phi(\vec{x}) = \frac{1}{2}\|\vec{x}\|^2_{\Sigma}$ for symmetric positive definite matrix $\Sigma$, then we recover the squared Euclidean distance $D_\phi(\vec{z}, \vec{b}) = \frac{1}{2}\|\vec{z} - \vec{b}\|^2_{\Sigma}$. Another general class of smooth data fidelity functions is the following.

\begin{proposition}\label{prop:smooth_symmetric}
Let $\psi : \reals^m \rightarrow \reals$ be differentiable and strictly convex such that $\nabla \psi : \reals^m \rightarrow \reals^m$ is bijective, then $D(\vec{z}, \vec{b}) = \psi(\vec{z} - \vec{b})$ is a smooth data fidelity function. 
\end{proposition}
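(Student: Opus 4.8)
The plan is simply to verify the two defining properties of a smooth data fidelity function (Definition \ref{def:smooth_data_fidelity}) for $D(\vec{z},\vec{b}) = \psi(\vec{z}-\vec{b})$, exploiting that translation is an affine bijection of $\reals^m$ that preserves both strict convexity and bijectivity.

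First I would fix $\vec{b} \in \reals^m$ and observe that the map $\vec{z} \mapsto \psi(\vec{z}-\vec{b})$ is the composition of $\psi$ with the affine bijection $T_{\vec{b}}\colon \vec{z} \mapsto \vec{z}-\vec{b}$. Since composing a strictly convex function with an injective affine map yields a strictly convex function, and $\psi$ is differentiable, the map $\vec{z} \mapsto D(\vec{z},\vec{b})$ is strictly convex and differentiable. This establishes the first bullet of Definition \ref{def:smooth_data_fidelity}.

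Next I would compute, via the chain rule, that $\nabla_{\vec{z}} D(\vec{z},\vec{b}) = \nabla\psi(\vec{z}-\vec{b})$. For fixed $\vec{z} \in \reals^m$, the map $\vec{b}\mapsto \nabla\psi(\vec{z}-\vec{b})$ is the composition of the affine bijection $\vec{b}\mapsto \vec{z}-\vec{b}$ of $\reals^m$ with the map $\nabla\psi$, which is bijective by hypothesis. A composition of bijections is a bijection, so $\vec{b}\mapsto \nabla_{\vec{z}}D(\vec{z},\vec{b})$ is bijective, giving the second bullet. Hence $D$ is a smooth data fidelity function.

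There is no real obstacle in this argument; it is essentially a bookkeeping check against the definition. The only point requiring a moment of care is noting that an affine reparametrization (here a pure translation, which happens to also be its own inverse up to sign) preserves strict convexity, differentiability, and bijectivity, so that all properties of $\psi$ transfer verbatim to $D(\cdot,\vec{b})$ and $\nabla_{\vec z}D(\vec z,\cdot)$.
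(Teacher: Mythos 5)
Your proof is correct: both bullets of Definition \ref{def:smooth_data_fidelity} follow exactly as you argue, since translation preserves strict convexity and differentiability, and $\vec{b}\mapsto\nabla\psi(\vec{z}-\vec{b})$ is a composition of bijections. The paper actually states Proposition \ref{prop:smooth_symmetric} without proof, but your verification is precisely the intended routine check, parallel to the paper's proof of Proposition \ref{prop:bregman_divergence} (compute the gradient, exhibit the inverse map).
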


An example of a function $\psi$ satisfying the conditions of Proposition \ref{prop:smooth_symmetric} are $\phi(\vec{x}) = \|\vec{x}\|_q^q$ for $q > 1$ resulting in $D(\vec{z}, \vec{b}) = \|\vec{z} - \vec{b}\|_q^q$.

We are interested in studying the well-posedness, i.e., existence, uniqueness and continuity of the solution, of the optimization problem
\begin{equation}\label{eq:regularized_linear_regression}
    \argmin_{\vec{x} \in \reals^n} \left\{D(A\vec{x},\vec{b}) + f(\vec{x})\right\}.
\end{equation}
We therefore define the solution map to this problem as follows.

\begin{definition}[solution map]\label{def:solution_set_map}
Let $D : \reals^m \times \reals^m \rightarrow \reals$ be any function, $A\in \reals^{m\times n}$ be a linear operator and $f:\reals^n \rightarrow \reals \cup \{\infty\}$ be a proper convex lower semi-continuous function. Define the corresponding solution map $\set{S}^D_{A,f} : \reals^{m} \rightarrow \mathcal{P}(\reals^n)$ by
\begin{equation*}
    \set{S}^D_{A,f}(\vec{b}) := \argmin_{\vec{x} \in \reals^n} \left\{D(A\vec{x},\vec{b}) + f(\vec{x})\right\}.
\end{equation*}
\end{definition}

The well-posedness of \eqref{eq:regularized_linear_regression} can be written in terms of the solution map as follows:
\begin{enumerate}
    \item Existence:  $\#\set{S}^D_{A,f}(\vec{b}) \geq 1$ for all $\vec{b} \in \reals^m$,
    \item Uniqueness: $\#\set{S}^D_{A,f}(\vec{b}) \leq 1$ for all $\vec{b} \in \reals^m$,
    \item Continuity: $\vec{b} \mapsto \set{S}^D_{A,f}(\vec{b})$ is a single-valued and continuous map.
\end{enumerate}

Each of these conditions will be discussed separately in the remainder of this section.

\subsection{Existence}\label{subsec:existence}

A general condition for the existence of solutions can be derived that does not depend on the piecewise linear structure of the
regularization function.
\begin{lemma}[range condition for general regularization]\label{lem:range_condition}
Let $f:\reals^n \rightarrow \reals \cup \{\infty\}$ be a proper convex lower semi-continuous function and $D : \reals^m \times \reals^m \rightarrow \reals$ a smooth data fidelity function. Then, 
\begin{equation*}
    \vec{x} \in \range(\set{S}^D_{A,f}) \quad \ntext{if and only if} \quad \row(A) \cap \partial f(\vec{x}) \neq \emptyset.
\end{equation*}
\end{lemma}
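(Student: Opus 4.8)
The plan is to characterize membership in the range of the solution map via Fermat's rule and then exploit the bijectivity built into the definition of a smooth data fidelity function. Recall that $\vec{x} \in \range(\set{S}^D_{A,f})$ means there exists some $\vec{b} \in \reals^m$ with $\vec{x} \in \set{S}^D_{A,f}(\vec{b})$, i.e., $\vec{x}$ minimizes $\vec{z} \mapsto D(A\vec{z},\vec{b}) + f(\vec{z})$. Since $D(\cdot,\vec{b})$ is convex and differentiable and $f$ is proper convex lsc, the sum rule for subdifferentials applies (the differentiable term contributes no constraint-qualification issues), so Fermat's rule gives that $\vec{x}$ is a minimizer if and only if $\vec{0} \in A^T\nabla_{\vec{z}}D(A\vec{x},\vec{b}) + \partial f(\vec{x})$, equivalently $-A^T\nabla_{\vec{z}}D(A\vec{x},\vec{b}) \in \partial f(\vec{x})$. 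Writing $\vec{u} := -\nabla_{\vec{z}}D(A\vec{x},\vec{b}) \in \reals^m$, this says $A^T\vec{u} \in \partial f(\vec{x})$, and $A^T\vec{u}$ is by definition an element of $\row(A)$.

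For the forward direction, this already shows that if $\vec{x} \in \range(\set{S}^D_{A,f})$, then $A^T\vec{u} \in \row(A) \cap \partial f(\vec{x})$ for the particular $\vec{u}$ above, so the intersection is nonempty. For the converse, suppose $\vec{v} \in \row(A) \cap \partial f(\vec{x})$, say $\vec{v} = A^T\vec{u}$ for some $\vec{u} \in \reals^m$. We must produce a $\vec{b}$ making $\vec{x}$ a minimizer, i.e., we need $\nabla_{\vec{z}}D(A\vec{x},\vec{b}) = -\vec{u}$. Here is where the second defining property of a smooth data fidelity function enters: with $\vec{z} := A\vec{x}$ fixed, the map $\vec{b} \mapsto \nabla_{\vec{z}}D(A\vec{x},\vec{b})$ is a bijection of $\reals^m$ onto $\reals^m$, so there is a (unique) $\vec{b}$ with $\nabla_{\vec{z}}D(A\vec{x},\vec{b}) = -\vec{u}$. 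Then $-A^T\nabla_{\vec{z}}D(A\vec{x},\vec{b}) = A^T\vec{u} = \vec{v} \in \partial f(\vec{x})$, which is exactly Fermat's condition, so $\vec{x} \in \set{S}^D_{A,f}(\vec{b}) \subseteq \range(\set{S}^D_{A,f})$.

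The two directions together give the claimed equivalence. The main point to be careful about — though it is not really an obstacle — is justifying the sum rule for $\partial\big(D(A\cdot,\vec{b}) + f\big)$ at $\vec{x}$; this is immediate because $D(A\cdot,\vec{b})$ is finite and differentiable everywhere, so $\partial\big(D(A\cdot,\vec{b}) + f\big)(\vec{x}) = A^T\nabla_{\vec{z}}D(A\vec{x},\vec{b}) + \partial f(\vec{x})$ with no qualification needed, and strict convexity of $D(\cdot,\vec{b})$ is not even required for this lemma (it will matter later for uniqueness). I would also note explicitly that a minimizer may or may not exist for a given $\vec{b}$, but the lemma only concerns which $\vec{x}$ arise as minimizers for \emph{some} $\vec{b}$, and the construction above exhibits such a $\vec{b}$ directly.
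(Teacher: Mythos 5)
Your proposal is correct and follows essentially the same route as the paper's proof: both characterize membership in $\set{S}^D_{A,f}(\vec{b})$ via the optimality condition $-A^T\nabla_{\vec{z}}D(A\vec{x},\vec{b})\in\partial f(\vec{x})$ and then invert the map $\vec{b}\mapsto\nabla_{\vec{z}}D(A\vec{x},\vec{b})$ for the converse. Your added remarks on the sum rule and on strict convexity being unnecessary here are fine but not a substantive departure.
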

\begin{proof}
We have that $\vec{x} \in \set{S}^D_{A, f}(\vec{b})$ if and only if $-A^T \nabla_{\vec{z}} D(A\vec{x}, \vec{b}) \in \partial f(\vec{x})$. Thus, if there exists a $\vec{b} \in \reals^m$ for which $\vec{x} \in \set{S}^D_{A, f}(\vec{b})$, then $-A^T \nabla_{\vec{z}}  D(A\vec{x}, \vec{b}) \in \row(A) \cap \partial f(\vec{x})$. For the converse, let $\vec{z} \in \reals^m$ such that $A^T\vec{z} \in \partial f(\vec{x})$. Choose $b = [\nabla_{\vec{z}}  D(A\vec{x}, \cdot)]^{-1}(-\vec{z})$, then $-A^T \nabla_{\vec{z}}  D(A\vec{x}, \vec{b}) \in \partial f(\vec{x})$.
\hfill$\square$\end{proof}

In the case that the regularization function is convex piecewise linear, then the condition that the row space of $A$ intersects a subdifferential extends to the whole set on which the subdifferential is constant. Thus, Proposition \ref{lem:range_condition} implies the following.
\begin{proposition}[range condition for convex piecewise linear regularization]\label{prop:range_condition_piecewise_linear}
Let $f:\reals^n \rightarrow \reals \cup \{\infty\}$ be a convex piecewise linear function and $D : \reals^m \times \reals^m \rightarrow \reals$ a smooth data fidelity function. For any $\set{F} \in \complex{F}_f$ we have 
\begin{equation*}
    \set{F} \subseteq \range(\set{S}^D_{A,f}) \quad \ntext{if and only if} \quad \row(A) \cap \partial f_{\set{F}} \neq \emptyset.
\end{equation*} 
\end{proposition}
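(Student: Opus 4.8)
The plan is to reduce this proposition to Lemma~\ref{lem:range_condition} by showing that the range condition $\row(A) \cap \partial f(\vec{x}) \neq \emptyset$ is constant as $\vec{x}$ ranges over a single cell $\set{F} \in \complex{F}_f$, and more precisely that it only depends on the constant subdifferential $\partial f_{\set{F}}$ attained on $\relint(\set{F})$. First I would recall from Proposition~\ref{prop:constant_subdifferential} that $\partial f(\vec{x}) = \partial f_{\set{F}}$ for every $\vec{x} \in \relint(\set{F})$, and that for a general $\vec{x} \in \set{F}$ (possibly on the relative boundary), $\vec{x}$ lies in the relative interior of some face $\set{H}$ of $\set{F}$, so that $\partial f(\vec{x}) = \partial f_{\set{H}}$, which by Lemma~\ref{lemma:dual_inclusion} is a face of $\partial f_{\set{F}}$ (since $\set{H}$ is a face of $\set{F}$). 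In particular $\partial f_{\set{F}} \subseteq \partial f(\vec{x})$ for all $\vec{x}\in\set{F}$, because a polyhedral set is contained in each of its own normal-cone-style ``parents''—more concretely, $\partial f_{\set{F}}$ is a face of $\partial f_{\set{H}}=\partial f(\vec{x})$, hence a subset of it.

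With this in hand the two implications are straightforward. For the forward direction, suppose $\set{F} \subseteq \range(\set{S}^D_{A,f})$. Pick any $\vec{x} \in \relint(\set{F})$; by Lemma~\ref{lem:range_condition}, $\row(A) \cap \partial f(\vec{x}) \neq \emptyset$, and since $\partial f(\vec{x}) = \partial f_{\set{F}}$ we get $\row(A) \cap \partial f_{\set{F}} \neq \emptyset$. For the converse, suppose $\row(A) \cap \partial f_{\set{F}} \neq \emptyset$. Then for \emph{every} $\vec{x} \in \set{F}$ we have $\partial f_{\set{F}} \subseteq \partial f(\vec{x})$ by the inclusion established above, so $\row(A) \cap \partial f(\vec{x}) \supseteq \row(A) \cap \partial f_{\set{F}} \neq \emptyset$, and Lemma~\ref{lem:range_condition} gives $\vec{x} \in \range(\set{S}^D_{A,f})$. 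Hence $\set{F} \subseteq \range(\set{S}^D_{A,f})$.

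The only delicate point—the ``main obstacle''—is the claim that $\partial f_{\set{F}} \subseteq \partial f(\vec{x})$ for all $\vec{x}\in\set{F}$, including the relative boundary. This needs the containment-reversing duality between faces of $\set{F}$ and faces of $\partial f_{\set{F}}$ from Lemma~\ref{lemma:dual_inclusion}: if $\vec{x}\in\relint(\set{H})$ with $\set{H}$ a face of $\set{F}$, then $\partial f_{\set{F}}$ is a face of $\partial f_{\set{H}} = \partial f(\vec{x})$, hence in particular $\partial f_{\set{F}}\subseteq\partial f(\vec{x})$. One should double-check the boundary case $\vec{x}\in\set{F}$ where $\set{H}=\set{F}$ itself, which is trivial, and note that the subdifferential is nonempty on all of $\dom(f)\supseteq\set{F}$ since $f$ is convex piecewise linear and $\set{F}$ lies in $\dom(f)$. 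I would write this up in three or four sentences, invoking Proposition~\ref{prop:constant_subdifferential}, Lemma~\ref{lemma:dual_inclusion}, and Lemma~\ref{lem:range_condition}, without re-deriving any normal-cone computations.
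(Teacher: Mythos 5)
Your proposal is correct and follows essentially the same route as the paper: both reduce to Lemma~\ref{lem:range_condition} on relative interiors via Proposition~\ref{prop:constant_subdifferential}, and both use the inclusion-reversing duality of Lemma~\ref{lemma:dual_inclusion} to get $\partial f_{\set{F}} \subseteq \partial f_{\set{H}}$ for faces $\set{H}$ of $\set{F}$, together with the fact that $\set{F}$ is the disjoint union of the relative interiors of its faces. Your write-up is a bit more explicit about the boundary case, but the argument is the same.
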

\begin{proof}
By Lemma \ref{lem:range_condition}, $\row(A) \cap \partial f_{\set{F}} \neq \emptyset$ if and only if $\relint(\set{F}) \subseteq \range(\set{S}_{A,f})$. If $\set{H} \in \complex{F}_f$ is a face of $\set{F}$, then by duality also $\row(A) \cap \partial f_{\set{H}} \supseteq \row(A) \cap \partial f_{\set{F}} \neq \emptyset$. Therefore, also $\relint(\set{H}) \subseteq \range(\set{S}_{A,f})$. Because $\set{F}$ is the union of the relative interiors of all of its faces, we obtain $\row(A) \cap \partial f_{\set{F}} \neq \emptyset$ if and only if $F \subseteq \range(\set{S}_{A,f})$.
\hfill$\square$\end{proof}

As a corollary, if part of a set $\set{F} \in \complex{F}_f$ is in the range of the solution map, then the whole set $F$ is in the range of the solution map, resulting in the following.
\begin{corollary}[range of solution set]\label{cor:range_solution_set}
Let $f:\reals^n \rightarrow \reals \cup \{\infty\}$ be a convex piecewise linear function and $D : \reals^m \times \reals^m \rightarrow \reals$ a smooth data fidelity function. Then, there exists a polyhedral subcomplex $\complex{E} \subseteq \complex{F}_f$, such that 
\begin{equation*}
    \range(\set{S}^D_{A, f}) = \bigcup_{\set{F} \in \complex{E}}\set{F}.
\end{equation*}
\end{corollary}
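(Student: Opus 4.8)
The plan is to build the subcomplex $\complex{E}$ directly from the sets $\set{F} \in \complex{F}_f$ that happen to lie in the range of the solution map, and then verify that this collection is in fact a polyhedral subcomplex whose union is exactly $\range(\set{S}^D_{A,f})$. Concretely, I would define
\begin{equation*}
    \complex{E} := \{\set{F} \in \complex{F}_f\,|\, \row(A) \cap \partial f_{\set{F}} \neq \emptyset\},
\end{equation*}
which by Proposition \ref{prop:range_condition_piecewise_linear} is the same as $\{\set{F} \in \complex{F}_f\,|\, \set{F} \subseteq \range(\set{S}^D_{A,f})\}$.

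First I would show that $\complex{E}$ is a polyhedral subcomplex of $\complex{F}_f$. Since every element of $\complex{E}$ is an element of the polyhedral complex $\complex{F}_f$, the only thing to check is closure under taking faces: if $\set{F} \in \complex{E}$ and $\set{H}$ is a non-empty face of $\set{F}$, then $\set{H} \in \complex{F}_f$ and, by Lemma \ref{lemma:dual_inclusion}, $\partial f_{\set{F}}$ is a face of $\partial f_{\set{H}}$, so $\row(A) \cap \partial f_{\set{H}} \supseteq \row(A) \cap \partial f_{\set{F}} \neq \emptyset$, giving $\set{H} \in \complex{E}$. (The intersection condition in Definition \ref{def:polyhedral_complex} is inherited automatically from $\complex{F}_f$.) This is essentially the face-monotonicity argument already used inside the proof of Proposition \ref{prop:range_condition_piecewise_linear}.

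Next I would establish the set equality $\range(\set{S}^D_{A,f}) = \bigcup_{\set{F}\in\complex{E}}\set{F}$. The inclusion $\supseteq$ is immediate: each $\set{F}\in\complex{E}$ satisfies $\set{F}\subseteq\range(\set{S}^D_{A,f})$ by Proposition \ref{prop:range_condition_piecewise_linear}. For $\subseteq$, take any $\vec{x}\in\range(\set{S}^D_{A,f})$. Then $\vec{x}$ lies in the relative interior of a unique $\set{F}\in\complex{F}_f$ (the primal complex covers $\dom(f)$, hence contains $\vec{x}$, and the relative interiors of its members partition $\dom(f)$ by Proposition \ref{prop:constant_subdifferential}). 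By Lemma \ref{lem:range_condition}, $\row(A) \cap \partial f(\vec{x}) \neq \emptyset$, and since $\partial f(\vec{x}) = \partial f_{\set{F}}$ for $\vec{x}\in\relint(\set{F})$, we get $\row(A)\cap\partial f_{\set{F}}\neq\emptyset$, i.e.\ $\set{F}\in\complex{E}$, so $\vec{x}\in\set{F}\subseteq\bigcup_{\set{F}\in\complex{E}}\set{F}$.

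I do not expect a serious obstacle here; the statement is essentially a repackaging of Proposition \ref{prop:range_condition_piecewise_linear} together with the fact that the relative interiors of members of $\complex{F}_f$ cover $\dom(f)$. The one point deserving a little care is making sure the covering/partition claim is cleanly justified — that every point of $\dom(f)$ lies in the relative interior of exactly one member of $\complex{F}_f$ — which follows from the construction of $\complex{F}_f$ as the projected face complex of $\epi(f)$ and from Proposition \ref{prop:constant_subdifferential}; alternatively one can sidestep relative interiors entirely by noting $\range(\set{S}^D_{A,f}) = \bigcup_{\vec{x}\in\range(\set{S}^D_{A,f})} \set{F}(\vec{x})$ where $\set{F}(\vec{x})$ is any member of $\complex{F}_f$ containing $\vec{x}$, and then invoking Proposition \ref{prop:range_condition_piecewise_linear} in both directions.
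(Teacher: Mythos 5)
Your proposal is correct and follows essentially the same route as the paper: the corollary is stated there as an immediate consequence of Proposition \ref{prop:range_condition_piecewise_linear}, with the subcomplex being exactly the accessible sets $\{\set{F} \in \complex{F}_f \,|\, \row(A) \cap \partial f_{\set{F}} \neq \emptyset\}$ and closure under faces following from Lemma \ref{lemma:dual_inclusion} just as you argue. Your extra care in spelling out the covering of $\range(\set{S}^D_{A,f})$ by relative interiors is a reasonable elaboration of what the paper leaves implicit.
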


A set $\set{F} \in \complex{F}_f$ for which the condition $\set{F} \subseteq \range(\set{S}^D_{A,f})$ holds is referred to accessible and has been studied in more detail for certain polyhedral norms in \cite{schneider2022geometry,tardivel2021geometry}.

\subsection{Uniqueness}\label{subsec:uniqueness}

Although multiple solutions for fixed $\vec{b}$ can behave very differently, they are still similar in the following way.
\begin{lemma}\label{lem:equal_fit}
Let $f:\reals^n \rightarrow \reals \cup \{\infty\}$ be a proper convex lower semi-continuous function and let $D: \reals^m\times \reals^m \rightarrow \reals$ be strictly convex in its first argument.
For every $\vec{b} \in \reals^m$, if $\vec{x}, \vec{y} \in \set{S}^D_{A,f}(\vec{b})$, then $A\vec{x} = A\vec{y}$ and $f(\vec{x}) = f(\vec{y})$.
\end{lemma}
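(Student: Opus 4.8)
The plan is to exploit convexity of the objective $g(\vec{x}) := D(A\vec{x}, \vec{b}) + f(\vec{x})$ together with the strict convexity of $D$ in its first argument. Fix $\vec{b}$ and suppose $\vec{x}, \vec{y} \in \set{S}^D_{A,f}(\vec{b})$, with common optimal value $m^\star := g(\vec{x}) = g(\vec{y})$. Since $\vec{x}$ and $\vec{y}$ are minimizers and $f$ is proper, both $f(\vec{x})$ and $f(\vec{y})$ must be finite (and $D$ is real-valued by assumption), so every quantity appearing below is finite.

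First I would consider the midpoint $\vec{z} := \tfrac{1}{2}(\vec{x} + \vec{y})$. Convexity of $\vec{x}\mapsto D(A\vec{x}, \vec{b})$, as a composition of the convex map $D(\cdot,\vec{b})$ with the linear map $A$, together with convexity of $f$, gives
\begin{equation*}
    g(\vec{z}) \le \tfrac{1}{2} g(\vec{x}) + \tfrac{1}{2} g(\vec{y}) = m^\star .
\end{equation*}
Since $m^\star$ is the minimal value of $g$, we must have $g(\vec{z}) = m^\star$, so the displayed inequality is in fact an equality. Because $g$ is the sum of the two convex functions $\vec{x}\mapsto D(A\vec{x},\vec{b})$ and $f$, and each of them separately satisfies the midpoint inequality, equality in the sum forces equality in each summand:
\begin{equation*}
    D\!\left(\tfrac{1}{2} A\vec{x} + \tfrac{1}{2} A\vec{y},\, \vec{b}\right) = \tfrac{1}{2} D(A\vec{x},\vec{b}) + \tfrac{1}{2} D(A\vec{y},\vec{b}), \qquad f(\vec{z}) = \tfrac{1}{2} f(\vec{x}) + \tfrac{1}{2} f(\vec{y}).
\end{equation*}

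Next I would invoke the strict convexity of $D(\cdot, \vec{b})$: if $A\vec{x} \neq A\vec{y}$, then the first of the two equalities above would have to be a strict inequality, a contradiction. Hence $A\vec{x} = A\vec{y}$. Substituting this into $g(\vec{x}) = g(\vec{y})$ and cancelling the now-equal data-fidelity terms $D(A\vec{x},\vec{b}) = D(A\vec{y},\vec{b})$ yields $f(\vec{x}) = f(\vec{y})$, which completes the argument. I do not expect a genuine obstacle here; the only point requiring a little care is the legitimacy of splitting the equality across the two convex summands and the bookkeeping ensuring that no term is $+\infty$, and both follow because $\vec{x}$ and $\vec{y}$ are minimizers of a proper objective.
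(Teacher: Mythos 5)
Your proposal is correct and follows essentially the same route as the paper's proof: both compare the midpoint $\tfrac{1}{2}(\vec{x}+\vec{y})$ against the optimal value and use strict convexity of $D(\cdot,\vec{b})$ to rule out $A\vec{x}\neq A\vec{y}$, after which $f(\vec{x})=f(\vec{y})$ follows from equality of the objective values. Your version is slightly more explicit about splitting the equality across the two convex summands, but the underlying argument is identical.
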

\begin{proof}
Assume that $\vec{x} \neq \vec{y}$ and $A\vec{x}\neq A\vec{y}$. By strict convexity of $D(\cdot, \vec{b})$ we have that for $\vec{z} = \frac{1}{2}(\vec{x} + \vec{y})$,
\begin{equation*}
    D(\vec{z}, \vec{b}) + f(\vec{z}) < \frac{1}{2}\left[D(A\vec{x}, \vec{b}) + f(\vec{x}) + D(A\vec{y}, \vec{b}) + f(\vec{y})\right] = \min_{\vec{v}\in\reals^n}\left\{D(A\vec{v}, \vec{b}) + f(\vec{v}) \right\}.
\end{equation*}
This contradicts $\vec{x}, \vec{y} \in \set{S}^D_{A, f}(\vec{b})$.

Because $\vec{x}, \vec{y} \in \set{S}^D_{A, f}(\vec{b})$ and $A\vec{x} = A\vec{y}$, we get $f(\vec{x}) = f(\vec{y})$.
\hfill$\square$\end{proof}

Most work for proving necessary and sufficient conditions for uniqueness has already been done in Subsection \ref{subsec:level_sets_linear_subspaces}, specifically in Propositions \ref{prop:finding_two_solutions} and \ref{prop:two_solutions_large_codimension}.

\begin{theorem}(Necessary condition for uniqueness)\label{thm:nec_uniqueness}
Let $D:\reals^m\times\reals^m \rightarrow \reals$ be arbitrary.
If there exist $\vec{x} \in \set{S}_{A, f}^{D}(\vec{b})$ for some $\vec{b} \in \reals^m$ and let $\set{F} \in \complex{F}_f$ containing $\vec{x}$ such that $\codim(\partial f_{\set{F}}) - \Rank(A) = k > 0$, then $\dim(\set{S}_{A, f}^{D}(\vec{b})) \geq k$.

Thus, if $\#[\set{F} \cap \set{S}_{A, f}^{D}(\vec{b})] \leq 1$ for all $\vec{b} \in \reals^m$ and $\set{F} \in \complex{F}_f$, then $\#[\set{F} \cap \set{S}_{A, f}^{D}(\vec{b})] = 1$ must imply $\codim(\partial f_{\set{F}}) \leq \Rank(A).$
\end{theorem}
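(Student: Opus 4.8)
The plan is to reduce the first assertion almost entirely to Proposition~\ref{prop:finding_two_solutions}, which has already done the geometric work, and then to read off the second assertion as its contrapositive. First I would fix $\vec{b} \in \reals^m$, a minimizer $\vec{x} \in \set{S}_{A,f}^{D}(\vec{b})$, and a set $\set{F} \in \complex{F}_f$ with $\vec{x} \in \set{F}$ and $k := \codim(\partial f_{\set{F}}) - \Rank(A) > 0$. Applying Proposition~\ref{prop:finding_two_solutions} with this $\vec{x}$ and $\set{F}$ produces a polyhedral set $\set{S}$ with $\vec{x} \in \set{S} \subseteq \set{L} \subseteq \set{F}$, where $\set{L} \in \complex{L}_f(f(\vec{x}))$, together with the two crucial properties $\dim(\set{S}) = k$ and $\set{S} - \set{S} \subseteq \nullspace(A)$. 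The goal is then to show $\set{S} \subseteq \set{S}_{A,f}^{D}(\vec{b})$, since $\dim(\set{S}) = k$ immediately gives $\dim(\set{S}_{A,f}^{D}(\vec{b})) \geq k$ (the affine hull of $\set{S}$ is contained in that of $\set{S}_{A,f}^{D}(\vec{b})$).

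To prove $\set{S} \subseteq \set{S}_{A,f}^{D}(\vec{b})$, I would take an arbitrary $\vec{y} \in \set{S}$ and check that it attains the same objective value as $\vec{x}$. Since $\vec{y} - \vec{x} \in \set{S} - \set{S} \subseteq \nullspace(A)$, we have $A\vec{y} = A\vec{x}$, hence $D(A\vec{y},\vec{b}) = D(A\vec{x},\vec{b})$; here the only thing used about $D$ is that it sees $\vec{x}$ only through $A\vec{x}$, which is built into the problem. Since $\set{S} \subseteq \set{L}$ and $\set{L}$ lies in the level set $\{\vec{z} \in \reals^n \mid f(\vec{z}) = f(\vec{x})\}$, we have $f(\vec{y}) = f(\vec{x})$. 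Adding the two equalities shows $D(A\vec{y},\vec{b}) + f(\vec{y}) = D(A\vec{x},\vec{b}) + f(\vec{x})$, which is the optimal value because $\vec{x}$ is a minimizer; therefore $\vec{y} \in \set{S}_{A,f}^{D}(\vec{b})$. This establishes the first statement.

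For the second statement I would argue by contraposition: assume $\#[\set{F} \cap \set{S}_{A,f}^{D}(\vec{b})] \leq 1$ for all $\vec{b}$ and $\set{F}$, and suppose for contradiction that for some $\set{F}$ and $\vec{b}$ we have $\#[\set{F} \cap \set{S}_{A,f}^{D}(\vec{b})] = 1$ yet $\codim(\partial f_{\set{F}}) > \Rank(A)$. Writing $\{\vec{x}\} = \set{F} \cap \set{S}_{A,f}^{D}(\vec{b})$ and $k = \codim(\partial f_{\set{F}}) - \Rank(A) \geq 1$, the construction above yields $\set{S} \subseteq \set{F}$ with $\set{S} \subseteq \set{S}_{A,f}^{D}(\vec{b})$ and $\dim(\set{S}) = k \geq 1$, so $\set{S}$ contains at least two distinct points of $\set{F} \cap \set{S}_{A,f}^{D}(\vec{b})$, contradicting the assumed bound. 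Hence $\#[\set{F} \cap \set{S}_{A,f}^{D}(\vec{b})] = 1$ forces $\codim(\partial f_{\set{F}}) \leq \Rank(A)$.

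I do not expect a genuine obstacle here: the proof is essentially a bookkeeping assembly of Proposition~\ref{prop:finding_two_solutions}. The one point I would be careful to flag explicitly is that no convexity or strict convexity of $D$ is invoked — only the fact that the data term is evaluated at $A\vec{x}$ and that the objective value is therefore constant along any subset of a level set of $f$ that also lies in an affine subspace parallel to $\nullspace(A)$ — which is exactly why the hypothesis on $D$ can be left arbitrary.
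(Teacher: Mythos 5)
Your proposal is correct and matches the paper's argument: the paper's proof of this theorem is literally ``Follows directly from Proposition~\ref{prop:finding_two_solutions},'' and you have simply spelled out that reduction (the set $\set{S}$ lies in a level set of $f$ and in a fiber of $A$, so every point of $\set{S}$ attains the optimal objective value, giving $\dim(\set{S}_{A,f}^{D}(\vec{b}))\geq k$, with the second claim as the contrapositive). Your observation that no convexity of $D$ is needed is exactly why the theorem can state that $D$ is arbitrary.
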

\begin{proof}
Follows directly from Proposition \ref{prop:finding_two_solutions}.
\hfill$\square$\end{proof}

\begin{theorem}[sufficient condition for uniqueness]\label{thm:suf_uniqueness}
Let $D: \reals^m\times\reals^m \rightarrow \reals$ be strictly convex and differentiable in its first argument and $\dim(\dom(f)) = n$. If $\#\set{S}_{A, f}^{D}(\vec{b}) > 1$ for some $\vec{b} \in \reals^m$, then there exists $\set{F} \in \complex{F}_f$ such that $\row(A) \cap \partial f_{\set{F}}$ and $\codim(\partial f_{\set{F}}) > \Rank(A)$.
\end{theorem}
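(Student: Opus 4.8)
The plan is to assume $\#\set{S}_{A,f}^D(\vec{b}) > 1$ for some $\vec{b}$ and produce the desired $\set{F} \in \complex{F}_f$ by reducing to Proposition \ref{prop:two_solutions_large_codimension}. The only gap between the hypothesis of that proposition and the hypothesis here is that the proposition requires two distinct points $\vec{x}, \vec{y}$ with $\vec{x} - \vec{y} \in \nullspace(A)$ \emph{and} $\row(A) \cap \partial f(\vec{x}) \cap \partial f(\vec{y}) \neq \emptyset$, whereas from non-uniqueness alone we immediately get only the first of these. So the heart of the proof is to upgrade ``two solutions'' into ``two solutions with a common subgradient in $\row(A)$''.

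First I would take $\vec{x}, \vec{y} \in \set{S}_{A,f}^D(\vec{b})$ distinct. By Lemma \ref{lem:equal_fit} (using that $D$ is strictly convex in its first argument) we get $A\vec{x} = A\vec{y}$, hence $\vec{x} - \vec{y} \in \nullspace(A)$, and $f(\vec{x}) = f(\vec{y})$. Next I would invoke the Fermat/optimality characterization from Section \ref{sec:convex_piecewise_linear_functions} together with the differentiability of $D$: $\vec{x}$ being a minimizer means $-A^T \nabla_{\vec{z}} D(A\vec{x}, \vec{b}) \in \partial f(\vec{x})$, and likewise $-A^T \nabla_{\vec{z}} D(A\vec{y}, \vec{b}) \in \partial f(\vec{y})$. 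Since $A\vec{x} = A\vec{y}$, these two subgradients are the \emph{same} vector, call it $\vec{g} := -A^T \nabla_{\vec{z}} D(A\vec{x}, \vec{b})$. This $\vec{g}$ lies in $\row(A)$ by construction, and it lies in both $\partial f(\vec{x})$ and $\partial f(\vec{y})$. Hence $\row(A) \cap \partial f(\vec{x}) \cap \partial f(\vec{y}) \neq \emptyset$.

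Now all hypotheses of Proposition \ref{prop:two_solutions_large_codimension} are met: $f$ is convex piecewise linear with $\dim(\dom(f)) = n$, and there exist distinct $\vec{x}, \vec{y}$ with $\vec{x} - \vec{y} \in \nullspace(A)$ and $\row(A) \cap \partial f(\vec{x}) \cap \partial f(\vec{y}) \neq \emptyset$. Applying it yields $\set{F} \in \complex{F}_f$ with $\vec{x}, \vec{y} \in \set{F}$ and $\codim(\partial f_{\set{F}}) > \Rank(A)$. Finally, this same $\set{F}$ satisfies $\row(A) \cap \partial f_{\set{F}} \neq \emptyset$: since $\vec{x} \in \set{F}$, by Proposition \ref{prop:constant_subdifferential} and the face structure (Lemma \ref{lemma:dual_inclusion}) $\partial f_{\set{F}}$ is a face of $\partial f(\vec{x})$, and in fact one can take $\set{F}$ to be the cell with $\vec{x} \in \relint(\set{F})$ so that $\partial f_{\set{F}} = \partial f(\vec{x}) \ni \vec{g} \in \row(A)$; alternatively just note the proof of Proposition \ref{prop:two_solutions_large_codimension} already selects $\partial f_{\set{F}}$ as a face meeting $\row(A)$. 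This gives exactly the conclusion ``$\row(A) \cap \partial f_{\set{F}} \neq \emptyset$ and $\codim(\partial f_{\set{F}}) > \Rank(A)$''.

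The main obstacle is the bookkeeping in the last step: making sure the $\set{F}$ delivered by Proposition \ref{prop:two_solutions_large_codimension} is simultaneously the one whose dual cell meets $\row(A)$ and the one with large codimension — but since that proposition's proof already constructs $\partial f_{\set{F}}$ as the smallest face of $\partial f(\vec{x}) \cap \partial f(\vec{y})$ meeting $\row(A)$, both properties hold for the same cell, so this is really just a matter of citing it carefully. Everything else is a direct chain of the already-established lemmas.
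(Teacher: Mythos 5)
Your proposal is correct and follows essentially the same route as the paper: use Lemma \ref{lem:equal_fit} to get $A\vec{x}=A\vec{y}$, observe that the two optimality conditions then yield a common subgradient in $\row(A)$, and invoke Proposition \ref{prop:two_solutions_large_codimension}. Your extra remark that the cell produced by that proposition is the one whose dual cell meets $\row(A)$ is a point the paper leaves implicit, and you resolve it correctly.
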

\begin{proof}
Let $\vec{x}, \vec{y} \in \set{S}_{A, f}^{D}(\vec{b})$ distinct, then by Lemma \ref{lem:equal_fit} $A\vec{x} = A\vec{y}$. Furthermore, by optimality we get
\begin{equation*}
    -A^T\nabla_{\vec{z}}D(A\vec{x}, \vec{b}) \in \partial f(\vec{x}) \quad \text{and} \quad -A^T\nabla_{\vec{z}}D(A\vec{y}, \vec{b}) \in \partial f(\vec{y}).
\end{equation*}
Therefore,
\begin{equation*}
    -A^T\nabla_{\vec{z}}D(A\vec{x}, \vec{b}) \in \partial f(\vec{x}) \quad \text{and} \quad -A^T\nabla_{\vec{z}}D(A\vec{x}, \vec{b}) \in \partial f(\vec{y}),
\end{equation*}
hence $\row(A)\cap \partial f(\vec{x}) \cap \partial f(\vec{y}) \neq \emptyset$. The conclusion follows directly from Proposition \ref{prop:two_solutions_large_codimension}.
\hfill$\square$\end{proof}

Combining Theorems \ref{thm:nec_uniqueness} and \ref{thm:suf_uniqueness} results in the the following necessary and sufficient conditions for uniqueness.
\begin{corollary}[Necessary and sufficient conditions for uniqueness]\label{cor:nec_suf_un}
Let $D : \reals^m \times \reals^m \rightarrow \reals$ be a smooth data fidelity function, $A\in \reals^{m\times n}$ be a linear operator and $f:\reals^n \rightarrow \reals \cup \{\infty\}$ be a convex piecewise linear function. If $\dim(\dom(f)) = n$, then $\#\set{S}_{A,f}^D(\vec{b}) = 1$ for all $\vec{b}\in\reals^m$ if and only if $\row(A) \cap \partial f_{\set{F}} \neq \emptyset$ implies $\codim(\partial f_{\set{F}}) \leq \Rank(A)$. That is, every accessible set $\set{F}$ must satisfy $\codim(\partial f_{\set{F}}) \leq \Rank(A)$.
\end{corollary}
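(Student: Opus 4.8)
The corollary is meant to be read off from Theorems~\ref{thm:nec_uniqueness} and~\ref{thm:suf_uniqueness} together with the range characterisation of Proposition~\ref{prop:range_condition_piecewise_linear}; no new geometry is needed, only some bookkeeping about accessible faces. The plan is to prove the two implications of the equivalence separately, each by contraposition, and then to address existence.

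\emph{Necessity of the condition.} Suppose some $\set{F} \in \complex{F}_f$ is accessible, i.e.\ $\row(A) \cap \partial f_{\set{F}} \neq \emptyset$, but $\codim(\partial f_{\set{F}}) > \Rank(A)$. By Proposition~\ref{prop:range_condition_piecewise_linear}, accessibility of $\set{F}$ forces $\set{F} \subseteq \range(\set{S}^D_{A,f})$, so there is a point $\vec{x} \in \set{F}$ and some $\vec{b} \in \reals^m$ with $\vec{x} \in \set{S}^D_{A,f}(\vec{b})$. Applying Theorem~\ref{thm:nec_uniqueness} to this $\vec{x}$, $\vec{b}$ and $\set{F}$, with $k = \codim(\partial f_{\set{F}}) - \Rank(A) \geq 1$, yields $\dim(\set{S}^D_{A,f}(\vec{b})) \geq k \geq 1$, so $\set{S}^D_{A,f}(\vec{b})$ is infinite and in particular not a singleton. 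Hence, if the solution map is everywhere single-valued, every accessible face must satisfy $\codim(\partial f_{\set{F}}) \leq \Rank(A)$.

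\emph{Sufficiency of the condition.} Conversely, suppose $\#\set{S}^D_{A,f}(\vec{b}) > 1$ for some $\vec{b}$. Theorem~\ref{thm:suf_uniqueness} applies, since $\dim(\dom(f)) = n$ and a smooth data fidelity function is, by Definition~\ref{def:smooth_data_fidelity}, strictly convex and differentiable in its first argument; it produces $\set{F} \in \complex{F}_f$ with $\row(A) \cap \partial f_{\set{F}} \neq \emptyset$, i.e.\ $\set{F}$ accessible, and $\codim(\partial f_{\set{F}}) > \Rank(A)$. This contradicts the assumed condition, so $\#\set{S}^D_{A,f}(\vec{b}) \leq 1$ for every $\vec{b}$.

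The two parts together give the equivalence between the codimension condition and ``$\#\set{S}^D_{A,f}(\vec{b}) \leq 1$ for all $\vec{b}$'', so the remaining task for the stated ``$=1$'' is existence of a minimiser for every $\vec{b}$ — and this is the step I expect to require the most care, since Theorems~\ref{thm:nec_uniqueness}--\ref{thm:suf_uniqueness} say nothing about it. I would obtain existence from coercivity of $\vec{z} \mapsto D(\vec{z},\vec{b})$, which holds for the concrete fidelities of Propositions~\ref{prop:bregman_divergence} and~\ref{prop:smooth_symmetric}: then $\vec{x} \mapsto D(A\vec{x},\vec{b}) + f(\vec{x})$ grows without bound off $\nullspace(A)$, so existence reduces to the absence of a direction in $\nullspace(A)$ along which $f$ decreases without bound, which is in turn linked, through Lemma~\ref{lem:range_condition}, to there being an accessible face at all. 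Apart from this bookkeeping, the corollary is immediate from the earlier results.
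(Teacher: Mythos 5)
Your treatment of the equivalence itself is exactly the paper's: the paper offers no separate proof of Corollary~\ref{cor:nec_suf_un} beyond the remark that it follows by combining Theorems~\ref{thm:nec_uniqueness} and~\ref{thm:suf_uniqueness}, and your two contrapositive implications (using Proposition~\ref{prop:range_condition_piecewise_linear} to produce a $\vec{b}$ witnessing accessibility in the necessity direction, and invoking Theorem~\ref{thm:suf_uniqueness} directly in the sufficiency direction) are precisely that combination, spelled out. For the ``at most one solution'' content of the statement your argument is complete and faithful to the paper.

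The point where you go beyond the paper --- existence --- is a legitimate observation, but your sketch does not close it and in fact cannot: the codimension condition can hold vacuously (no accessible face at all) while $\set{S}^D_{A,f}(\vec{b})$ is empty for every $\vec{b}$. Take $n=m=1$, $A=0$, $D(z,b)=\tfrac12(z-b)^2$ and $f(x)=-x$; then $\dim(\dom(f))=n$, the only subdifferential is $\{-1\}$, $\row(A)=\{0\}$ misses it, so the right-hand condition is satisfied, yet $x\mapsto \tfrac12 b^2 - x$ has no minimiser. Coercivity of $\vec{z}\mapsto D(\vec{z},\vec{b})$ only controls the objective transversally to $\nullspace(A)$, and nothing in the hypotheses rules out a direction in $\nullspace(A)$ along which $f$ is unbounded below. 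The paper itself never proves nonemptiness of $\set{S}^D_{A,f}(\vec{b})$ for each $\vec{b}$ (Lemma~\ref{lem:range_condition} characterises the range of the solution map, which is a different question), so the corollary should be read, consistently with its title and the subsection it sits in, as the uniqueness statement $\#\set{S}^D_{A,f}(\vec{b})\leq 1$ for all $\vec{b}$; if one insists on the literal ``$=1$'', an additional hypothesis guaranteeing existence (e.g.\ inf-compactness of the objective, or $f$ bounded below) must be added, and it cannot be derived from the stated assumptions as your final paragraph hopes.
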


\subsection{Continuity}\label{subsec:continuity}

If the solution map is single valued, then the final condition of well-posedness is continuity. In this work, we focus on the least squares error data fidelity function, to stick with the nature of previous proofs. For more general results on continuity, see \cite{scherzer2009variational}. To prove continuity of the solution map, we first show that the solution map is piecewise linear in the following lemma, followed by gluing together the pieces in a theorem.

\begin{lemma}[uniqueness implies piecewise linear]\label{lem:uniqueness_implies_piecewise_linear}
Let $D : \reals^m \times \reals^m \rightarrow \reals$ be $D(\vec{x}, \vec{y}) = \frac12\|\vec{x} - \vec{y}\|^2_{\Sigma}$, $A \in \reals^{m\times n}$ be a linear operator and $f:\reals^n \rightarrow \reals \cup \{\infty\}$ be a convex piecewise linear function. If $\set{S}^D_{A, f}$ is single-valued, then for all $\set{F} \in \complex{F}_f$, the inverse image $[S^D_{A,f}]^{-1}(\set{F})$ is closed and convex and the restricted solution map $S^D_{A, f}\big|_{[S^D_{A,f}]^{-1}(\set{F})}$ is affine. 
\end{lemma}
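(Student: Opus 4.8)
The plan is to exploit the fact that for fixed $\vec{b}$, a point $\vec{x}$ is the unique solution precisely when $-A^T\Sigma(A\vec{x}-\vec{b}) \in \partial f(\vec{x})$, and that on the relative interior of a fixed $\set{F}\in\complex{F}_f$ the subdifferential $\partial f(\vec{x})$ equals the constant set $\partial f_{\set{F}}$. First I would fix $\set{F}\in\complex{F}_f$ and characterize $[\set{S}^D_{A,f}]^{-1}(\set{F})$ as the set of $\vec{b}$ for which the (unique) solution $\vec{x}(\vec{b})$ lies in $\set{F}$. Using the optimality condition together with Proposition~\ref{prop:constant_subdifferential}, I would write this, at least on $\relint(\set{F})$, as the system $\vec{x}\in\set{F}$ and $-A^T\Sigma(A\vec{x}-\vec{b})\in\partial f_{\set{F}}$. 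Since $\set{F}$ is polyhedral and $\partial f_{\set{F}}$ is polyhedral, the joint condition on $(\vec{x},\vec{b})$ is a polyhedral set in $\reals^n\times\reals^m$, so its projection onto the $\vec{b}$-coordinates is polyhedral and in particular convex; its closedness will follow because the solution map, being single-valued, is a continuous (indeed closed) map — or, more carefully, because the finitely many sets $[\set{S}^D_{A,f}]^{-1}(\set{F})$ partition $\reals^m$ up to lower-dimensional overlaps and each is the projection of a polyhedron intersected with the appropriate closed pieces. I would handle the boundary faces of $\set{F}$ by noting that $\partial f_{\set{H}}\supseteq\partial f_{\set{F}}$ when $\set{F}$ is a face of $\set{H}$ (Lemma~\ref{lemma:dual_inclusion}), so the relative-interior description glues consistently across faces.

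Next, for the affinity of the restricted map, I would argue that on $[\set{S}^D_{A,f}]^{-1}(\set{F})$ the solution is determined by the linear system obtained from the optimality condition. On this region $\partial f_{\set{F}}$ is a fixed polyhedron, so the optimality inclusion $-A^T\Sigma(A\vec{x}-\vec{b})\in\partial f_{\set{F}}$, combined with $\vec{x}\in\set{F}$, pins down $\vec{x}$; since the solution is unique, the affine hull $\aff(\set{F})$ together with the orthogonality $\partial f_{\set{F}}\bot\set{F}$ from Lemma~\ref{lemma:properties_duality_of_complexes_2} forces $A^T\Sigma(A\vec{x}-\vec{b})$ to have a fixed component along $\aff(\set{F})-\aff(\set{F})$ and $\vec{x}$ to move only within $\aff(\set{F})$. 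Concretely, I expect that writing $\vec{x}=\vec{x}_0+M\vec{t}$ with $M$ a basis matrix for the direction space of $\set{F}$, the optimality condition projected onto that direction space becomes a single linear equation $M^T A^T\Sigma A M\vec{t} = M^T A^T\Sigma\vec{b} + \text{const}$, and uniqueness guarantees $M^TA^T\Sigma AM$ is invertible (otherwise a null vector of $AM$ would give a second solution, contradicting uniqueness via Proposition~\ref{prop:finding_two_solutions}), so $\vec{t}$, and hence $\vec{x}$, is an affine function of $\vec{b}$.

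The main obstacle I anticipate is the careful bookkeeping at the boundaries between regions: a given $\vec{b}$ may have its solution $\vec{x}(\vec{b})$ lying on a low-dimensional face $\set{H}$ shared by several top-dimensional $\set{F}$'s, so I must show the preimages are genuinely closed and that the affine formulas on adjacent regions agree on overlaps — this is where single-valuedness is essential, since it rules out the ambiguity that would otherwise make the "restricted map" ill-defined. I would resolve this by first establishing the statement on $\relint$-preimages, then taking closures and invoking uniqueness to identify the closure of $[\set{S}^D_{A,f}]^{-1}(\relint(\set{F}))$ with $[\set{S}^D_{A,f}]^{-1}(\set{F})$, and finally checking that an affine map defined on the relative interior of a convex set extends uniquely to its closure. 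The invertibility argument for $M^TA^T\Sigma AM$ deserves care: if it failed, there would be $\vec{0}\neq\vec{v}\in\nullspace(AM)$, i.e. a direction in $(\aff(\set{F})-\aff(\set{F}))\cap\nullspace(A)$, and moving along it keeps both $A\vec{x}$ and $f(\vec{x})$ fixed (the latter by the orthogonality of $\partial f_{\set{F}}$ to $\set{F}$), producing a second minimizer — contradicting single-valuedness. Once invertibility is in hand, the affine formula and its continuity on each closed polyhedral piece are immediate, setting up the subsequent "gluing" theorem.
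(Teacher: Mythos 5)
Your proposal is correct in substance but reaches the conclusion by a genuinely different route than the paper. The paper's proof is a short convexity argument: if $\vec{\alpha},\vec{\beta}$ have solutions $\vec{x},\vec{y}\in\set{F}$ with certificates $A^T\Sigma(\vec{\alpha}-A\vec{x}),A^T\Sigma(\vec{\beta}-A\vec{y})\in\partial f_{\set{F}}$, then convexity of $\partial f_{\set{F}}$ shows that $\lambda\vec{x}+(1-\lambda)\vec{y}$ satisfies the optimality condition for $\lambda\vec{\alpha}+(1-\lambda)\vec{\beta}$; single-valuedness then forces the restricted map to respect convex combinations, i.e.\ to be affine, and closedness is deduced afterwards. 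You instead (i) obtain convexity and closedness of the piece directly as the projection of the polyhedron $\{(\vec{x},\vec{b}):\vec{x}\in\set{F},\ A^T\Sigma(\vec{b}-A\vec{x})\in\partial f_{\set{F}}\}$, and (ii) prove affinity by parametrizing $\aff(\set{F})$ and inverting $M^TA^T\Sigma AM$, whose nonsingularity you correctly reduce to the absence of a direction in $(\set{F}-\set{F})\cap\nullspace(A)$, which would contradict single-valuedness exactly as in Proposition~\ref{prop:finding_two_solutions}. Your version is longer but buys an explicit affine formula for the solution on each piece and polyhedrality (not just closedness and convexity) of the pieces; the paper's version avoids choosing bases and any invertibility discussion. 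Your projection argument is also preferable to your first suggestion that closedness follows from continuity of the solution map, which would be circular here since continuity is precisely what Theorem~\ref{thm:uniqueness_implies_continuity} derives from this lemma.

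One point deserves emphasis, and you are right to single it out as the main obstacle: when the solution lies on a proper face $\set{H}$ of $\set{F}$, the optimality condition only places the certificate in $\partial f_{\set{H}}\supseteq\partial f_{\set{F}}$, not necessarily in $\partial f_{\set{F}}$ itself. Consequently both your polyhedral projection and the paper's first display really describe the set of $\vec{b}$ whose certificate lies in $\partial f_{\set{F}}$, which can be a proper subset of the literal preimage $[\set{S}^D_{A,f}]^{-1}(\set{F})$ (already for $n=m=1$, $A=\Sigma=1$, $f=|x|$ and $\set{F}=[0,\infty)$ the two sets differ, and the restricted map fails to be affine on the literal preimage). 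Your plan of working with the $\relint$-preimages first and then closing up is the right repair: the certificate-defined pieces are closed, convex, cover $\reals^m$, and carry an affine solution map, which is all that the pasting argument in Theorem~\ref{thm:uniqueness_implies_continuity} requires. So the remaining work in your write-up is not a new idea but the bookkeeping you already identified; carrying it out as you describe yields a complete proof of the statement in the form actually used downstream.
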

\begin{proof}
Let $\vec{\alpha},\vec{\beta} \in [S^D_{A,f}]^{-1}(\set{F})$ such that $\vec{x} = S^D_{A,f}(\vec{\alpha}) \in \set{F}$ and $\vec{y} = S_{A,f}(\vec{\beta}) \in \set{F}$. Then,
\begin{equation*}
    A^T\Sigma(\vec{\alpha} - A\vec{x}) \in \partial f_{\set{F}}\quad \text{and} \quad A^T\Sigma(\vec{\beta} - A\vec{y}) \in \partial f_{\set{F}}.
\end{equation*}
Let $\lambda \in [0, 1]$, then by convexity of $\partial f_{\set{F}}$ we obtain,
\begin{equation*}
    A^T\Sigma([\lambda\vec{\alpha} + (1-\lambda)\vec{\beta}] - A[\lambda\vec{x} + (1-\lambda)\vec{y}])\in \partial f_{\set{F}}.
\end{equation*}
Thus, for all $\alpha,\beta \in [S^D_{A,f}]^{-1}(\set{F})$ and $\lambda \in [0,1]$, we have
\begin{equation*}
    S_{A, f}(\lambda\vec{\alpha} + (1-\lambda)\vec{\beta}) = \lambda S_{A,f}(\vec{\alpha}) + (1-\lambda)S_{A,f}(\vec{\beta}),
\end{equation*}
therefore, $S_{A,f}(\set{F})$ is convex and $[S^D_{A,f}]^{-1}\big|_{[S^D_{A,f}]^{-1}(\set{F})}$ is affine.
Because $[S^D_{A,f}]^{-1}\big|_{[S^D_{A,f}]^{-1}(\set{F})}$ is continuous and $\set{F}$ is closed, the set $[S^D_{A,f}]^{-1}(\set{F})$ is closed.
\hfill$\square$\end{proof}

\begin{theorem}[uniqueness implies continuity]\label{thm:uniqueness_implies_continuity}
Let $D : \reals^m \times \reals^m \rightarrow \reals$ be $D(\vec{x}, \vec{y}) = \|\vec{x} - \vec{y}\|^2_{\Sigma}$, $A \in \reals^{m\times n}$ be a linear operator and $f:\reals^n \rightarrow \reals \cup \{\infty\}$ be a convex piecewise linear function. If $\set{S}_{A, f}$ is single-valued, then $S_{A,f}$ is continuous.
\end{theorem}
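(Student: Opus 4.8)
The goal is to upgrade the piecewise-affine structure established in Lemma~\ref{lem:uniqueness_implies_piecewise_linear} to genuine continuity of the single-valued solution map $S_{A,f}$. The plan is to show that $S_{A,f}$ is a continuous selection of affine maps defined on a locally finite closed polyhedral cover of $\reals^m$, and that a function which agrees on overlaps and is continuous on each (closed) piece of such a cover is globally continuous. Concretely, by Lemma~\ref{lem:uniqueness_implies_piecewise_linear}, for each $\set{F} \in \complex{F}_f$ the set $\set{C}_{\set{F}} := [S^D_{A,f}]^{-1}(\set{F})$ is closed and convex and $S_{A,f}\big|_{\set{C}_{\set{F}}}$ is affine, hence continuous. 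Since $\complex{F}_f$ is a finite polyhedral complex and $\range(S_{A,f}) = \bigcup_{\set{F}\in\complex{E}}\set{F}$ for a subcomplex $\complex{E}$ by Corollary~\ref{cor:range_solution_set}, the family $\{\set{C}_{\set{F}}\}_{\set{F}\in\complex{E}}$ is a \emph{finite} closed cover of $\reals^m = \dom(S_{A,f})$ (the domain is all of $\reals^m$ by Lemma~\ref{lem:range_condition} together with existence of least-squares minimizers).

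First I would verify the cover is exactly $\reals^m$: every $\vec{b}$ has a (unique) minimizer $\vec{x}^\star = S_{A,f}(\vec{b})$, and $\vec{x}^\star$ lies in $\relint(\set{F})$ for a unique $\set{F}\in\complex{F}_f$, so $\vec{b}\in\set{C}_{\set{F}}$; uniqueness of the minimizer forces these inverse images to be pairwise disjoint on relative interiors, though they may share boundary pieces. Second, the key topological fact: if $X$ is covered by finitely many closed sets $X = \bigcup_i \set{C}_i$ and a function $g$ restricted to each $\set{C}_i$ is continuous, then $g$ is continuous on $X$ — this is the standard pasting (gluing) lemma for finite closed covers. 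I would invoke it (or give the one-line proof: for closed $K$, $g^{-1}(K) = \bigcup_i (g\big|_{\set{C}_i})^{-1}(K)$ is a finite union of closed sets, hence closed). Applying this with $g = S_{A,f}$ and $\set{C}_i = \set{C}_{\set{F}}$ yields continuity of $S_{A,f}$ on $\reals^m$.

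The one genuinely delicate point — and the main obstacle — is consistency on the overlaps together with making sure each $\set{C}_{\set{F}}$ is itself closed \emph{as a subset of $\reals^m$} and not merely closed within some affine slice; Lemma~\ref{lem:uniqueness_implies_piecewise_linear} already asserts closedness, so the work is to confirm that where two pieces $\set{C}_{\set{F}}$ and $\set{C}_{\set{H}}$ meet, the two affine formulas agree. This follows from single-valuedness of $S_{A,f}$: at $\vec{b}\in\set{C}_{\set{F}}\cap\set{C}_{\set{H}}$ there is exactly one minimizer, so $S_{A,f}\big|_{\set{C}_{\set{F}}}(\vec{b}) = S_{A,f}(\vec{b}) = S_{A,f}\big|_{\set{C}_{\set{H}}}(\vec{b})$ automatically — there is nothing to reconcile, because all the restricted maps are literally restrictions of one well-defined function. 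So once the pasting lemma is in hand the proof closes cleanly; I would spend the remaining care on the finiteness of $\complex{F}_f$ (immediate from Definition~\ref{def:piecewise_linear_function}, the epigraph being a polyhedron with finitely many faces) and on the claim that $\dom(S_{A,f}) = \reals^m$, which needs existence of minimizers for the least-squares objective — coercive on the quotient by $\dom(f)$'s lineality, or simply cite that $\frac12\|A\vec{x}-\vec{b}\|_\Sigma^2 + f(\vec{x})$ is a proper closed convex function whose sublevel sets are bounded modulo a subspace on which the objective is constant, guaranteeing the argmin is nonempty for every $\vec{b}$.
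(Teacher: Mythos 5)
Your proposal is correct and follows essentially the same route as the paper: decompose $\reals^m$ into the finite closed cover $\{[S^D_{A,f}]^{-1}(\set{F})\}_{\set{F}\in\complex{E}}$ furnished by Corollary~\ref{cor:range_solution_set} and Lemma~\ref{lem:uniqueness_implies_piecewise_linear}, observe that the restriction to each piece is affine hence continuous, and conclude by the pasting lemma for finite closed covers. Your additional care about finiteness of the complex, nonemptiness of the argmin for every $\vec{b}$, and the (automatic) agreement on overlaps only makes explicit what the paper's proof leaves implicit.
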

\begin{proof}
By Corollary \ref{cor:range_solution_set}, there exists a polyhedral subcomplex $\complex{E} \subseteq \complex{F}_f$ such that $\range(S_{A, f}) = \bigcup_{\set{F}\in \complex{E}}\set{F}$. Due to Lemma \ref{lem:uniqueness_implies_piecewise_linear}, the domain of $S_{A,f}$ can be covered by closed sets, i.e., $\dom(S_{A, f}) = \reals^m = \bigcup_{\set{F}\in \complex{E}}[S^D_{A,f}]^{-1}(\set{F})$. The solution map $S_{A, f}$ restricted to each closed covering element $[S^D_{A,f}]^{-1}(\set{F})$ is affine, hence continuous. Because the function coincided on the intersection of the covering elements, the continuity of $S_{A,f}$ follows from the pasting lemma \cite[Theorem 18.3]{Munkres2014topology}.
\hfill$\square$\end{proof}

Combining the existence and uniqueness result in Proposition \ref{cor:nec_suf_un} with the continuity result in Theorem \ref{thm:uniqueness_implies_continuity} results in slightly more general version of the necessary and sufficient conditions for well-posedness in Theorem \ref{thm:nec_suf_well_posedness_intro}.

\section{Implications}\label{sec:implications}

The conditions derived in the previous section, specifically Corollary \ref{cor:nec_suf_un} for uniqueness and the extension to continuity in Theorem \ref{thm:uniqueness_implies_continuity}, give us more geometric intuition on the well-posedness of sparse regularized linear least squares. In this section, we will use this geometric viewpoint to further study when an optimization problem is well-posed or ill-posed. Particularly, in Subsection \ref{subsec:comparison_l2_l1}, we compare the conditions for well-posedness between regularization of the form $f(\vec{x}) =  \|L\vec{x}\|_2^2$ and $f(\vec{x}) = \|L\vec{x}\|_1$. In Subsection \ref{subsec:strong_ill-posed}, we discuss forward operators $A$ of relatively low rank that, even with specific sparsity promoting regularization, require a large number additional measurements to obtain well-posedness. This leads to a combinatorial look at well-posedness, which will be used in Subsection \ref{subsec:computation_complexity} to study the computational complexity of verifing well-posedness.

\subsection{Comparison between $l_2$ and $l_1$ regularization}\label{subsec:comparison_l2_l1}

Many popular regularization choices make use of the squared $l_2$ norm, resulting in optimization problems of the form,
\begin{equation}\label{eq:tikhonov}
    \argmin_{\vec{x}\in \mathbb{R}^n}\left\{\frac{1}{2}\|A\vec{x}-\vec{b}\|_2^2 + \frac{1}{2}\|L\vec{x}-\vec{c}\|_2^2\right\},
\end{equation}
for some choice of $L \in \mathbb{R}^{k \times n}$. A vector $\vec{x}^\star$ is solution of \eqref{eq:tikhonov} if and only if it satisfies the normal equation $(A^TA + L^TL)\vec{x}^\star = A^T\vec{b} + L^T\vec{c}$, which has a unique solution precisely when $\nullspace(A)\cap\nullspace(L) = \{\vec{0}\}$. Under this condition, the solution map of \eqref{eq:tikhonov} takes the form $\vec{b} \mapsto (A^TA + L^TL)^{-1}(A^T\vec{b} + L^T\vec{c})$, a continuous map. Thus, under the range condition $\nullspace(A)\cap\nullspace(L) = \{\vec{0}\}$, optimization problem \eqref{eq:tikhonov} is well-posed.

Consider each row of $A$ as a separate measurement and each row of $L$ as a separate penalty, such that when we append or remove rows from $A$ and $L$, we can speak of adding or removing measurements and penalties respectively.
Then, from this range condition, we can make the following intuitive observations about problem \eqref{eq:tikhonov}:
\begin{itemize}
    \item Adding measurements can make the problem well-posed, never ill-posed.
    \item Adding penalties can make the problem well-posed, never ill-posed.
    \item Removing measurements can make the problem ill-posed, never well-posed.
    \item Removing penalties can make the problem ill-posed, never well-posed.
\end{itemize}
These observations can be summarized as saying that adding information, either measurements or penalties, can only improve the well-posedness, whilst removing information can only worsen the well-posedness.

To promote sparsity, the smooth penalty $\frac{1}{2}\|\cdot\|_2^2$ in the regularization of \eqref{eq:tikhonov} can be replace by the non-differentiable norm $\|\cdot\|_1$, resulting in an optimization problem of the form,
\begin{equation}\label{eq:l1}
    \argmin_{\vec{x}\in \mathbb{R}^n}\left\{\frac{1}{2}\|A\vec{x}-\vec{b}\|_2^2 + \|L\vec{x}\|_1\right\}.
\end{equation}

From the theory presented in Section \ref{sec:well-posedness}, we know that this problem is well-posed if for any accessible set $\set{F}$, i.e., $\row(A) \cap \partial f_{\set{F}} \neq \emptyset$, it holds that $\codim(\partial f_{\set{F}}) \leq \Rank(A)$. Let us now consider the same four cases as we considered for problem \eqref{eq:tikhonov}:

\begin{itemize}
    \item When adding a measurement to $A$, $\Rank(A)$ can increase by at most $1$, whilst a set $\set{F}$ can become accessible whose codimension becomes arbitrarily large. Thus adding measurements can turn the problem both well-posed and ill-posed.
    \item When adding a penalty to $L$, the dual complex changes, making it possible to turn the problem both well-posed and ill-posed. This is illustrated in Figure \ref{fig:examples_add_l1} by comparing \ref{fig:examples_add_l1_1} to \ref{fig:examples_add_l1_2}.
    \item When removing a measurement from $A$, $\Rank(A)$ can decrease by at most $1$, whilst a problematic accessible set $\set{F}$ that violates $\codim(\partial f_{\set{F}}) \leq \Rank(A) - 1$ can become inaccessible. Thus, removing measurements can turn the problem both well-posed and ill-posed.
    \item Finally, similarly to adding a penalty, removing a penalty can drastically change the dual complex, turning a problem either well-posed or ill-posed. This is illustrated in Figure \ref{fig:examples_add_l1} by comparing \ref{fig:examples_add_l1_2} to \ref{fig:examples_add_l1_1}
\end{itemize}

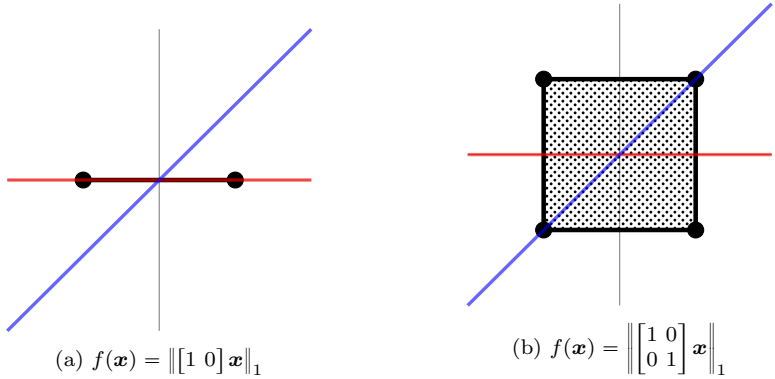
\begin{figure}[H]
\centering
\begin{subfigure}{0.49\textwidth}
    \centering
    \begin{tikzpicture}[scale = 1]
        \draw[gray] (-2,0) -- (2, 0);
        \draw[gray] (0,-2) -- (0, 2);
        
        \draw[black, line width = 0.2em] (-1,0) -- (1, 0);
        
        \filldraw[black] (-1,0) circle (3pt);
        \filldraw[black] (1,0) circle (3pt);
        
        \draw[red, line width = 0.15em, opacity = 0.6] (-2,0) -- (2, 0);
        \draw[blue, line width = 0.15em, opacity = 0.6] (-2,-2) -- (2, 2);
        
    \end{tikzpicture}
    \caption{$f(\vec{x}) = \left\|\begin{bmatrix}1 & 0\end{bmatrix}\vec{x}\right\|_1$}
    \label{fig:examples_add_l1_1}
\end{subfigure}
\hfill
\begin{subfigure}{0.49\textwidth}
\centering
    \begin{tikzpicture}[scale = 1]
        \draw[gray] (-2,0) -- (2, 0);
        \draw[gray] (0,-2) -- (0, 2);
        
        \fill[pattern=crosshatch dots, pattern color=black] (-1,-1) rectangle ++(2,2);
        
        \draw[black, line width = 0.2em] (1,1) -- (1, -1) -- (-1, -1) -- (-1, 1) -- cycle;
        
        \filldraw[black] (1,1) circle (3pt);
        \filldraw[black] (1,-1) circle (3pt);
        \filldraw[black] (-1,-1) circle (3pt);
        \filldraw[black] (-1,1) circle (3pt);
        
        \draw[red, line width = 0.15em, opacity = 0.6] (-2,0) -- (2, 0);
        \draw[blue, line width = 0.15em, opacity = 0.6] (-2,-2) -- (2, 2);
        
    \end{tikzpicture}
    \caption{$f(\vec{x}) = \left\|\begin{bmatrix}1 & 0 \\ 0 & 1\end{bmatrix}\vec{x}\right\|_1$}
    \label{fig:examples_add_l1_2}
\end{subfigure}
\caption{Dual complexes for different $l_1$ regularization functions with linear operators $A = \begin{bmatrix}1 & 0\end{bmatrix}$ ($\row(A)$ is blue) and $B = \begin{bmatrix}1 & 1\end{bmatrix}$ ($\row(B)$ is red).}
\label{fig:examples_add_l1}
\end{figure}

Thus, although the squared $l_2$ norm and $l_1$ norm look very similar, their behaviour with respect to well-posedness is very different. To be more specific, none of the intuitive observations about problem \eqref{eq:tikhonov} hold for problem \eqref{eq:l1}. That is, adding or removing information, either measurements or penalties, can both make or break the well-posedness. As a consequence, one might have to be careful when studying the well-posedness of \eqref{eq:l1}, as the well-posedness or ill-posedness cannot always be directly deduced from slightly larger or smaller problems.

As is discussed in the next subsections, directly verifying well-posedness of linear least squares problems with sparsity promoting regularization can be difficult. However, there are relatively few forward operators $A$ that result in ill-posed problems, in the sense of the following theorem, whose proof is nearly identical to \cite{schneider2022geometry}.
\begin{theorem}\label{thm:almost_surely_wellposed}
    Let $f : \reals^n \rightarrow \cup \{\infty\}$ be a convex piecewise linear function with $\dim(\dom(f)) = n$ and $p$ be the dimension of the smallest subdifferential $\partial f_{\set F} \in \complex{F}_f^{\star}$ that contains the zero vector. If $A \in \reals^{m \times n}$ with $m \geq p$ is a continuously distributed random matrix, then the linear least squares problem for $A$ regularized by $f$ is almost surely well-posed.
\end{theorem}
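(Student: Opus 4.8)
The plan is to obtain the statement directly from the necessary and sufficient characterization of well-posedness established above. By Corollary \ref{cor:nec_suf_un} (existence and uniqueness) together with Theorem \ref{thm:uniqueness_implies_continuity} (uniqueness implies continuity), the linear least squares problem for $A$ regularized by $f$ is well-posed if and only if every accessible cell $\set{F}\in\complex{F}_f$, that is, every $\set{F}$ with $\row(A)\cap\partial f_{\set{F}}\neq\emptyset$, satisfies $\codim(\partial f_{\set{F}})\leq\Rank(A)$. Since $A$ is continuously distributed, the rank-deficient matrices form a Lebesgue-null, hence probability-zero, subset of $\reals^{m\times n}$, so almost surely $\Rank(A)=\min(m,n)$. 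When $m\geq n$ this equals $n$ and the condition holds for every cell, so I would reduce to the case $p\leq m<n$, where almost surely $\Rank(A)=m$ and only the cells with $\codim(\partial f_{\set{F}})>m$ need attention.

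Because $\complex{F}_f$ is finite, it then suffices to fix one cell $\set{F}$ with $\codim(\partial f_{\set{F}})>m$ and show $\prob[\row(A)\cap\partial f_{\set{F}}\neq\emptyset]=0$, splitting on whether $\vec{0}\in\partial f_{\set{F}}$. If $\vec{0}\in\partial f_{\set{F}}$, then since $\complex{F}_f^\star$ is a polyhedral complex the unique dual cell whose relative interior contains $\vec{0}$ — the smallest subdifferential containing $\vec{0}$ — is a face of $\partial f_{\set{F}}$, so by Lemma \ref{lemma:properties_duality_of_complexes_2}(\ref{lemma:duality_dimensions}) one has $\codim(\partial f_{\set{F}})\leq\codim$ of that smallest subdifferential, which is at most $m$ by the hypothesis $m\geq p$; hence this case cannot occur. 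If $\vec{0}\notin\partial f_{\set{F}}$, let $V$ be the direction subspace of $\aff(\partial f_{\set{F}})$, so $\dim V=\dim\partial f_{\set{F}}<n-m$. When $\vec{0}\notin\aff(\partial f_{\set{F}})$, fix any $\vec{q}\in\aff(\partial f_{\set{F}})$; then $\row(A)$ meets $\aff(\partial f_{\set{F}})$ exactly when $\vec{q}\in\row(A)+V$, a subspace of dimension at most $m+\dim V<n$, and the matrices $A$ for which the fixed nonzero vector $\vec{q}$ lands in it form a proper determinantal subvariety of $\reals^{m\times n}$ — proper precisely because $m+\dim V<n$ lets one exhibit an $A$ with $\vec{q}\notin\row(A)+V$ — hence a null set. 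When $\vec{0}\in\aff(\partial f_{\set{F}})$, so $\aff(\partial f_{\set{F}})=V$, the same dimension count gives $\row(A)\cap V=\{\vec{0}\}$ off a null set, and $\vec{0}\notin\partial f_{\set{F}}$ then forces $\row(A)\cap\partial f_{\set{F}}=\emptyset$. In either sub-case the bad event is null.

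Taking the finite union of these null events over all cells with $\codim(\partial f_{\set{F}})>m$, with probability one every accessible cell satisfies $\codim(\partial f_{\set{F}})\leq m=\Rank(A)$, and the conclusion follows from Corollary \ref{cor:nec_suf_un} and Theorem \ref{thm:uniqueness_implies_continuity}. The step that is not routine bookkeeping with the duality lemmas of Section \ref{sec:convex_piecewise_linear_functions}, and hence the point to get right, is the general-position claim: that for a fixed flat of dimension strictly below $n-m$, the set of $A\in\reals^{m\times n}$ whose row space meets it is Lebesgue-null. This is the standard transversality fact that a generic $m$-dimensional subspace misses a fixed lower-dimensional affine subspace, but it should be justified explicitly — for instance by exhibiting the offending set as the common zero locus of a nontrivial system of minors in the entries of $A$.
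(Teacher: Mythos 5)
The paper does not actually prove Theorem~\ref{thm:almost_surely_wellposed}; it only remarks that the argument is ``nearly identical to \cite{schneider2022geometry}''. Your strategy --- reduce to the criterion of Corollary~\ref{cor:nec_suf_un} plus Theorem~\ref{thm:uniqueness_implies_continuity}, use that $\Rank(A)=\min(m,n)$ almost surely, and dispose of each dual cell of codimension exceeding $m$ by a general-position argument over the finitely many cells --- is exactly that strategy, and your transversality step (a fixed flat of dimension below $n-m$ is missed by $\row(A)$ off a proper algebraic subvariety of $\reals^{m\times n}$, hence off a null set) is correct and is the heart of the matter.

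There is, however, a genuine gap in your treatment of the case $\vec{0}\in\partial f_{\set{F}}$. Writing $\partial f_{\set{G}_0}$ for the unique dual cell whose relative interior contains $\vec{0}$, you correctly observe that $\partial f_{\set{G}_0}$ is a face of $\partial f_{\set{F}}$ and hence $\codim(\partial f_{\set{F}})\leq\codim(\partial f_{\set{G}_0})=n-p$. But you then assert that this quantity is at most $m$ ``by the hypothesis $m\geq p$'', which is a non sequitur: $m\geq p$ gives no control on $n-p$. Nor is this a repairable slip within the stated hypotheses --- with $p$ literally the \emph{dimension} of the minimal subdifferential containing $\vec{0}$, the theorem is false. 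Take $f=\chi_{[0,1]^n}$ with $n\geq 2$ and $m=1$: every normal cone of the cube contains $\vec{0}$, the smallest is the cell $\{\vec{0}\}$, so $p=0\leq m$; yet that cell is always accessible (since $\vec{0}\in\row(A)$) and has codimension $n>\Rank(A)$, and indeed the solution set is an $(n-1)$-dimensional slice of the cube for generic data, so the problem is not well-posed. The statement becomes true, and your case-one argument becomes correct, if $p$ is read as the \emph{codimension} of the smallest subdifferential containing $\vec{0}$ --- equivalently, by part (\ref{lemma:duality_dimensions}) of Lemma~\ref{lemma:properties_duality_of_complexes_2}, as the dimension of the primal cell to which that subdifferential corresponds (this reading is also the one consistent with the known fact that the lasso, for which the carrier of $\vec{0}$ is all of $[-1,1]^n$, is almost surely well-posed for every $m\geq 1$). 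You should have flagged this dimension/codimension discrepancy and proved the corrected statement, rather than bridging it with an inequality that does not hold.
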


\subsection{Bad measurements}\label{subsec:strong_ill-posed}
As discussed in the previous subsection, when adding a row to $A$, the rank of $A$ can only increase by at most one, whilst a set $\set{F}$ with arbitrarily large codimension can become accessible. In the worst case scenario, a set satisfying $\codim(\partial f_{\set{F}}) = n$ becomes accessible, in which case the condition for well-posedness becomes $n = \codim(\partial f_{\set{F}}) \leq \Rank(A)$, i.e., $A$ needs to have full column rank. So, even though sparsity can be used to compensate for a lack of measurements, in the sense that $\Rank(A)$ is much smaller than the number of parameters $n$, a single measurement can significantly change the conditions for well-posedness. Specifically, adding a multiple of any vertex from $\complex{F}^\star$ to the rows of $A$ results in needing to add many more measurements before the problem can become well-posed.

As a simple example, consider $f(\vec{x}) = \|\vec{x}\|_1$. If $A \in \mathbb{R}^{m\times n}$ has a single row consisting of $k > 0$ components being $\pm 1$ and all other components being in $(-1,1)$, then this row implies that $k \leq \Rank(A)$ is a necessary condition for well-posedness, independent of the other rows of $A$.

For a more complicated example, consider the following example from computed tomography (CT). Let the parameters $\vec{x} \in \mathbb{R}^{N^2}$ represent a two-dimensional image and consider the measurement matrix $A \in \mathbb{R}^{N\times N^2}$ that computes the sum of each column of the image. In CT, these measurements correspond to a single parallel-beam projection aligned with the vertical axis of the discretization grid. By adding all the rows of $A$ together, we obtain that the all ones vector $\vec{1}$ is in the row space of $A$. However, if we pick $f(\vec{x}) = \gamma\|\vec{x}\|_1$ with $\gamma > 0$ as regularization, then $\{\vec{1}\} \in \complex{F}_f^\star$, hence we require $N^2 \leq \Rank(A)$. Thus, whilst we have only a linear number of measurements, well-posedness requires a quadratic number of measurements. We can therefore conclude that this choice of regularization is ill-suited when we have relatively few of such CT measurements.

Instead, we might pick total variation regularization of the from $f(\vec{x}) = \gamma \text{TV}_{G}(\vec{x})$ with $\gamma > 0$, as discussed in Subsection \ref{subsec:TV}, where the nodes in the graph $G$ are the pixels in the image $\vec{x}$ and the links connect neighbouring pixels. Now let the measurement matrix $A \in \mathbb{R}^{2N\times N^2}$ compute the sum of each column and row of the image separately. In a CT setting, this corresponds to two parallel-beam projections, one aligned with the horizontal axis and one with the vertical axis. We can then show that a necessary condition for well-posedness is $N^2 \leq \Rank(A)$, thus even with TV regularization, we can find realistic measurements that result in requiring $A$ to be full rank for well-posedness.

The three steps to the argument are visualized in Figure \ref{fig:ill-posedness_tv_axes_aligned_measurements}.
First, in Figure \ref{fig:axes_aligned_measurements}, we construct a vector $\vec{z} \in \mathbb{R}^{2N}$ alternating between $-2$ and $2$ with the exception of the rows of $A$ that correspond the boundary of the image, in which case we let it be $-1$ or $1$. The corresponding vector $A^T\vec{z} \in \row(A)$ is shown in Figure \ref{fig:axes_aligned_measurements_transposed}. Due to Proposition \ref{prop:acyclic_vertex}, to show that $A^T\vec{z}$ is a vertex of $\complex{F}_{\text{TV}}^\star$, we can find an directed acyclic graph such that $A^T\vec{z}$ is the difference between the in-degrees and out-degrees of the directed graph. This directed graph is shown in Figure \ref{fig:axes_aligned_measurements_graph}.Recall from Proposition \ref{prop:acyclic_vertex} that adding nonnegativity constraints does not change the vertices and hence this argument also shows that the condition $N^2 \leq \Rank(A)$ holds for nonnegativity constrained total variation.

Similar constructions to these can be done for various different forward operators and choices of regularization, for example, CT with diagonal parallel-beam geometry and total variational regularization. However, many of these are combinatorial in nature and have to be constructed separately. It would be preferred if there was an automated way of performing such type of analysis, which turns out to be computationally difficult, as we discussed in the next subsection. 

\begin{figure}[H]
\centering
\begin{subfigure}{0.3\textwidth}
    \centering
    \begin{tikzpicture}[scale=0.4]
    \draw[step=1.5,black!50,thin,xshift=0.0cm,yshift=0.0cm] (0.0,0.0) grid (6,6);

    \foreach \x in {0,...,3}
        \path [->] (1.5*\x + 0.75, -0.8) edge (1.5*\x + 0.75, 6.8);
    
    \node[] (A) at (1.5*0 + 0.75, -1.25) {\large $-1$};
    \node[] (A) at (1.5*1 + 0.75, -1.25) {\large $2$};
    \node[] (A) at (1.5*2 + 0.75, -1.25) {\large $-2$};

    \foreach \y in {0,...,3}
        \path [->] (-0.8, 1.5*\y + 0.75) edge (6.8, 1.5*\y + 0.75);
    
    \node[] (A) at (-1.3, 1.5*0 + 0.75) {\large $-1$};
    \node[] (A) at (-1.3, 1.5*1 + 0.75) {\large $2$};
    \node[] (A) at (-1.3, 1.5*2 + 0.75) {\large $-2$};
\end{tikzpicture}
    \caption{Measurement matrix $A$ represented by arrows with corresponding numbers representing the residuals $\vec{z}$.}
    \label{fig:axes_aligned_measurements}
\end{subfigure}
\hfill
\begin{subfigure}{0.3\textwidth}
    \centering
    \begin{tikzpicture}[scale=0.45]
    \draw[step=1.5,black!50,thin,xshift=0.0cm,yshift=0.0cm] (0.0,0.0) grid (6,6);

    \node[] (A) at (0.75,0.75)              {\large $-2$};
    \node[] (A) at (0.75 + 1.5,0.75)        {\large $1$};
    \node[] (A) at (0.75 + 3,0.75)          {\large $-3$};
    \node[] (A) at (0.75,0.75 + 1.5)        {\large $1$};
    \node[] (A) at (0.75 + 1.5,0.75 + 1.5)  {\large $4$};
    \node[] (A) at (0.75 + 3,0.75 + 1.5)    {\large $0$};
    \node[] (A) at (0.75,0.75 + 3) {\large $-3$};
    \node[] (A) at (0.75 + 1.5,0.75 + 3) {\large $0$};
    \node[] (A) at (0.75 + 3,0.75 + 3) {\large $-4$};
\end{tikzpicture}
\vspace{2em}
    \caption{Point in the row space $A^T\vec{z}$. \vspace{3.5em}}
    \label{fig:axes_aligned_measurements_transposed}
\end{subfigure}
\hfill
\begin{subfigure}{0.3\textwidth}
    \centering
    \includegraphics[width=1.0\textwidth]{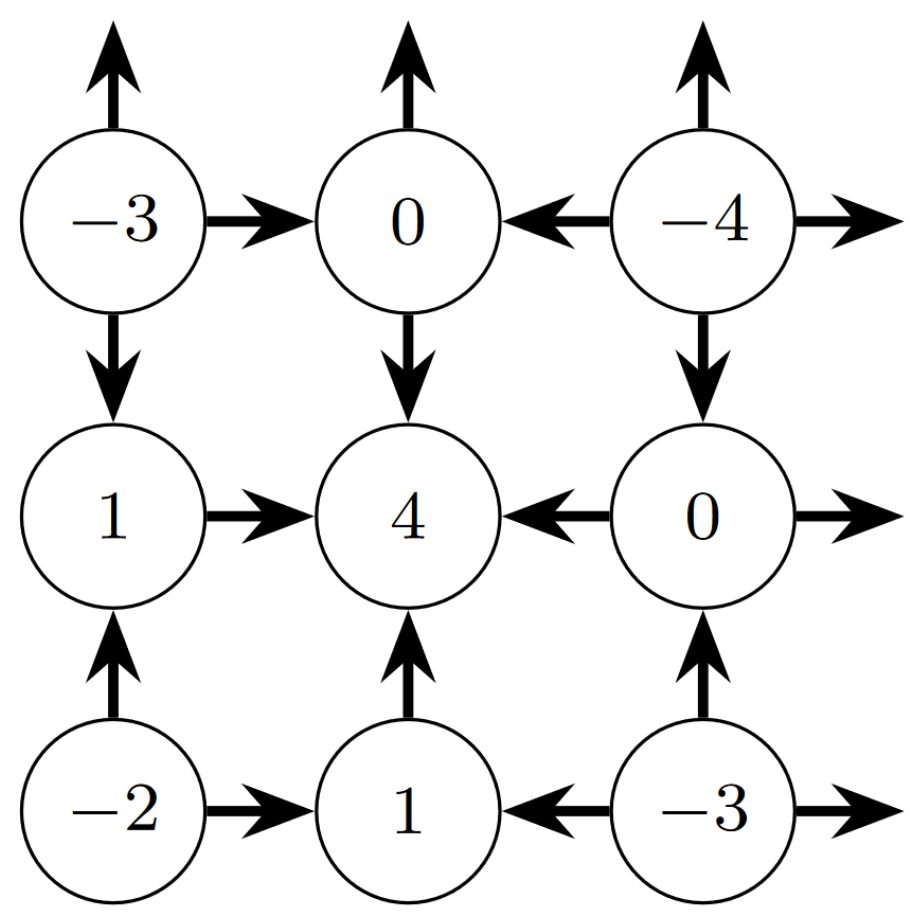}
    \caption{Directed acyclic graph that proofs that $A^T\vec{z}$ is a vertex in the dual complex $\complex{F}_{\text{TV}}^\star$.\vspace{1.3em}}
    \label{fig:axes_aligned_measurements_graph}
\end{subfigure}
\hfill
\caption{Visual proof of the necessary conditions with axes aligned projections and total variation regularization.}
\label{fig:ill-posedness_tv_axes_aligned_measurements}
\end{figure}

\subsection{Computational complexity}\label{subsec:computation_complexity}

As previously shown, a few measurements result in problematic necessary conditions for well-posedness. Specifically, the quantity
\begin{equation}\label{eq:ill-posedness_nr}
    \min\{\dim(\partial f_{\set{F}})\,|\, \partial f_{\set{F}} \in \complex{F}_f^\star \text{ and } \row(A) \cap \partial f_{\set{F}} \neq \emptyset\},
\end{equation}
provides a lower bound on the rank of $A$ that is sufficient to guarantee well-posedness. However, as the following theorem shows, solving \eqref{eq:ill-posedness_nr} for certain $f$ can be transformed into an $l_0$-minimization problem, which is a known to be a computationally difficult problem.

\begin{proposition}\label{prop:l0_minimization}
For any $B\in \reals^{k\times n}$ and $\vec{y} \in \reals^{k}$, there exists $A \in \reals^{m\times n}$ and $f(\vec{x}) = \chi_{\|\cdot\|_\infty \leq 1}(\vec{x}) - \vec{v}^T\vec{x}$ for some $\vec{v} \in \reals^n$, such that the $l_0$ minimization problem
\begin{equation}\label{eq:l0_minimization}
    \min_{\vec{z}}\{\|\vec{z}\|_0 \,|\, B\vec{z} = \vec{y}\},
\end{equation}
is equivalent to
\begin{equation}\label{eq:ill-posedness_minimization}
    \min_{\set{F}}\{\dim(\partial f_{\set{F}})\,|\, \partial f_{\set{F}} \in \complex{F}_f^\star \text{ and } \row(A) \cap \partial f_{\set{F}} \neq \emptyset\}.
\end{equation}

Similarly, the nonnegative $l_0$ minimization problem
\begin{equation}\label{eq:nnl0_minimization}
    \min_{\vec{z}}\{\|\vec{z}\|_0 \,|\, B\vec{z} = \vec{y} \text{ and } \vec{z} \in \reals^{n}_{\geq 0}\},
\end{equation}
can be transformed into \eqref{eq:ill-posedness_minimization} for $f(\vec{x}) = \chi_{\vec{v} - \reals^{n}_{\geq 0}}(\vec{x}) - \vec{v}^T\vec{x}$ for some $\vec{v} \in \reals^n$.
\end{proposition}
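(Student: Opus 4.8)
The plan is to compute the primal and dual complexes of the two candidate regularizers explicitly, reduce the minimization \eqref{eq:ill-posedness_minimization} to a support-size minimization over $\row(A)$, and then choose $A$ and $\vec v$ so that this becomes exactly \eqref{eq:l0_minimization} (resp.\ \eqref{eq:nnl0_minimization}). Both candidates have the form $f(\vec x) = \chi_{\set C}(\vec x) - \vec v^T\vec x$ with $\set C$ a full-dimensional polyhedron: the cube $\set C = \{\vec x : \|\vec x\|_\infty \le 1\}$ in the first case and the shifted orthant $\set C = \vec v - \reals^n_{\ge 0}$ in the second. Since $f$ is affine on $\dom(f) = \set C$, its graph is an affine copy of $\set C$, so the lower faces of $\epi(f)$ project (Definition \ref{def:domain_complex}) onto exactly the faces of $\set C$; hence $\complex{F}_f$ is the face lattice of $\set C$, and by Proposition \ref{prop:constant_subdifferential} we have $\partial f_{\set F} = -\vec v + N_{\set C}(\vec x)$ for $\vec x \in \relint(\set F)$. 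Indexing the faces of the cube by a set $S \subseteq \{1,\dots,n\}$ of coordinates pinned to values $\epsilon_i \in \{-1,1\}$ gives $\partial f_{\set F} = -\vec v + \operatorname{cone}\{\epsilon_i\vec e_i : i \in S\}$ of dimension $|S|$; for the shifted orthant, pinning $x_i = v_i$ for $i \in S$ gives $\partial f_{\set F} = -\vec v + \operatorname{cone}\{\vec e_i : i \in S\}$.

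Next I would translate accessibility into a support condition: $\row(A) \cap \partial f_{\set F} \neq \emptyset$ holds exactly when some $\vec d$ satisfies $A^T\vec d + \vec v \in \operatorname{cone}\{\epsilon_i\vec e_i : i \in S\}$, i.e.\ $A^T\vec d + \vec v$ is supported on $S$ (and, in the orthant case, componentwise nonnegative). For any fixed $\vec d$ the unique inclusion-minimal cell of $\complex{F}_f^\star$ containing the point $A^T\vec d$ corresponds to $S = \support(A^T\vec d + \vec v)$, so
\begin{equation*}
\min_{\set F}\left\{\dim(\partial f_{\set F}) : \partial f_{\set F} \in \complex{F}_f^\star,\ \row(A) \cap \partial f_{\set F} \neq \emptyset\right\} = \min_{\vec d \in \reals^m}\|A^T\vec d + \vec v\|_0
\end{equation*}
in the cube case, and equals $\min\{\|A^T\vec d + \vec v\|_0 : A^T\vec d + \vec v \ge 0\}$ in the shifted-orthant case.

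Finally I would make the reduction explicit. Let $\vec z_0$ be any solution of $B\vec z_0 = \vec y$ (if none exists, both problems are infeasible and the statement is vacuous), set $\vec v = \vec z_0$, and let $A \in \reals^{m\times n}$ be any matrix with $\row(A) = \nullspace(B)$ — for instance with rows forming a basis of $\nullspace(B)$, so that $m = n - \Rank(B)$. Then $\{A^T\vec d + \vec v : \vec d \in \reals^m\} = \vec z_0 + \nullspace(B) = \{\vec z : B\vec z = \vec y\}$, so the support-size problem above becomes $\min\{\|\vec z\|_0 : B\vec z = \vec y\}$, while the extra constraint $A^T\vec d + \vec v \ge 0$ is precisely $\vec z \ge 0$; optimal cells and optimal $\vec z$ correspond through $S = \support(\vec z)$, $\epsilon_i = \operatorname{sign}(z_i)$, which yields the claimed equivalences (and incidentally $\dim(\dom f) = n$, so the connection to Corollary \ref{cor:nec_suf_un} is legitimate).

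The step I expect to be the main obstacle is the first one: carefully justifying that, because $f$ is affine on $\set C$, the primal complex carries no nontrivial regular subdivision and is literally the face lattice of $\set C$, with dual cells equal to the translated normal cones; and then, in the reduction, verifying that minimizing $\dim(\partial f_{\set F})$ genuinely selects $\support(A^T\vec d + \vec v)$ rather than some larger coordinate set. The remainder is bookkeeping with Lemma \ref{lemma:properties_duality_of_complexes_2} and the normal-cone description of polyhedral cones.
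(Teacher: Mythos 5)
Your proposal is correct and follows essentially the same route as the paper's proof: choose $A$ with $\row(A)=\nullspace(B)$ and $\vec{v}$ a particular solution of $B\vec{v}=\vec{y}$, then identify $\dim(\partial f_{\set{F}})$ for the accessible cells with the support size $\|A^T\vec{d}+\vec{v}\|_0$. Your version is in fact more carefully argued than the paper's (which leaves the normal-cone computation and the identification of the minimal accessible cell with the support implicit, and has a small sign wobble in its change of variables), but the underlying reduction is identical.
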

\begin{proof}

Let $B\in \reals^{k\times n}$ and $\vec{y} \in \reals^k$. Let $\vec{v} \in \reals^n$ satisfy $B\vec{v} = \vec{y}$ and define $\vec{z} = \vec{x} - \vec{v}$, then \eqref{eq:l0_minimization} is equivalent to
\begin{equation}\label{eq:l0_minimization_translated}
    \min_{\vec{x}}\{\|\vec{x} - \vec{v}\|_0 \,|\, B\vec{x} = \vec{0}\}.
\end{equation}
Now, let $A$ be a matrix such that $\row(A) = \nullspace(B)$, then $B\vec{x} = 0$ is equivalent to $\vec{x} \in \row(A)$.
Furthermore, note that $\|\vec{x} - \vec{v}\|_0 = \dim(\partial f(\vec{y}))$ when $f(\vec{x}) = \chi_{\|\cdot\|_\infty \leq 1}(\vec{x}) - \vec{v}^T\vec{x}$ and $\vec{x} \in \partial f(\vec{y})$. Thus, for these $A$ and $f$, problem \eqref{eq:l0_minimization_translated} is equivalent to,
\begin{equation*}
    \min_{\vec{x}}\{\dim(\partial f(\vec{y}))\,|\, \vec{x} \in \partial f(\vec{y})\text{ and }\vec{x} \in \row(A)\},
\end{equation*}
which is equivalent to \eqref{eq:ill-posedness_minimization}.

The argument for nonnegative $l_0$ minimization is similar.
\hfill$\square$\end{proof}

The (nonnegative) $l_0$ minimization problem is known to be computationally difficult and is used as a motivation for some algorithms in compressed sensing \cite{foucart2013compressive}. Specifically, the difficulty is analyzed using computational complexity theory, for which we now provide a short introduction.

A combinatorial decision problem, i.e., a discrete problem with a yes or no answer, is in the complexity class NP if for every instance of the problem for which the answer is yes, there exists a proof of this answer that can be verified in polynomial computation time. Although the proof can be efficiently verified, it is not known whether for every problem in NP there exists a polynomial time algorithm that can solve the problem. A problem, not necessarily a decision problem, is called NP-hard if a polynomial time algorithm for this problem can be used to create polynomial time algorithms for every problem in NP. Therefore, NP-hard problems are considered at least as difficult as every problem in NP. The (nonnegative) $l_0$ minimization problems are examples of NP-hard problems \cite{natarajan1995sparse,nguyen2019np}, just like many other problems related to sparse estimation \cite{tillmann2013computational}, and therefore the following holds.

\begin{corollary}\label{cor:np_hard_minimization}
Solving
\begin{equation}
    \min\{\dim(\partial f_{\set{F}})\,|\, \partial f_{\set{F}} \in \complex{F}_f^\star \text{ and } \row(A) \cap \partial f_{\set{F}} \neq \emptyset\},
\end{equation}
for convex piecewise linear functions $f$ is NP-hard.
\end{corollary}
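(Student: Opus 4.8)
The plan is to derive the corollary directly from Proposition~\ref{prop:l0_minimization} together with the known NP-hardness of $l_0$ minimization. First I would recall that Proposition~\ref{prop:l0_minimization} constructs, from any instance $(B,\vec{y})$ of the $l_0$ minimization problem \eqref{eq:l0_minimization}, a matrix $A$ with $\row(A)=\nullspace(B)$ and a convex piecewise linear function $f(\vec{x})=\chi_{\|\cdot\|_\infty\le 1}(\vec{x})-\vec{v}^T\vec{x}$ (with $B\vec{v}=\vec{y}$) for which the optimal value of \eqref{eq:l0_minimization} equals the optimal value of \eqref{eq:ill-posedness_minimization}. Next I would argue that the map $(B,\vec{y})\mapsto(A,f)$, i.e.\ $(B,\vec{y})\mapsto(A,\vec{v})$, is computable in polynomial time: both a particular solution of $B\vec{v}=\vec{y}$ and a matrix $A$ whose rows span $\nullspace(B)$ are obtained by Gaussian elimination over $\mathbb{Q}$, which runs in polynomial time and keeps the bit-length of the data polynomially bounded. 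This yields a polynomial-time many-one reduction from $l_0$ minimization to the problem of evaluating \eqref{eq:ill-posedness_minimization} over convex piecewise linear $f$; since $l_0$ minimization is NP-hard \cite{natarajan1995sparse,nguyen2019np}, so is \eqref{eq:ill-posedness_minimization}. The nonnegative case is identical, using instead $f(\vec{x})=\chi_{\vec{v}-\reals^n_{\ge 0}}(\vec{x})-\vec{v}^T\vec{x}$ and the NP-hardness of nonnegative $l_0$ minimization.

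Two remarks I would include for precision. Since NP-hardness is properly a statement about decision problems, I would phrase the reduction at the level of the decision versions (``is the optimal value at most $t$?''), which is immediate because the reduction preserves optimal values exactly. I would also note that \eqref{eq:ill-posedness_minimization} is a genuine finite combinatorial problem: $\complex{F}_f$ is a finite polyhedral complex determined by $A$ and a finite description of $f$, so ``solving'' it is a well-defined computational task with input the pair $(A,f)$.

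The only real obstacle is verifying the polynomial-time bound on the reduction, and this is routine --- it amounts to checking that computing a null-space basis and a particular solution by exact rational arithmetic does not blow up the encoding size. Everything else is a direct appeal to Proposition~\ref{prop:l0_minimization} and to the literature on the hardness of sparse recovery.
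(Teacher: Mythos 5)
Your proposal is correct and follows essentially the same route as the paper, which obtains the corollary immediately from Proposition~\ref{prop:l0_minimization} combined with the known NP-hardness of (nonnegative) $l_0$ minimization \cite{natarajan1995sparse,nguyen2019np}. The additional care you take --- verifying that the map $(B,\vec{y})\mapsto(A,\vec{v})$ is polynomial-time computable via exact rational Gaussian elimination, and phrasing the reduction at the level of decision versions --- only makes explicit what the paper leaves implicit, and is a welcome tightening rather than a different argument.
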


Note that Corollary \ref{cor:np_hard_minimization} does not necessarily imply that checking well-posedness itself is NP-hard. Furthermore, the family of regularization functions used in the proof of Proposition \ref{prop:l0_minimization} are not commonly used. Next we will show that verifying ill-posedness is NP-hard for various common regularization functions, even though, due to Theorem \ref{thm:almost_surely_wellposed}, well-posedness is guaranteed almost surely. 

An example of an NP-hard problem that is also in NP, which is called an NP-complete problem, is the partition problem \cite{garey1979computers}. This problem is stated as follows: given a finite multiset of integers $\set{S}$, can $\set{S}$ be partitioned into two sets $\set{S}_1$ and $\set{S}_2$, such that the sum of the elements in $\set{S}_1$ equals the sum of the elements in $\set{S}_2$? To show that verifying ill-posedness is NP-hard, in which case verifying well-posedness is called co-NP-hard, we will show that every instance of the partition problem is an instance of checking ill-posedness.

Therefore, we will restrict ourselves to the matrices satisfying $\Rank(A) = n-1$, such that the corresponding problem is ill-posed precisely if $\row(A)$ contains a 0-dimensional point in $\complex{F}_{f}^{\star}$. For $f(\vec{x}) = \|\vec{x}\|_1$, the 0-dimensional points are precisely the points $\{-1,1\}^n$, i.e., the vertices of the hypercube $[0,1]^n$. Furthermore, $\Rank(A) = n-1$ and thus $\row(A)$ is a hyperplane through the origin, which can be characterized by an non-zero normal vector $\vec{n}$ such that $\nullspace(A) = \text{span}(\vec{n})$. Thus, the optimization problem is ill-posed if and only if $\vec{n}^T\vec{v} = 0$ for some $\vec{v} \in \{-1,1\}^n$. Equivalently, the coefficients in $\vec{n}$ can be partitioned into two sets, such that their respective sums are equal, which is partition problem if we restrict the normal vector to $\mathbb{N}^n$. Thus, we can conclude that verifying the ill-posedness of general $A$ with respect to $f(\vec{x}) = \|\vec{x}\|_1$ is NP-hard, hence verifying well-posedness is co-NP-hard. 

Next, we can extend this argument to $f(\vec{x}) = \text{TV}_G(\vec{x})$ for suitable graphs $G$, or more generally $f_n(\vec{x}) = \|D_n\vec{x}\|_1$ for suitable matrices $D_n$. Using a similar argument as above, ill-posedness is equivalent to $\vec{n}^TD_n^T\vec{v} = 0$ for $\vec{v} \in \{-1,1\}^n$. To transform an instance $\vec{d} \in \mathbb{Z}^m$ of the partition problem to checking ill-posedness with respect to $f_n$, we require a solution of $D_n\vec{n} = \vec{d}$. This can be solved efficiently if $D_n \in \reals^{(n-1)\times n}$ are finite difference matrices or $G$ is a tree graph, hence verifying ill-posedness of total variation is also NP-hard.

As a final extension, consider $f_n(\vec{x}) = \|D_n\vec{x}\|_1 + \chi_{\reals^n_{\geq 0}}(\vec{x})$. For finite difference matrices $D_n \in \reals^{(n-1)\times n}$, the 0-dimensional points of $\complex{F}_{f}^{\star}$ are the same as for $\|D_n\vec{x}\|_1$, thus the co-NP-hardness for verifying well-posedness of anisotropic total variation extends to the nonnegative constrained case. This argument does not extend to $\|\vec{x}\|_1 + \chi_{\reals^n_{\geq 0}}(\vec{x})$, which has only one 0-dimensional point in $\complex{F}^{\star}$ and well-posedness can therefore be checked efficiently if $\Rank(A) = n-1$.

The co-NP-hardness of verifying well-posedness for the three families of regularization functions discussed above is summarised in the following theorem.
\begin{theorem}\label{thm:well-posedness_conp-hard}
Let $f_n(\vec{x})$ be either $\gamma \|\vec{x}\|_1$, $\gamma \|D_n\vec{x}\|_1$ or $\gamma \|D_n\vec{x}\|_1 + \chi_{\reals_{\geq 0}^n}$ with finite difference matrix $D_{n} \in \reals^{(n-1)\times n}$ and $\gamma > 0$. Then, verifying well-posedness of $A\in \reals^{m\times n}$ with respect to $f_n(\vec{x})$ is co-NP-hard.
\end{theorem}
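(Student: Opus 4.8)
The plan is to give, in polynomial time, a many-one reduction from the \textsc{Partition} problem --- which is NP-complete \cite{garey1979computers} --- to the problem of deciding \emph{ill}-posedness; since a reduction from an NP-complete problem certifies NP-hardness, this shows that deciding ill-posedness is NP-hard, hence that deciding well-posedness is co-NP-hard. Throughout, the reduction only ever outputs operators $A$ of corank one, that is, with $\Rank(A) = n-1$, so that $\row(A)$ is a hyperplane through the origin. For such $A$, Corollary \ref{cor:nec_suf_un} together with the identity $\dim(\set{F}) + \dim(\partial f_{\set{F}}) = n$ from Lemma \ref{lemma:properties_duality_of_complexes_2} --- which gives $\codim(\partial f_{\set{F}}) = \dim(\set{F})$ --- shows that the problem is well-posed if and only if no $0$-dimensional element (vertex) of the dual complex $\complex{F}_{f_n}^\star$ lies in $\row(A)$: such a vertex is a single point $\partial f_{\set{F}}$ with $\codim(\partial f_{\set{F}}) = n > n-1 = \Rank(A)$, so it violates the criterion of Corollary \ref{cor:nec_suf_un} precisely when it is accessible, that is, contained in $\row(A)$, whereas every $\partial f_{\set{F}}$ of positive dimension automatically satisfies $\codim(\partial f_{\set{F}}) \leq n-1$. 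This uses $\dim(\dom(f_n)) = n$, which holds for all three choices of $f_n$; in particular the third regularizer has effective domain $\reals^n_{\geq 0}$, which is still $n$-dimensional.

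The next step is to determine the vertex sets of the three dual complexes. For $f_n = \gamma\|\vec{x}\|_1$, the conjugate is $\gamma\chi_{\|\cdot\|_\infty \leq 1}$, so by Proposition \ref{prop:duality_of_complexes} the dual complex is the face lattice of the box $\gamma[-1,1]^n$, whose vertices are $\gamma\{-1,1\}^n$; since $\row(A)$ is a linear subspace it contains $\gamma\vec{v}$ if and only if it contains $\vec{v}$, so $\gamma$ plays no role. For $f_n = \gamma\|D_n\vec{x}\|_1$ with $D_n$ a finite-difference matrix, $f_n$ equals $\gamma\,\text{TV}_{P}$ for the path graph $P$ on $n$ nodes, which is a tree; hence by Proposition \ref{prop:acyclic_vertex} and its corollary every orientation of $P$ is acyclic and the vertices of $\complex{F}_{f_n}^\star$ are exactly $\{\gamma D_n^T\vec{v} : \vec{v}\in\{-1,1\}^{n-1}\}$. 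For $f_n = \gamma\|D_n\vec{x}\|_1 + \chi_{\reals^n_{\geq 0}}$, Proposition \ref{prop:acyclic_vertex_NN} shows that taking the Minkowski sum with $-\reals^n_{\geq 0}$ leaves the vertex set unchanged, so it is again $\{\gamma D_n^T\vec{v} : \vec{v}\in\{-1,1\}^{n-1}\}$. Combined with the first paragraph, ill-posedness becomes the statement that $\vec{d}^T\vec{v} = 0$ for some sign vector $\vec{v}$, where $\vec{d} = \vec{n}$ in the $l_1$ case and $\vec{d} = D_n\vec{n}$ in the two difference-matrix cases, with $\vec{n}$ any vector spanning $\nullspace(A)$; when $\vec{d}$ has nonnegative integer entries this is precisely an instance of \textsc{Partition}.

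It remains to assemble the reduction. Given a \textsc{Partition} instance $\vec{a}\in\mathbb{N}^k$ (the degenerate all-zero instance is a trivial ``yes'' and is disposed of separately), in the $l_1$ case set $n = k$ and $\vec{n} = \vec{a}$; in the two difference-matrix cases set $n = k+1$ and let $\vec{n}$ be the prefix-sum vector $(0,\, a_1,\, a_1 + a_2,\, \dots,\, a_1 + \dots + a_k)$, which solves $D_n\vec{n} = \vec{a}$ and is nonzero because $\vec{a}\neq\vec{0}$ --- this prefix summation is the only nontrivial computation and is clearly polynomial. Then produce $A$ of size $(n-1)\times n$ whose rows form a basis of $\vec{n}^\perp$, for instance the integer vectors $n_j\vec{e}_i - n_i\vec{e}_j$ for $i\neq j$ with $j$ a fixed index such that $n_j\neq 0$, so that $\Rank(A) = n-1$ and $\nullspace(A) = \Span(\vec{n})$. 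By the preceding paragraphs, this $A$ is ill-posed with respect to $f_n$ if and only if $\vec{a}^T\vec{v} = 0$ for some $\vec{v}\in\{-1,1\}^k$, that is, if and only if the \textsc{Partition} instance is solvable, which completes the reduction.

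I expect the only real obstacle to be careful bookkeeping rather than any deep idea: one must verify precisely that corank-one $A$ makes well-posedness equivalent to the absence of a dual vertex in $\row(A)$ --- invoking Corollary \ref{cor:nec_suf_un}, the dimension identity of Lemma \ref{lemma:properties_duality_of_complexes_2}, and the hypothesis $\dim(\dom(f_n)) = n$ --- and one must pin down the vertex descriptions of the three dual complexes using the total-variation results of Subsection \ref{subsec:TV}. Once these are in place the combinatorics collapse immediately onto \textsc{Partition}, and there is no genuinely hard analytic step.
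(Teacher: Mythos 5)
Your proposal is correct and follows essentially the same route as the paper: a reduction from \textsc{Partition} using corank-one operators $A$, for which well-posedness fails exactly when a vertex of the dual complex lies in the hyperplane $\row(A) = \vec{n}^\perp$, with the vertex sets identified as $\{-1,1\}^n$ for the $l_1$ case and $\{D_n^T\vec{v}\}$ for the finite-difference cases (unchanged under the nonnegativity constraint by Proposition \ref{prop:acyclic_vertex_NN}), and with $D_n\vec{n}=\vec{a}$ solved by prefix sums. Your write-up is in fact somewhat more explicit than the paper's (the justification of the corank-one criterion via Corollary \ref{cor:nec_suf_un}, the integer basis for $\vec{n}^\perp$, and the degenerate instance), but there is no substantive difference in approach.
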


\section{Conclusions}
In this work, we presented a framework, based on convex and polyhedral geometry, for studying the use of convex piecewise linear functions as sparsity promoting regularization for linear regression problems. This framework allowed us to study the well-posedness of the regularized linear regression problem. Particularly, we have provided a geometric characterization of uniqueness for a large family of data fidelity terms, and obtained the same conditions for the well-posedness of regularized linear least squares.

Using these geometric conditions, we highlighted the intuitive differences between using smooth penalties based on the squared $l_2$ norm and those based on the $l_1$ norm, showing that adding or removing measurements and/or regularization terms can turn the $l_1$ norm regularized problem both well-posed and ill-posed. Luckily, well-posedness is almost surely guaranteed when the forward operator is considered random, yet we provided a few combinatorial constructions which can realistically appear in computed tomography that are ill-posed. However, in general, such constructions are computationally difficult to find, as we have shown that for some families of regularization functions, verifying well-posedness is a co-NP-hard problem.

Note that in the definition of well-posedness, we merely required the solution to depend continuously on the data. Often, stronger forms of continuity or stability are preferred, for example Lipschitz continuity, or more concrete quantities like the Lipschitz constant or the condition number of the solution map are favored. Thus, further work could focus on obtaining a deeper understanding of the stability properties of sparsity promoting regularization.

\section{Data availability}
No data has been generated or analyzed for this article.

\section{Acknowledgments}
This work was supported by the Villum Foundation (grant no.\ 25893 and VIL50096) and the Novo Nordisk Foundation (grant no.\ NNF20OC0061894).

\bibliographystyle{spmpsci}
\bibliography{references}

\end{document}